\newtheorem{lem}{Lemma}[section]
\newtheorem{thm}[lem]{Theorem}
\newtheorem{prop}[lem]{Proposition}
\newtheorem{cor}[lem]{Corollary}
\newtheorem{con}[lem]{Conjecture}
\newtheorem{remark}[lem]{Remark}
\theoremstyle{definition}
\DeclareMathOperator{\rank}{rank}
\DeclareMathOperator{\vol}{Vol}
\newcommand{\curparam}[1]{{\left\{ #1 \right\}}}
\newcommand{\Rd}{\mathbb{R}^{d}}
\newcommand{\bs}{\boldsymbol}
\newcommand{\ind}{\mathbbm{1}}
\newcommand{\R}{{\mathbb{R}}}
\newcommand{\T}{{\mathbb{T}}}
\newcommand{\param}[1]{\left( #1 \right)}
\newcommand{\E}{\mathbb{E}}
\newcommand{\cX}{{\cal{X}}}
\newcommand{\cY}{{\cal{Y}}}
\newcommand{\cC}{{\cal{C}}}
\newcommand{\cA}{{\cal{A}}}
\newcommand{\cM}{{\cal{M}}}
\newcommand{\cP}{{\cal{P}}}
\newcommand{\cV}{{\cal{V}}}
\newcommand{\cB}{{\cal{B}}}
\newcommand{\cH}{{\cal{H}}}
\newcommand{\limninf}{\lim_{n\rightarrow\infty}}
\newcommand{\prob}{\mathbb{P}}
\newcommand{\eps}{\epsilon}
\newcommand{\mean}[1]{\E\left\{ #1 \right\}}
\newcommand{\set}[1]{\left\{ #1 \right\}}
\newcommand{\splitb}{\begin{split}}
\newcommand{\splite}{\end{split}}
\newcommand{\rmax}{{r_{\mathrm{max}}}}
\newcommand{\bx}{{\bs{x}}}
\newcommand{\vsimp}{{V_{\mathrm{simp}}}}
\newcommand{\hcrit}{h_{\mathrm{crit}}}
\newcommand{\gcrit}{g_{\mathrm{crit}}}
\newcommand{\headermath}[1]{\texorpdfstring{$#1$}{TEXT}}
\newcommand{\zint}{\bs{\mathrm{z}}}
\newcommand{\cI}{\mathcal{I}}
\newcommand{\vuni}{V_{\mathrm{uni}}}
\newcommand{\vdiff}{V_{\mathrm{diff}}}
\newcommand{\ubar}[1]{\underaccent{\bar}{#1}}
\newcommand{\Ncrit}[1]{N_{#1,r}^{k}}
\newcommand{\bth}{\bs{\theta}}
\newcommand{\Gr}{\mathrm{Gr}}
\newcommand{\vint}{V_{\mathrm{int}}}
\newcommand{\abs}[1]{|#1|}
\title{Sharp Phase Transitions for $k$-Fold Coverage \\Using Morse Theory}
\author[1]{Yohai Reani\thanks{syohai@campus.technion.ac.il}}
\author[2]{Omer Bobrowski \thanks{o.bobrowski@qmul.ac.uk}}
\affil[1]{Viterbi Faculty of Electrical and Computer Engineering\\Technion - Israel Institute of Technology}
\affil[2]{School of Mathematical Sciences \\Queen Mary University of London}
\date{}
\def\rconv{r_{\mathrm{conv}}}
\begin{document}

\maketitle

\begin{abstract}
We introduce a novel approach for studying random $k$-coverage, using Morse theory for the $k$-nearest neighbor ($k$-NN) distance function. We prove a sharp phase transition for the number of critical points of the $k$-NN distance function, from which we  conclude a phase transition for $k$-coverage. In addition, in the critical window our new framework enables us to prove a Poisson process approximation (in both location and size) for the last uncovered regions.
\end{abstract}

\section{Introduction}
The problem of random $k$-fold coverage can be described as follows: Given a fixed ``big set'' $X$ and a collection of random ``small sets'', sampled from some known distribution, what conditions on this distribution guarantee that every point in $X$ is covered by at least $k$ small sets ($k\ge 1$)? 
This problem arises in many fields, with potential applications in areas such as shape reconstruction  \cite{edelsbrunner2021multi, sheehy2012multicover}, wireless communication \cite{haenggi_stochastic_2009,wang_coverage_2011}, stochastic optimization \cite{zhigljavsky_stochastic_2007}, random topology \cite{dekergorlay2022random,bobrowski_vanishing_2017,bobrowski_random_2019}, genome analysis \cite{athreya_coverage_2004}, and boundary estimation \cite{cuevas_boundary_2004}. 

In this work we focus on the case where the big set is a closed $d$-dimensional Riemannian manifold $\cM$ and the small sets are geodesic balls of the same radius, centered randomly on $\cM$. In this setting, we let $B_r(x)$ denote the open ball centered at $x\in\cM$ with radius $r$. For a finite set of points $\cP\subset\cM$,  define the $k$-coverage process  as
\begin{equation}\label{eq:occupancy}
 B_r^{(k)}(\cP) := \{x\in\cM:|B_r(x)\cap\cP|\geq k\}.    
\end{equation}
In other words, $B_r^{(k)}(\cP)$ represents the set of points in $\cM$ that are covered by at least $k$ balls. Our goal is to study $B_r^{(k)}(\cP_n)$ for the case where $\cP_n$ is a homogeneous Poisson process on $\cM$ with rate $n$ as $n\rightarrow\infty$, and $r\coloneqq r(n)\rightarrow 0$. We express our results in terms of $\Lambda\coloneqq n\omega_d r^d$ -- the expected number of points in a $d$-dimensional ball of radius $r$, where $\omega_d$ is the volume of a $d$-dimensional unit ball. Our  main result is a phase transition for the $k$-coverage event, defined as 
\begin{equation}\label{eq:cov_event}
\cC_{r}^{(k)}:=\{\cM \subset B_{r}^{(k)}(\cP_n)\}. \end{equation}
We prove the following sharp phase transition (see Theorem \ref{thm:coverage_Td}),
\begin{equation}\label{eqn:coverage_pt}
\lim_{n\rightarrow\infty}\prob(\cC_{r}^{(k)})
=
\begin{cases}
1 & \Lambda = \log n + (d+k-2)\log\log n + w(n), \\
0 & \Lambda = \log n + (d+k-2)\log\log n - w(n),
\end{cases}
\end{equation}
for any $w(n)\to \infty$. In other words, there is a sequence of thresholds, increasing in $k$ via the second order term. It is important to note here, that such a phase transition can in fact be derived  from previous work, for example \cite{flatto_random_1977}.
The main novelty of the paper is rather in presenting a completely new {\bf Morse-theoretic approach for the proofs}.
So far, the main method for proving coverage has been to divide the big space in question into small regions, and to show that each region is covered (see \cite{penrose2023random}, for example). 
The approach we propose here is fundamentally different, and in some sense much more direct.
We use \emph{Morse Theory}, to argue that in the dense regime, near the coverage threshold, there is a one-one correspondence between the maxima of the $k$-NN ($k$-nearest neighbor) distance function and the uncovered regions. In \cite{reani2024knn} we showed that the maxima (as well as other critical points) have a  simple and localized characterization in terms of the Poisson process $\cP_n$. 
This characterization enables us to prove a sharp phase transition for the number of maxima in the dense regime, which then implies the $k$-coverage result. In addition, in the critical window, where $w(n)$ is constant, our approach unlocks a novel {\bf Poisson-process approximation for the last uncovered regions}, in the $(d+1)$-dimensional space of $\text{space}\!\times\! \text{size}$ (see Theorem \ref{thm:Poisson_lim}).

In this paper, we will phrase and prove the main results for the special case where $\cM=\T^d$ is a $d$-dimensional flat torus. This case significantly simplifies the calculations required. However, the work in \cite{bobrowski_topology_2014,bobrowski_random_2019} implies that exactly the same results will hold for smooth compact manifolds, and that the only gap in the proofs are the local metric estimates which will be the same as in \cite{bobrowski_topology_2014,bobrowski_random_2019}. We therefore choose to concentrate our efforts on the more elegant ``prototype'' setting of the torus.

In the course of proving the required results for the maxima, we are able to  prove general statements for critical points of any given index $\mu$ (see Theorem \ref{thm:prob_Fkr_cov}). These results are of an intrinsic interest, and they also enable us to make a couple of topological conclusions.  The first is about the expectation of the Euler characteristic of $B_r^{(k)}(\cP_n)$. The second is about \emph{homological connectivity} -- the event where the homology of $B_r^{(k)}(\cP_n)$ ``stabilizes'', and becomes isomorphic to the homology of $\cM$ (see Section \ref{sec:prelims} for an intuitive description of homology). In \cite{bobrowski2022homological}, detailed phase transitions were proven for homological connectivity in the case $k=1$. In this paper, we show upper bounds for the homological connectivity thresholds, which are an immediate corollary of our critical points analysis. Proving sharp phase transitions for homological connectivity is beyond the scope of this paper.

\vspace{5pt}
{\noindent \bf Related work.} The problem of random coverage has been studied extensively in many different settings 
\cite{flatto_random_1977,gilbert1965probability,glaz1979multiple,hall1985distribution,hall1985macroscopic,higgsCoveringOnePoint2025,holst1980lengths,holst1981convergence,janson_random_1986,penrose2023random,penroseRandomCoverageManifold2025,penroseRateConvergenceHallJanson2025,siegel1979asymptotic}.
Closest to the setting of this paper is the seminal work of Flatto and Newman \cite{flatto_random_1977}. Fixing a small radius $r$, they focused on the distribution of the smallest number of balls required to achieve $k$-coverage of a compact Riemannian manifold. Note the difference in setting -- fixed radius and random number of points in \cite{flatto_random_1977}, compared to deterministic and growing number of points here (via $n\to\infty$), while $r(n)\to 0$. Nevertheless, 
 Theorem 1.1 in \cite{flatto_random_1977} can be used to derive \eqref{eqn:coverage_pt}, as stated earlier. However, our novel Poisson approximation result (Theorem \ref{thm:Poisson_lim}) provides a much stronger statement than the results in \cite{flatto_random_1977}.
 In \cite{janson_random_1986}, these results were extended from balls to random convex sets. Replacing the Riemannian manifold setting with a compact subset of $\R^d$, similar results were presented in \cite{hall1985distribution,janson_random_1986,penrose2023random,penroseRandomCoverageManifold2025}. 
The work of Hall \cite{hall1985distribution} goes beyond estimating coverage probabilities, aiming to characterize the shape of the uncovered region. Most recently, Penrose \cite{penrose2023random} studied $R_{n,k}$ -- the smallest radius needed for $k$-coverage with $n$ balls, providing tighter convergence rates. 

As stated earlier, the main novelty in our paper is the use of Morse Theory to address the $k$-coverage problem. Due to the nature of the critical points, they provide the most precise way to analyze the vacancy process, representing the exact location of the vacancy components before they vanish. The Morse theoretic approach to coverage ($k=1$) was presented in \cite{bobrowski2022homological}, as a direct consequence of the study of homological connectivity in random \v Cech complexes. The Morse function used there was simply the distance function. Here we show  that this approach generalizes to any $k\ge 1$, by taking the $k$-NN distance function.
We note that the applicability of the Morse theoretic approach we present here is restricted to the ball covering setting, and it remains future work to study whether similar approaches can be applied for other covering objects.

\section{Preliminaries}\label{sec:prelims}

In this section we briefly introduce the main geometric/topological objects studied in this paper. In particular, we introduce Morse theory for the $k$-NN distance function, which is the main tool we use in this paper.

\subsection{Homology} Homology is a topological-algebraic structure that describes the shape of a topological space via its connected components, holes, cavities, and their higher-dimensional generalizations. Formally, for a topological space $M$, homology is represented by a sequence of abelian groups, $H_0(M),H_1(M),\ldots$, where the zeroth group $H_0(M)$ is generated by the connected components of $M$ (also known as $0$-cycles), $H_1(M)$ by  ``holes'' ($1$-cycles), $H_2(M)$ by ``voids'' or ``cavities'' ($2$-cycles), and $H_i(M)$ by $i$-dimensional cycles (an $i$-cycle can be thought of as the boundary of a $(i+1)$-dimensional body).
The ranks of the homology groups are known as the \emph{Betti numbers}, defined as
$\beta_i(M) = \rank(H_i(M))$, see Figure  \ref{fig:hom_ex} for examples .
For formal definitions of homology and more details see \cite{hatcher_algebraic_2002}.   

\begin{figure}
    \centering
    \includegraphics[width=0.8\textwidth]{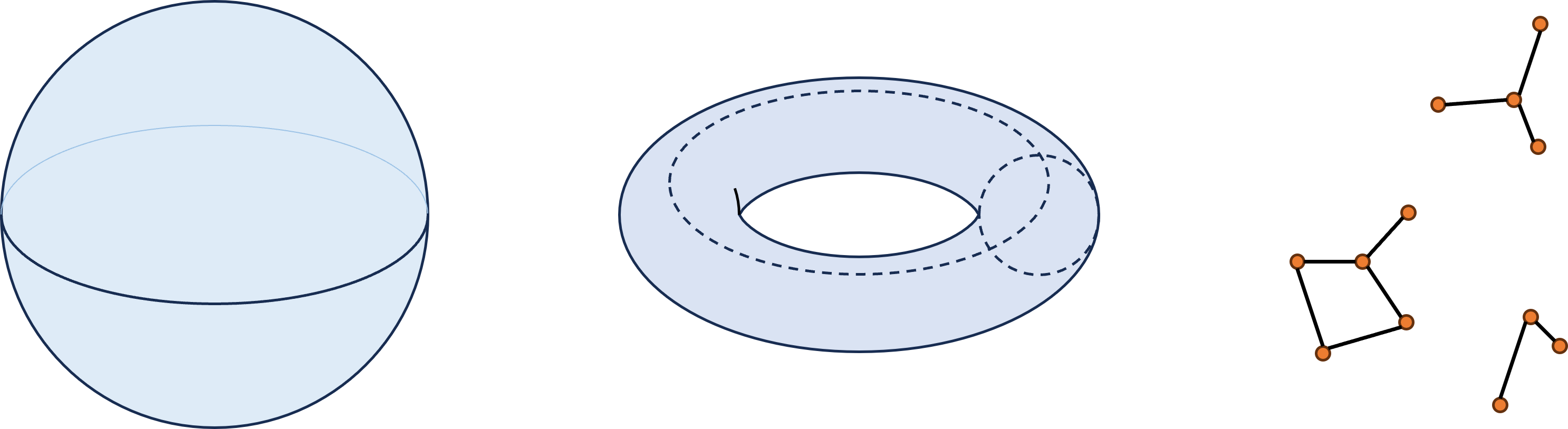}
    \caption{Homology. 
    Left: The 2-dimensional sphere $\mathbb{S}^2$ has a single connected component ($0$-cycle), enclosing an ``air pocket" ($2$-cycle). Hence, $\beta_0(\mathbb{S}^2)=\beta_2(\mathbb{S}^2)=1$, and $\beta_i(\mathbb{S}^2)=0$ for all $i\neq 0,2$. 
    Center: The torus $\T^2$ has one connected component, two independent $1$-cycles (dashed lines) and a single $2$-cycle. Hence, $\beta_0(\T^2)=1,\beta_1(\T^2)=2,\beta_2(\T^2)=1$.
    Right: A planar graph $G$ on $12$ vertices, with three connected components and a single $1$-cycle. Hence, $\beta_0(G)=3$ and $\beta_1(G)=1$.}
    \label{fig:hom_ex}
\end{figure}
\subsection{Morse theory} 
Morse theory studies the topology of manifolds via smooth functions defined on them. 
More specifically, Morse theory provides a  link between the critical points of a  function and changes in the topological structure of the underlying manifold. One of the powerful properties of Morse theory is that it enables us to draw global topological conclusions about structure, from local critical point configurations. 

Formally, let $\cM$ be a manifold. A function $f:\cM\rightarrow\R$ 
is a \emph{Morse function} if it is smooth and all its critical points are non-degenerate. Each critical point of $f$ is associated with an  \emph{index}, which is the number of negative eigenvalues of the Hessian of $f$ at the critical point. Denote by $\cM_r=f^{-1}((-\infty, r])=\{x\in \cM:f(x)\leq r\}$  the sublevel sets of $f$, and 
let $I = (a,b)$ be a non-empty interval. By Morse Theory, assuming that $(b-a)$ is sufficiently small, there are two possible cases: (1) If $I$  does not contain any critical value of $f$, then $H_i(\cM_a) \cong H_i(\cM_b)$ for all $i$ (more precisely, $\cM_a$ and $\cM_b$ are homotopy equivalent). In other words, the homology does not change as we increase the level from $a$ to $b$.
(2) If $I$ contains a single critical value associated to a critical point of index $\mu$,
then exactly one of the following holds,
\[
\beta_{\mu}(\cM_b) = \beta_{\mu}(\cM_a) + 1,
\quad\text{or}\quad
\beta_{\mu-1}(\cM_b) = \beta_{\mu-1}(\cM_a) - 1,
\]
in other words, increasing the level from $a$ to $b$, either a new $\mu$-cycle is generated, or an existing $(\mu-1)$- is terminated. For more details on Morse theory, see \cite{milnor_morse_1963}.

\subsection{The $k$-NN distance function} 
Let $\cP$ be a finite subset of $\cM$, with $|\cP|\ge k$. We define the $k$-nearest neighbor ($k$-NN) distance function $d^{(k)}_{\cP}:\cM\to \R^+$  as 
\[
d_{\cP}^{(k)}(x) := \min\set{r : |B_r(x)\cap \cP| \ge k},
\]
Our interest in $d^{(k)}_{\cP}$ comes from fact that its sub-level sets satisfy
\[(d_{\cP}^{(k)})^{-1}((-\infty, r]) = B_{r}^{(k)}(\cP),\]
where $B_{r}^{(k)}(\cP)$ is the $k$-coverage process \eqref{eq:occupancy}.
Note that $d_{\cP}^{(k)}$ is not a smooth function, and in particular its critical points cannot be defined using derivatives. Hence, classical Morse theory cannot be applied to it directly. Instead, we will use the framework developed in  \cite{reani2024knn}, which characterizes the critical points of $d_{\cP}^{(k)}$ and their effect on the homology of $B_{r}^{(k)}(\cP)$.

\subsection{Critical points of the $k$-NN distance function} In the following, we use the definitions of critical points from \cite{reani2024knn}, which were adapted from the more general framework for piecewise smooth functions developed in \cite{agrachev1997morse}. 
For simplicity, we will phrase the following definitions for the case where $\cM=\Rd$, but these can be easily adapted to the general smooth Riemannian manifolds as we discuss in Section \ref{sec:torus} (see also \cite{bobrowski_random_2019}). In addition, from now on we will assume that the points of $\cP$ are in general position, i.e., no subset of $\cP$ of size $m+1$ lies on a $(m-1)$-dimensional hyperplane ($1\leq m\leq d$). Note that for a random Poisson process, this assumption holds almost surely.
  
For each critical point of $d_{\cP}^{(k)}$ we associate a \emph{critical configuration} of points in $\cP$, as follows. Let $\cX\subset\cP$ 
where $2\leq |\cX|\leq d+1$, and denote by $S(\cX)$ the $(|\cX|-2)$-dimensional unique circumsphere of $\cX$. Denote \begin{equation}\label{eq:crit_ball}
\begin{split}
    c(\cX)&:= \text{The center of }S(\cX),
    \\
    \rho(\cX)&:=\text{The radius of }S(\cX),
    \\
    \cB(\cX)&:= \text{The }d\text{-dimensional open ball with center }c(\cX)\text{ and radius }\rho(\cX),
    \\
    \sigma(\cX)&:=\text{ the }(|\cX|-1)\text{-dimensional open simplex spanned by }\cX,
    \end{split}
\end{equation}
and
\begin{equation}\label{eq:I_and_mu}
    \begin{split}
        \cI(\cX,\cP) & := |\cB(\cX)\cap\cP|,
        \\
        \mu(\cX,\cP) & := |\cX| + \cI(\cX,\cP) - k.
    \end{split}
\end{equation}

\begin{thm} \label{thm:crit_pts}[Theorem 1 in \cite{reani2024knn}] 
\label{def:crit_pnt}
The point $c=c(\cX)$ is a critical point of $d_{\cP}^{(k)}$ of index $\mu_c=\mu(\cX,\cP)$, if and only if $c\in\sigma(\cX)$ and $k-|\cX| \le \cI(\cX,\cP) \le k-1$.
\end{thm}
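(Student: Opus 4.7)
The plan is to characterise the local form of the non-smooth function $d_{\cP}^{(k)}$ around a candidate critical point and then apply the piecewise-smooth Morse framework of \cite{agrachev1997morse}. Fix $x_0 \in \cM$, set $r_0 := d_{\cP}^{(k)}(x_0)$, let $\cX$ be the set of points of $\cP$ at distance exactly $r_0$ from $x_0$, and let $\cI$ count the points of $\cP$ strictly inside $B_{r_0}(x_0)$. General position forces $|\cX| \le d+1$. In a small neighborhood of $x_0$, the interior points of $B_{r_0}(x_0)$ remain interior and the far-away points remain far, so $d_{\cP}^{(k)}$ coincides locally with the $j$-th smallest of the smooth distance functions $\{\|x - p\|\}_{p \in \cX}$, where $j := k - \cI$. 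This local order-statistic representation is meaningful exactly when $1 \le j \le |\cX|$, which is equivalent to $k - |\cX| \le \cI \le k - 1$.

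Given this local representation, I would identify critical points via a first-order analysis. Using the expansion $\|x_0 + tv - p\| = r_0 + t\langle v, u_p\rangle + O(t^2)$ with $u_p := (x_0 - p)/r_0$, the piecewise-smooth Morse framework characterises $x_0$ as a critical point precisely when the convex hull of the active ``gradient directions'' $\{u_p\}_{p \in \cX}$ contains the origin. By a Gordan-type alternative, this is equivalent to $x_0 \in \mathrm{conv}(\cX)$. Since the points of $\cX$ already lie on a common sphere around $x_0$, general position then forces $x_0$ to equal the unique circumcentre $c(\cX)$, and the non-degeneracy of the critical point sharpens ``convex combination'' to ``strict convex combination'', giving $c(\cX) \in \sigma(\cX)$. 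This part is straightforward and essentially independent of $j$, explaining why the criticality condition in the theorem depends only on $\cX$ and not on the interior count.

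For the Morse index, observe that $\{d_{\cP}^{(k)} \le r\}$ coincides locally with the set of points lying in at least $j$ of the balls $\{B_r(p)\}_{p \in \cX}$. A nerve-type analysis of this ``$j$-fold cover'' as $r$ crosses $r_0$ shows that a cell of dimension $|\cX| - j = |\cX| + \cI - k$ is attached, producing the claimed index $\mu(\cX, \cP)$. Equivalently, one can construct a smooth local chart on $\sigma(\cX) \times \mathrm{aff}(\cX)^\perp$ in which $d_{\cP}^{(k)}$ becomes a non-degenerate quadratic form whose Hessian has $|\cX| + \cI - k$ negative eigenvalues along $\sigma(\cX)$ and positive eigenvalues along the normal directions coming from the sphere constraint.

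The principal obstacle is this index computation: the interior count $\cI$ does not appear explicitly in the local representation of $d_{\cP}^{(k)}$ as a function on $\cX$ alone, yet it must shift the Morse index by exactly $\cI$ compared with the classical Delaunay case $k=1$. The key insight is that decreasing $j$ (equivalently, increasing $\cI$) weakens the order-statistic constraint and produces exactly one additional descent direction per added interior point; getting this combinatorial bookkeeping right -- while ensuring non-degeneracy of the effective Hessian on $\sigma(\cX)$ under the general-position hypothesis -- is the technical heart of the argument.
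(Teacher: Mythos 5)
The paper does not prove this statement: it is imported verbatim as Theorem~1 of \cite{reani2024knn}, and the surrounding section only fixes the notation of \eqref{eq:crit_ball}--\eqref{eq:I_and_mu}. So there is no in-paper proof to compare yours against. Your outline is a plausible reconstruction of the kind of argument \cite{reani2024knn} must make: localise $d_{\cP}^{(k)}$ to the $j$-th order statistic of $\{\|x-p\|\}_{p\in\cX}$ with $j=k-\cI(\cX,\cP)$, characterise criticality via a convex-hull condition on the gradients $u_p$, and read off the index from the cell attached as the level crosses $\rho(\cX)$. The bookkeeping $1\le j\le|\cX|\Leftrightarrow k-|\cX|\le\cI\le k-1$, and the reduction of $0\in\mathrm{conv}\{u_p\}$ to $c(\cX)\in\mathrm{conv}(\cX)$ and thence, using equidistance and general position, to $c(\cX)\in\sigma(\cX)$, are all correct.

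Two steps in the middle are asserted rather than justified in a way that carries weight. The convex-hull criticality test is standard for min-type and max-type functions, but the $j$-th order statistic with $1<j<|\cX|$ is neither; applying it requires the continuous-selection framework of \cite{agrachev1997morse} together with a check that, at a point where all $|\cX|$ distance functions coincide, every one of them is \emph{essentially active}. You cite the framework but do not discharge this. Likewise, the alternative index argument you propose -- a chart in which $d_{\cP}^{(k)}$ becomes a nondegenerate quadratic form -- cannot be taken literally, since $d_{\cP}^{(k)}$ is only Lipschitz near $c$; what the framework actually supplies is a cell-attachment statement at nondegenerate critical points of continuous selections, and the index must be extracted as the dimension of the attached cell. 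Computing that dimension as $|\cX|-j$ for a $j$-th order statistic with $c\in\sigma(\cX)$, while the $\mathrm{aff}(\cX)^{\perp}$-directions contribute nothing because each $\|x-p\|$ increases there, and verifying nondegeneracy under the general-position hypothesis, is precisely what you flag as the ``technical heart'' -- and as written it remains a gap rather than a proof.
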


Note that Theorem \ref{thm:crit_pts} implies that $0\le \mu(\cX,\cP) \le d$.
Also note that $c\in \sigma(\cX)$ implies that $\cX$ cannot  be contained in one hemisphere of $S(\cX)$.
See Figure \ref{fig:morse_regular} for examples.

\begin{figure}
    \centering
    \includegraphics[width=\textwidth]{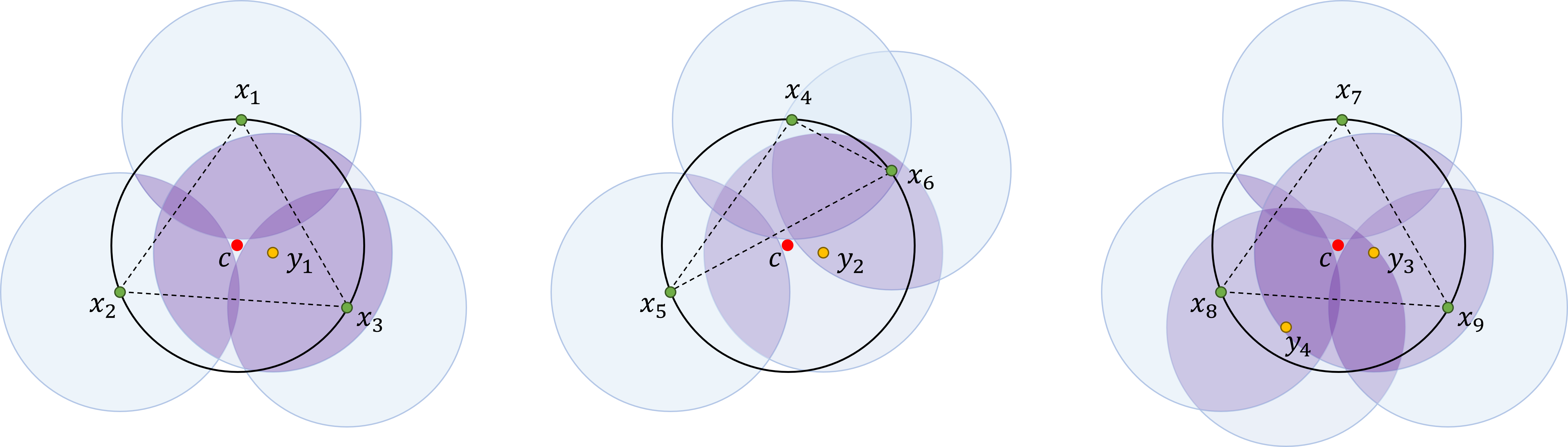}
    \caption{Critical points of $d_{\cP}^{(k)}$ in $\R^2$, for $k=2$. Left: The set $\cX_1=\{x_1,x_2,x_3\}$ induces a  critical point $c$ of index $\mu = 2$, since the interior of $\cB(\cX_1)$ contains exactly a single point $y_1$, and $\sigma(\cX_1)$ (dashed triangle) includes $c$. The shaded purple region is $B_r^{(2)}(\cP)$. Center: the set $\cX_2=\{x_4,x_5,x_6\}$ does not induce a  critical point, since $c\not\in\sigma(\cX_2)$. Right: the set $\cX_3=\{x_7,x_8,x_9\}$ does not induce a  critical point, since the interior of $\cB(\cX_3)$ includes more than one point.}
    \label{fig:morse_regular}
\end{figure}

\begin{remark}
While the critical points of $d_{\cP}^{(k)}$ can have indexes ranging from $0$ to $d$ (as in the smooth function case), when we analyze the $k$-coverage phase transition (Theorem \ref{thm:coverage_Td}), we only need to consider critical points of indexes $d$ (maxima), as explained in Section \ref{sec:coverage}.  For the Poisson approximation result (Theorem \ref{thm:Poisson_lim}) we will also need to consider critical points of index $d-1$ (see Lemma \ref{lem:oneone}).
Nevertheless, our phase transition results for critical points will be proved for all indexes.
\end{remark}

Once a critical point is reached, the homology of the sublevel sets of $d_{\cP}^{(k)}$ changes. In particular, in \cite{reani2024knn} it is shown that a critical point of index $\mu$ can induce changes (potentially more than one) in the $(\mu-1)$-th homology (by eliminating cycles) and the $\mu$-th homology (by generating cycles). The total number of changes (generations and eliminations) induced by a single critical point $c=c(\cX)$ of index $\mu_c$, denoted by $\Delta_c$, is given by
\begin{equation}\label{eq:delta_c}
    \Delta_c=\binom{|\cX|-1}{\mu_c}.
\end{equation}
For more details on the Morse Theory for the $k$-NN distance function see \cite{reani2024knn}.

\subsection{The flat torus}\label{sec:torus}
The flat torus, denoted by $\T^d$, is a compact manifold, that is homogeneous, and locally Euclidean. It is defined as the quotient $\T^d\coloneqq\R^d/\mathbb{Z}^{d}$, equipped with the toroidal metric $d_{\T}(x,y)=\min_{\delta\in\mathbb{Z}^{d}} \|x-y+\delta\|$, for all $x,y\in[0,1]^d$. Alternatively, it can be viewed as the unit box in $d$-dimensions $[0,1]^{d}$ with periodic boundary conditions.

The advantages of studying the flat torus 
are that (a) its metric is locally Euclidean, simplifying geometric arguments, while (b) there are no boundary effects to consider. As mentioned earlier, while our main results are stated for $\cM = \T^d$, similar analysis to the one in \cite{bobrowski_random_2019} can be used to generalize them to smooth Riemannian manifolds. Thus, $\T^d$ serves as a valid ``prototype manifold''.

\noindent{\bf The $k$-NN distance function on the flat torus.}
The periodicity of $\T^d$ imposes certain constraints on the assumptions on the $k$-NN distance function and the characterization of its critical points. These constraints have no impact on our results, since our analysis only considers small neighborhoods. Nevertheless, we wish to briefly discuss them here.

The radius of convexity $\rconv$ of a Riemannian manifold $\cM$ is defined as the maximal radius such that any ball of radius $r<\rconv$ in $\cM$ is convex. That is, the geodesic between any pair of points in the ball lies entirely inside the ball. 
For $\T^d$ we have $\rconv=1/4$, since any ball with radius $r \geq 1/4$ contains pairs of points whose geodesic lies partially outside the ball, rendering it non-convex.

The definition of critical points in \cite{reani2024knn} is based on the observation that $\delta_{\cP}^{(k)}\coloneqq(d_{\cP}^{(k)})^2:\Rd\rightarrow\R$ (the squared $k$-NN distance function) can be represented as a \emph{continuous selection} of the set of smooth functions $\{d_p^2(x)\coloneqq\|p-x\|^2:p\in\cP\}$, namely, at each $x\in\Rd$ there exits $p\in\cP$ such that $\delta_{\cP}^{(k)}(x)=d_p^2(x)$.
While this is sufficient in $\Rd$, a more refined treatment is required in $\T^d$. In particular, $d_p^2$ is no longer smooth due to the periodic boundary conditions. 
Nevertheless, the restriction of $\delta_{\cP}^{(k)}$ to $B_{r}^{(k)}(\cP)$, where $r<\rconv$, is a continuous selection of smooth functions, as we state in the following lemma.
\begin{lem}\label{lem:CS_in_Td}
Let $\cP\subset\T^d$ be a finite set, and denote $\rmax = \rconv$. If $r<\rmax$, then, the restriction $\delta_{\cP}^{(k)}:B_{r}^{(k)}(\cP)\rightarrow\R$ is a continuous selection of smooth functions.
\end{lem}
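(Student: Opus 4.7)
The plan is to show that at each $x\in B_r^{(k)}(\cP)$, the value $\delta_\cP^{(k)}(x)$ is realized by one of finitely many smooth functions, each of which is smooth on an open neighborhood of $x$. The crucial ingredient is the hypothesis $r<\rconv$, which forces the $k$-th nearest neighbor of any $x\in B_r^{(k)}(\cP)$ to lie within the radius-of-convexity ball around $x$, where the squared toroidal distance behaves like a lifted Euclidean squared distance.

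First, I would recall that on $\T^d=\R^d/\Z^d$ the squared toroidal distance admits the representation $d_{\T}(y,p)^2=\min_{\eta\in\Z^d}\|y-p+\eta\|^2$. For $y$ in the open ball $B_{\rconv}(p)$, the minimizing $\eta=\eta(y)$ is unique and locally constant, so $\varphi_p(y):=\|y-p+\eta(y)\|^2$ is a smooth function on $B_{\rconv}(p)$ that agrees with $d_{\T}(y,p)^2$ there. Hence the family $\{\varphi_p : p\in\cP\}$, with each $\varphi_p$ smooth on $B_{\rconv}(p)$, is the natural candidate family of smooth functions for the continuous-selection representation.

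Next, I would verify the selection property at an arbitrary $x\in B_r^{(k)}(\cP)$. By definition $|B_r(x)\cap\cP|\ge k$, so the $k$-th smallest value in $\{d_{\T}(x,p):p\in\cP\}$ is at most $r$. Picking any $p^\star\in\cP$ realizing this value, we have $d_{\T}(x,p^\star)\le r<\rconv$, so $x\in B_{\rconv}(p^\star)$ and $\varphi_{p^\star}$ is smooth on an open neighborhood of $x$ with $\varphi_{p^\star}(x)=\delta_\cP^{(k)}(x)$. Since $B_r^{(k)}(\cP)$ is open, one can in fact shrink the neighborhood so that this remains valid throughout, exhibiting the required local smooth representative in the sense of continuous selection of \cite{agrachev1997morse}.

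The main technical care will be in handling the periodicity cleanly: the globally defined function $d_{\T}(\cdot,p)^2$ fails to be smooth exactly on the cut locus of $p$, which lies on the sphere of radius $\rconv=1/4$ around $p$. Restricting each $\varphi_p$ to $B_{\rconv}(p)$ removes this obstruction entirely. The only remaining point is the continuity of $\delta_\cP^{(k)}$ itself, which follows because $\delta_\cP^{(k)}$ is the $k$-th order statistic of the finite collection of continuous functions $\{d_{\T}(\cdot,p)^2\}_{p\in\cP}$, and so is continuous even when ties among $k$-th nearest neighbors occur. Combined with the pointwise smooth representatives above, this gives the continuous-selection conclusion of the lemma.
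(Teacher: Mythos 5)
Your argument correctly identifies the same key insight the paper builds on: for $x\in B_r^{(k)}(\cP)$ the $k$-th nearest neighbor $p^\star$ satisfies $d_\T(x,p^\star)<r<\rconv$, so the active squared-distance function is smooth near $x$. However, there is a gap in the final step. The framework of \cite{agrachev1997morse} requires a \emph{continuous selection from a fixed finite family of functions, each smooth on the entire domain}: one needs globally defined smooth $f_1,\ldots,f_n: B_r^{(k)}(\cP)\to\R$ with $\delta_{\cP}^{(k)}(x)\in\{f_1(x),\ldots,f_n(x)\}$ for all $x$. Your candidate functions $\varphi_p$ are only defined and smooth on $B_{\rconv}(p)$, and $B_r^{(k)}(\cP)$ is in general not contained in any single such ball, so ``exhibiting a local smooth representative at each $x$'' does not by itself produce the required global family.

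The missing ingredient is the complementary observation, which the paper states explicitly: for $x\in B_r^{(k)}(\cP)\setminus B_r(p)$ one has $d_p^2(x)\ge r^2 > \delta_\cP^{(k)}(x)$, so $d_p^2$ is never the selected (active) function outside $B_r(p)$. This is what licenses replacing each $d_p^2$ by a globally smooth $\tilde{d}_p^2$ that agrees with $d_p^2$ on $B_{\rmax}(p)$ (and stays at least $r^2$ elsewhere, so the replacement cannot become newly active on $B_r^{(k)}(\cP)$). The resulting family $\{\tilde{d}_p^2\}_{p\in\cP}$ is then a genuine finite family of globally smooth functions of which $\delta_{\cP}^{(k)}|_{B_r^{(k)}(\cP)}$ is a continuous selection. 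Without this replacement step your proposal stops short of what the lemma asserts.
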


\begin{proof}
Note that the restriction of $d_{p}^2$ to $B_r(p)$, with $r<\rmax$, is smooth. However, this smoothness does not generally extend to the entire region $B_r^{(k)}(\cP)$. 
Nevertheless, for $x\in B_{r}^{(k)}(\cP){\setminus}B_{r}(p)$, we have $d_{p}^2(x)>\delta_{\cP}^{(k)}(x)$, which implies that $\delta_{\cP}^{(k)}=\delta_{\cP{\setminus}\{p\}}^{(k)}$ in this region. In other words, $d_p^2$ has no effect outside $B_r(p)$. 
This allows us to replace each $d_{p}^2$ with a smooth alternative function $\tilde{d}^2_{p}(x)$, such that $\tilde{d}^2_{p}(x)=d_{p}^2(x)$ for all $x\in B_{\rmax}(p)$. Using this alternative representation does not change the values of $\delta_{\cP}^{(k)}$, and shows that it is indeed a continuous selection.
\end{proof}

Another challenge arises in the definition of critical points in $\T^d$. Let $\cX\subset\cP$ of size $m+1$, where $1\leq m \leq d$. While for $\Rd$, the $(m-1)$-circumsphere $S(\cX)$ is uniquely defined, in $\T^d$ this is not the case due to its periodicity. Therefore, we define $S(\cX)$ here as the smallest circumsphere of $\cX$. The restriction 
$r<\rmax$ guarantees that we only consider sets $\cX$ with diameter smaller than $2\rmax$, in which case the sphere $S(\cX)$ is unique.
Note that the entire analysis in this paper deals with radii much smaller than $\rmax$ ($r=r(n)\to 0$), and therefore the restricting $r < \rmax$ have no effect on our results.

\subsection{Notations} We use the following asymptotic notations throughout the paper. We use $a_n\approx b_n$ to denote  $\lim_{n\rightarrow\infty}\frac{a_n}{b_n}=1$, and $a_n\sim b_n$ to denote $\lim_{n\rightarrow\infty}\frac{a_n}{b_n}=C$, where $C<\infty$ is a nonzero constant that does not depend on $n$. 

\section{Main results}\label{sec:main_results}
In this section, we present the main contributions of this paper. These include:
(a) A sharp phase transition for $k$-coverage; (b) A Poisson limit for last vacant regions;  (c) Phase transitions for the critical points of $d_{\cP}^{(k)}$ of varying indexes; (d) Topological conclusions.

\subsection{Phase transition for $k$-coverage}
Let $\cP_n$ be a  homogeneous Poisson point process in $\cM = \T^d$.
We consider the random $k$-coverage process induced by $B_{r}^{(k)}(\cP_n)$ \eqref{eq:occupancy}. To simplify notation, from now on we use $B_r^{(k)}\coloneqq B_{r}^{(k)}(\cP_n)$.
Recall that we focus on the asymptotic regime where $n\rightarrow\infty$ and $r=r(n)\rightarrow 0$, and we express our results in terms of $\Lambda=n\omega_d r^d$.
Our first result is a phase transition for $k$-coverage event $\cC_{r}^{(k)}:=\{ \T^d\subset B_{r}^{(k)} \}$.

\begin{thm}\label{thm:coverage_Td}
Let $k\geq 1$, and suppose that $w(n)\rightarrow\infty$ as $n\rightarrow\infty$. Then,
\begin{equation*}
\lim_{n\rightarrow\infty}\prob(\cC_{r}^{(k)})
=
\begin{cases}
1 & \Lambda = \log n + (d+k-2)\log\log n + w(n), \\
0 & \Lambda = \log n + (d+k-2)\log\log n - w(n).
\end{cases}
\end{equation*}
\end{thm}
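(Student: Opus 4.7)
The strategy is to translate the $k$-coverage event into a counting problem for high-level critical points of $d_{\cP_n}^{(k)}$, and then apply a first-moment bound in one direction and a Chen--Stein Poisson approximation in the other. By Lemma \ref{lem:CS_in_Td} together with the identity $(d_{\cP_n}^{(k)})^{-1}((-\infty,r])=B_r^{(k)}$, the event $\cC_r^{(k)}$ fails iff $\max_{\T^d}d_{\cP_n}^{(k)}>r$. Since $\T^d$ is compact, every connected component of $\T^d\setminus B_r^{(k)}$ contains a local maximum of $d_{\cP_n}^{(k)}$, i.e.\ a critical point of index $d$ (the ambient dimension) with critical value strictly greater than $r$. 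Writing
\begin{equation*}
N_{d,r}:=\#\{\text{index-}d\text{ critical points of }d_{\cP_n}^{(k)}\text{ with critical value }>r\},
\end{equation*}
the theorem thus reduces to showing $\prob(N_{d,r}=0)\to 1$ when $w(n)\to+\infty$ and $\prob(N_{d,r}=0)\to 0$ when $w(n)\to-\infty$.

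To compute $\E[N_{d,r}]$, I apply Theorem \ref{thm:crit_pts} with $\mu_c=d$. Together with $|\cX|\le d+1$ and $\cI(\cX,\cP_n)\le k-1$, this forces $|\cX|=d+1$ and $\cI(\cX,\cP_n)=k-1$, so an index-$d$ critical point is a $(d+1)$-subset $\cX\subset\cP_n$ with $c(\cX)\in\sigma(\cX)$ and exactly $k-1$ points of $\cP_n\setminus\cX$ inside $\cB(\cX)$. The Mecke formula yields
\begin{equation*}
\E[N_{d,r}]=\frac{n^{d+1}}{(d+1)!}\int\ind_{c(\cX)\in\sigma(\cX)}\ind_{\rho(\cX)>r}\cdot\frac{(n\omega_d\rho(\cX)^d)^{k-1}}{(k-1)!}e^{-n\omega_d\rho(\cX)^d}\,d\bs{x},
\end{equation*}
where $\bs{x}=(x_1,\dots,x_{d+1})$. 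The Blaschke--Petkantschin-type change of variables $\bs{x}\mapsto(c,\rho,\bth)$ with $x_i=c+\rho\theta_i$ has Jacobian $\rho^{(d-1)(d+1)}$; integrating out $c\in\T^d$ (volume $1$) and the angular variables (a finite constant $D_{d,k}$ absorbing the $c(\cX)\in\sigma(\cX)$ indicator) and substituting $t=n\omega_d\rho^d$ reduces this to $\E[N_{d,r}]=D_{d,k}\,n\int_\Lambda^\infty t^{d+k-2}e^{-t}\,dt$. The standard asymptotic $\int_\Lambda^\infty t^{d+k-2}e^{-t}\,dt\sim\Lambda^{d+k-2}e^{-\Lambda}$, together with $\Lambda=\log n+(d+k-2)\logg n+w(n)$, gives the clean cancellation $\E[N_{d,r}]\sim D_{d,k}\,e^{-w(n)}$, which pins down $(d+k-2)\logg n$ as the correct second-order term.

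For $w(n)\to+\infty$, Markov's inequality now gives $\prob(\neg\cC_r^{(k)})\le\E[N_{d,r}]\to 0$. For $w(n)\to-\infty$ we have $\E[N_{d,r}]\to\infty$, and I would deduce $\prob(N_{d,r}=0)\to 0$ via a Chen--Stein Poisson approximation to $N_{d,r}$. The main obstacle is bounding the second factorial moment: the contributions to $\E[N_{d,r}(N_{d,r}-1)]$ coming from pairs of $(d+1)$-configurations that share vertices or whose circumcenters lie within $O(r)$ of each other have inflated combinatorial weight, but the Poisson exclusion factor for such a pair is $e^{-n\omega_d|\cB_1\cup\cB_2|}$ involving the \emph{union} of the two balls, which is strictly larger than the volume of a single ball. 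A quantitative geometric lower bound on $|\cB_1\cup\cB_2|$ in terms of the inter-center distance (in the spirit of the estimates in \cite{bobrowski_random_2019}) gives just enough exponential suppression to beat the combinatorial gain, showing that close-pair contributions are $o(\E[N_{d,r}]^2)$. The resulting factorization $\E[N_{d,r}(N_{d,r}-1)]=(1+o(1))\E[N_{d,r}]^2$ then feeds into a Chen--Stein bound, yielding $\prob(N_{d,r}=0)=(1+o(1))e^{-\E[N_{d,r}]}\to 0$; this Poisson-approximation step is in fact subsumed by Theorem \ref{thm:Poisson_lim}.
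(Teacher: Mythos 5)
Your proposal follows essentially the same route as the paper. The skeleton is identical: (i) reduce $\cC_r^{(k)}$ to the event $N_{k,r}^d = 0$ (count of index-$d$ critical points with critical value $>r$) via Morse theory and compactness; (ii) compute $\E[N_{k,r}^d]$ by Mecke's formula and the Blaschke--Petkantschin change of variables, landing on $\E[N_{k,r}^d]\sim C e^{-w(n)}$; (iii) use Markov for the supercritical case $w(n)\to+\infty$; (iv) use a second-moment argument for the subcritical case. The only surface-level difference is that you propose Chen--Stein Poisson approximation in step (iv), while the paper uses Chebyshev's inequality --- but the two are interchangeable here, since $\mathrm{Var}(N)\approx\E[N]$ is equivalent to $\E[N(N-1)]\approx \E[N]^2$, and the underlying geometric estimate needed is the same.

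The real gap is that step (iv) is only sketched, and your sketch underestimates its difficulty for $k\ge 2$. You describe the interaction term as a single Poisson \emph{exclusion} factor $e^{-n\omega_d |\cB_1\cup\cB_2|}$, as in the $k=1$ case of \cite{bobrowski_random_2019}. For $k>1$ this is not the right structure: a pair of critical configurations $(\cX_1,\cX_2)$ does not have empty critical balls --- each $\cB(\cX_i)$ must contain exactly $k-1$ points of $\cP_n$, and moreover points of $\cX_1$ may fall inside $\cB(\cX_2)$ and vice versa. The joint probability therefore requires decomposing by how the $k-1$ interior points are distributed between $\cB_1\setminus\cB_2$, $\cB_2\setminus\cB_1$ and $\cB_1\cap\cB_2$, and separately accounting for shared vertices (the $j$-overlap cases). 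This is precisely the long analysis in Section~\ref{sec:var_proof} of the paper (the $I_j^{(1)}, I_j^{(2)}, I_j^{(3)}$ split and the bookkeeping in $k_1,k_2,k_{12},m_1,m_2$), and the paper's own remark makes explicit that the $k=1$ simplification does not carry over. Your closing appeal --- that the step ``is in fact subsumed by Theorem~\ref{thm:Poisson_lim}'' --- is circular, since the proof of that theorem rests on the very variance estimate in question. So the high-level plan is right, but the one genuine piece of new work in the proof --- the multi-configuration second-moment bound for $k>1$ --- would need to be carried out in full to complete the argument.
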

Note that the thresholds have the same leading term, i.e., they occur almost simultaneously, and they differ only in the second order term. As expected, the thresholds appear in an ascending order in $k$ (since $(k+1)$-coverage implies $k$-coverage).
In addition, note that $k=1$ recovers simple coverage, i.e.~the union of balls covers $\T^d$ (cf. Theorem 3.1 in \cite{bobrowski2022homological}). 
As stated earlier, these phase transitions can be derived from previous results, for example Theorem 1.1 in \cite{flatto_random_1977}. The main novelty here is our Morse theoretic proof for this phenomenon.

\subsection{A Poisson-process limit in the critical window}

Here, we consider the critical window
\begin{equation}\label{eq:crit_win}
    \Lambda \coloneqq n\omega_d r_0^d=\log n +(d+k-2)\log\log n +\lambda_0,
\end{equation}
where $\lambda_0\in\R$ is fixed. 
Here, prior to coverage, we are interested in the 
distribution of the last uncovered regions. Define the \emph{$k$-vacancy process} as the complement of the $k$-coverage process, i.e., 
\begin{equation}\label{eq:Vacancy}
\cV_r^{(k)} = \cV_{r}^{(k)}(\cP_n)\coloneqq \T^d \backslash B_r^{(k)}(\cP_n).
\end{equation}
Our goal is to study the distribution of the last connected components of $\cV_{r}^{(k)}$ that ``survive'' into the critical window, before they vanish.
We will show in Lemma \ref{lem:oneone}
that there is one-to-one correspondence between these components and the maxima of the $k$-NN distance function $d_{\cP_n}^{(k)}$. Thus, we
can associate to every vacant component a pair of values $(c,\rho)$, that are the critical point and critical value. These correspond to the location of the vacant component right before it vanishes and the radius at which it does. Also note that if $r < \rho$ (but still within the critical window), then the vacant component in $\cV_{r}^{(k)}$ is bounded by the ball $B_{\rho}(c)
$.

Let $\cC^d = \cC^d(\cP_n)$ be the set of all critical points of index $d$ (maxima) generated by $\cP_n$. For a critical point $c\in \cC^d$ denote its critical value by $\rho_c\coloneqq d_{\cP}^{(k)}(c)$, and set
\begin{equation}\label{eq:lambda_c}
\lambda_c \coloneqq n\omega_d \rho_c^d-\log n -(d+k-2)\log\log n.
\end{equation}
Next, define a point process on $\T^d\times\R$ by 
\begin{equation}\label{eq:xi_k}
    \xi_k = \xi_k[\cP_n] \coloneqq \sum_{c\in\cC^d}
    \ind\{\rho_c\in(r_0,\sqrt{r_0}\hspace{2pt} ]\}\delta_{(c,\lambda_c)},
\end{equation}
where $\delta_{x}$ is the Dirac delta measure, and $r_0$ is defined via \eqref{eq:crit_win}. 
Note that $\xi_k$ is a map from point processes in $\T^d$, to point processes (measures) in $\mathbb{Y}\coloneqq \T^d\times \R_0$, where $\R_0=[\lambda_0,\infty)$. We will think of this process as representing the last vacant connected components. The choice of the interval $(r_0,\sqrt{r_0}]$ is for technical reasons in the proofs. However, since $\sqrt{r_0} \gg r_0$, we are in practice considering all relevant critical points.

The following result states that $\xi_k$ converges to a Poisson process in $\mathbb{Y}$ under the Kantorovich-Rubinstein (KR) distance,  defined as 
\[
d_{\mathrm{KR}}(\xi, \zeta) = \sup_{h \in \mathrm{LIP}(\mathbb{Y})} \left| \E(h(\xi)) - \E(h(\zeta)) \right|,
\]
where $\mathrm{LIP}(\mathbb{Y})$ is the class of measurable 1-Lipschitz functions on $\mathbb{Y}$.
We note that the KR distance upper bounds the total variation (TV) distance.

\begin{thm}\label{thm:Poisson_lim}
Let $\lambda_0\in\R$, and $\xi_k$ as defined above. Then, for $n\geq 3$, we have
\[
d_{\mathrm{KR}}(\xi_k,\zeta_k)\leq C_{\lambda_0}(\log\log n)^{d+k}(\log n)^{-\frac{1}{2(d+k)}},
\]
for some $C_{\lambda_0}>0$, where $\zeta_k$ is a Poisson process on $\mathbb{Y}=\T^d\times\R_0$ with intensity $C_d e^{-\lambda}d\lambda dc$, and where $C_d>0$ is a constant defined in \eqref{eq:C_d}.

In particular, this implies $\xi_k\xrightarrow[]{\mathrm{KR}}\zeta_k$ as $n\rightarrow\infty$.
\end{thm}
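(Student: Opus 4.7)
The plan is to represent $\xi_k$ as a sum over $(d+1)$-subsets of the Poisson process and apply a Poisson process approximation theorem in the KR metric, of the type that bounds the distance by a first-order ``intensity mismatch'' term plus second-order ``interaction'' terms. By Theorem \ref{thm:crit_pts}, each critical point of index $d$ corresponds to a unique $\cX\subset\cP_n$ with $|\cX|=d+1$, $c(\cX)\in\sigma(\cX)$, and $\cI(\cX,\cP_n)=k-1$, so
\[
\xi_k=\sum_{\cX\subset\cP_n,\,|\cX|=d+1}\ind\{c(\cX)\in\sigma(\cX)\}\ind\{\cI(\cX,\cP_n)=k-1\}\ind\{\rho(\cX)\in(r_0,\sqrt{r_0}\,]\}\,\delta_{(c(\cX),\lambda_{c(\cX)})}.
\]
This puts $\xi_k$ in the form of a Poisson $U$-statistic--type point process, to which a Malliavin--Stein / Palm-coupling bound (e.g.\ of Decreusefond--Schulte--Th\"ale--Barbour--Brown type) can be applied.

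First, I would verify intensity convergence via the multivariate Slivnyak--Mecke formula. The Poisson count $\cI(\cX,\cP_n)$ is independent of the configuration $\cX$ once we condition on $\cX\subset\cP_n$, and its conditional distribution is $\mathrm{Poisson}(n\omega_d\rho(\cX)^d)$. Substituting $n\omega_d\rho^d=\log n+(d+k-2)\log\log n+\lambda$ gives
\[
\prob(\cI(\cX,\cP_n)=k-1)\approx\frac{(\log n)^{k-1}}{(k-1)!}\cdot\frac{e^{-\lambda}}{n(\log n)^{d+k-2}}=\frac{e^{-\lambda}}{(k-1)!\,n\,(\log n)^{d-1}}.
\]
Changing variables from $(x_0,\dots,x_d)\in(\T^d)^{d+1}$ to $(c,\rho,\theta_0,\dots,\theta_d)\in\T^d\times\R^+\times(S^{d-1})^{d+1}$ with Jacobian proportional to $\rho^{d(d+1)-1}$, and then from $\rho$ to $\lambda$ via $d\lambda=n\omega_d d\,\rho^{d-1}d\rho$, collapses the prefactors against $n^{d+1}$ and isolates a purely geometric angular integral whose integrand is $\ind\{c\in\mathrm{conv}(\theta_0,\dots,\theta_d)\}$. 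That integral produces exactly the constant $C_d$ of \eqref{eq:C_d}, so $\E[\xi_k(\cdot)]\to C_d e^{-\lambda}d\lambda\,dc$ on $\mathbb{Y}$, with the upper cutoff $\rho\le\sqrt{r_0}$ (equivalently $\lambda\le O((\log n)^d)$) contributing a super-exponentially small error.

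Second, I would bound the pair interactions. Applying the order-$2(d+1)$ Mecke formula to $\E[\xi_k\otimes\xi_k]$ and partitioning pairs $(\cX_1,\cX_2)$ by $j=|\cX_1\cap\cX_2|\in\{0,\dots,d\}$, each joint expectation factors into a geometric integral times a Poisson-counting factor of the form $\exp(-n\omega_d\,\vol(\cB(\cX_1)\cup\cB(\cX_2))/\omega_d\rho^d)$ times moments of the counts inside each ball. For $j\ge1$ an explicit factor $n^{-j}$ is gained relative to the $j=0$ case. For $j=0$ one splits the $c_2$-integral at a threshold $d_\T(c_1,c_2)\le tr$, bounding the ``close'' part by a volume integral of order $(tr)^d$ times the squared intensity density, and bounding the ``far'' part by the approximate independence of Poisson points in disjoint balls. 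Choosing $t$ to balance the two regimes against the $(k-1)$-moment factor $(\log n)^{k-1}$ and the mass of the critical window is where the exponent $-1/(2(d+k))$ emerges.

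The main obstacle will be the precise bookkeeping of logarithmic factors through these second-moment estimates: each intersection type $j$, and each combination of Poisson counts in $\cB(\cX_1)\setminus\cB(\cX_2)$, $\cB(\cX_2)\setminus\cB(\cX_1)$, and $\cB(\cX_1)\cap\cB(\cX_2)$, has a different scaling in $n$, $\log n$ and $\log\log n$, and all must be shown to be dominated by the single expression $C_{\lambda_0}(\log\log n)^{d+k}(\log n)^{-1/(2(d+k))}$. The $(\log\log n)^{d+k}$ factor will come from the accumulated factorial moments of Poisson counts of mean $\sim\log n$ on the critical window together with the truncation at $\rho\le\sqrt{r_0}$, while the $(\log n)^{-1/(2(d+k))}$ factor is the optimized trade-off described above. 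The KR metric is the right metric here because it tolerates the small scale- and location-perturbations arising from coupling $\xi_k$ with a Poisson process whose intensity has been matched only asymptotically.
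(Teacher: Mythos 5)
Your overall plan matches the paper's proof: both express $\xi_k$ as a $U$-statistic over $(d+1)$-tuples from $\cP_n$, apply a Kantorovich--Rubinstein Poisson-process--approximation theorem (the paper invokes Theorem~4.1 of \cite{bobrowski2022PoissonProcessApprox}) whose bound decomposes into an intensity-mismatch term plus second-order interaction terms, verify intensity convergence via Mecke plus Blaschke--Petkantschin, and control pair interactions by partitioning over $j=|\cX_1\cap\cX_2|$.

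However, you misattribute the origin of the rate $(\log n)^{-1/(2(d+k))}$. You locate it in the $j=0$ case, claiming that optimizing a split of the $c_2$-integral at a threshold $tr$ produces this exponent. In fact the $j=0$ contribution (after subtracting $\E\{\Ncrit{d}\}^2$) is $O((\log\log n)^{d+k+1}(\log n)^{-1})$, which decays strictly faster than the target. The dominant contribution, and the place where $(\log n)^{-1/(2(d+k))}$ actually arises, is the overlap term $j=1$: the proof of Proposition~\ref{prop:gen_exp_var_Fkr_gen} splits each $I_j$ (for $1\le j\le d$) into three regions $I_j^{(1)},I_j^{(2)},I_j^{(3)}$ governed by two tuning parameters $\epsilon_j,\delta_j$, and the choices $\epsilon_j=\Lambda^{-(d+1/2-j+k)/(d+1-j+k)}$, $\delta_j\sim(\log\Lambda)/\Lambda$ yield $I_j/\E\{\Ncrit{d}\}=O\bigl((\log\log n)^{d-j+k+1}(\log n)^{-\frac{1/2}{d+1-j+k}}\bigr)$; the worst case $j=1$ gives the stated rate. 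So the critical balancing is not a near/far split in the disjoint case but a three-way split within the overlapping case, with the ``close but not too close'' region $I_j^{(2)}$ being the genuinely hard one. The $k>1$ setting introduces substantial new difficulties there because $\cB(\cX_1)\cap\cX_2$ and $\cB(\cX_2)\cap\cX_1$ are generally nonempty, so the counts $m_1,m_2,k_1,k_2,k_{12}$ must all be tracked and several subcases distinguished (notably $m_1=0$ versus $m_1\ge1$). This variance estimate is most of the paper's work and your proposal both misplaces where the rate is won and understates the difficulty.

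A smaller point: your statement that $\cI(\cX,\cP_n)$ ``is independent of the configuration $\cX$ once we condition on $\cX\subset\cP_n$'' is only correct under the Mecke/Palm formalism, where one replaces $\cX\subset\cP_n$ by a fixed $\cX'$ of i.i.d.\ points independent of $\cP_n$ and counts points of $\cP_n$ in $\cB(\cX')$; conditioning a Poisson process on containing specified points is not an operation one can perform directly.
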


Let $V_{k,r}$ denote the number of vacant connected components at radius $r$.
The Poisson limit in Theorem \ref{thm:Poisson_lim} implies the following.

\begin{cor}
    Let $\Lambda = \log n + (d+k-2)\log\log n + \lambda_0$, for some $\lambda_0\in \R$. Then,
    \[
        V_{k,r} \xrightarrow[]{\mathrm{TV}} \mathrm{Poisson}(C_d e^{-\lambda_0}),
    \]
    which, in particular, implies that
    \[
        \limninf\prob(\cC_r^{(k)}) = e^{-C_de^{-\lambda_0}}.
    \]
\end{cor}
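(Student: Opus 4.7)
The plan is to deduce the corollary from the point-process convergence in Theorem~\ref{thm:Poisson_lim} by identifying $V_{k,r_0}$ with the total count $\xi_k(\T^d\times[\lambda_0,\infty))$, up to a negligible tail. First I would invoke Lemma~\ref{lem:oneone} (the one-to-one correspondence between the connected components of $\cV_{r_0}^{(k)}$ and the index-$d$ critical points of $d_{\cP_n}^{(k)}$ whose critical value exceeds $r_0$). Since $\lambda_c$ is strictly increasing in $\rho_c$ by \eqref{eq:lambda_c}, the event $\rho_c\ge r_0$ is equivalent to $\lambda_c\ge\lambda_0$, and because $\xi_k$ restricts to $\rho_c\in(r_0,\sqrt{r_0}\hspace{1pt}]$, this yields the decomposition
\[
V_{k,r_0}\;=\;\xi_k\bigl(\T^d\times[\lambda_0,\infty)\bigr)\;+\;N_n,\qquad N_n:=\abs{\set{c\in\cC^d:\rho_c>\sqrt{r_0}}}.
\]

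The next step is to show $N_n\xrightarrow{\prob}0$. By Theorem~\ref{thm:crit_pts}, every index-$d$ critical point with critical radius $\rho$ corresponds to a $(d{+}1)$-subset $\cX\subset\cP_n$ whose open circumball $\cB(\cX)$ of radius $\rho$ contains exactly $k-1$ points of $\cP_n$ and whose circumcenter lies in $\sigma(\cX)$. A Palm/Mecke computation followed by a change of variables to the circumradius gives a bound of the form
\[
\E[N_n]\;\le\;C_k\,n^{d+1}\!\int_{\sqrt{r_0}}^{\rmax}\!\!\rho^{d^2-1}(n\omega_d\rho^d)^{k-1}e^{-n\omega_d\rho^d}\,d\rho.
\]
Since $n\omega_d(\sqrt{r_0})^d\sim\sqrt{n\log n}$ grows much faster than $\log n$, the exponential damping overwhelms the polynomial prefactor, so $\E[N_n]\to 0$ and Markov's inequality gives $\prob(N_n\ge 1)\to 0$.

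The third step is to translate the KR convergence into TV convergence on the count. Because the evaluation map $\eta\mapsto\eta(A)$ is $1$-Lipschitz on finite counting measures for the total-variation metric, and because the KR distance dominates TV (as noted just before Theorem~\ref{thm:Poisson_lim}), applying this with $A=\T^d\times[\lambda_0,\infty)$ yields
\[
d_{\mathrm{TV}}\bigl(\xi_k(A),\zeta_k(A)\bigr)\;\le\;d_{\mathrm{KR}}(\xi_k,\zeta_k)\;\longrightarrow\;0.
\]
The limit $\zeta_k(A)$ is Poisson with mean $\int_{\T^d}\!\int_{\lambda_0}^{\infty} C_d e^{-\lambda}\,d\lambda\,dc=C_d e^{-\lambda_0}$, since $\vol(\T^d)=1$. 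Combining the two steps through the triangle inequality in TV, using $d_{\mathrm{TV}}(V_{k,r_0},\xi_k(A))\le\prob(N_n\ge 1)\to 0$, gives $V_{k,r_0}\xrightarrow{\mathrm{TV}}\mathrm{Poisson}(C_d e^{-\lambda_0})$. Since $\cC_{r_0}^{(k)}=\set{V_{k,r_0}=0}$ is a single atom, TV convergence of integer-valued random variables then forces $\limninf\prob(\cC_{r_0}^{(k)})=e^{-C_d e^{-\lambda_0}}$.

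I expect the only nontrivial point to be the tail estimate on $N_n$, since it requires the explicit Mecke computation paired with the geometric restriction $c(\cX)\in\sigma(\cX)$. However, this is just a shifted-window version of the calculations already used in the proofs of Theorems~\ref{thm:prob_Fkr_cov} and~\ref{thm:Poisson_lim}, so no essentially new ingredient beyond the critical-point characterization of Theorem~\ref{thm:crit_pts} should be required.
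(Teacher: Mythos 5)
Your argument is correct and is exactly what the paper's (unwritten) proof of this corollary amounts to: combine Lemma~\ref{lem:oneone} with Theorem~\ref{thm:Poisson_lim}, observe that KR-convergence of $\xi_k$ to $\zeta_k$ gives TV-convergence of the total count $\xi_k(\mathbb Y)$ to $\zeta_k(\mathbb Y)\sim\mathrm{Poisson}(C_d e^{-\lambda_0})$, and show that the truncation at $\rho=\sqrt{r_0}$ removes only a $o(1)$ tail via a Mecke/incomplete-gamma estimate of the form you wrote. The one small omission is in the last display: since Lemma~\ref{lem:oneone} only gives $V_{k,r_0}=\Ncrit{d}$ with high probability, the correct coupling bound is $d_{\mathrm{TV}}(V_{k,r_0},\xi_k(\mathbb Y))\le \prob(V_{k,r_0}\neq\Ncrit{d})+\prob(N_n\ge 1)$, both summands of which you have already shown tend to $0$, so the conclusion is unchanged.
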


\subsection{Critical points of the $k$-NN distance function}

The key advantage of our Morse theoretic approach, is that it allows us to replace the ``brute-force'' search for uncovered regions, with a combinatorial counting of critical points. To this end, define,
\begin{equation}\label{eqn:N_k}
\Ncrit{\mu} \coloneqq \text{\#critical points $c\in\T^d$ of index }\mu\text{ with }\rho_c = d_{\cP}^{(k)}(c)\geq r.    
\end{equation}

The next result presents phase transitions for the vanishing of the critical points of the $k$-NN distance function. This result is important in its own right, while also serving as an essential component in the proof of Theorem \ref{thm:coverage_Td}.

\begin{thm}\label{thm:prob_Fkr_cov}
Let $k\geq 1$ and $0\leq \mu\leq d$, and suppose that $w(n)\rightarrow\infty$ as $n\rightarrow\infty$. Then,
\[
\lim_{n\rightarrow\infty}\prob(\Ncrit{\mu}=0)
=
\begin{cases}
1 & \Lambda = \log n + (\mu+k-2)\log\log n + w(n), \\
0 & \Lambda = \log n + (\mu+k-2)\log\log n - w(n),
\end{cases}
\]
excluding the case $k=1$, $\mu=0$, in which $N_{k,r}^\mu = 0$ for all $r>0$.
\end{thm}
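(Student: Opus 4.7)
The plan is to reduce the statement to a first- and second-moment analysis of $\Ncrit{\mu}$ by decomposing it according to the size of the underlying critical configuration. By Theorem~\ref{thm:crit_pts}, a subset $\cX\subset\cP_n$ generates a critical point of index $\mu$ if and only if $m:=|\cX|$ lies in the range $\max(2,\mu+1)\le m\le \mu+k$, together with $c(\cX)\in\sigma(\cX)$ and $\cI(\cX,\cP_n)=\mu+k-m$. I would write $\Ncrit{\mu}=\sum_m \Ncrit{\mu,m}$, where $\Ncrit{\mu,m}$ counts such critical points with $|\cX|=m$ and $\rho(\cX)\ge r$. The excluded case $k=1,\mu=0$ corresponds to the empty range $\mu+1>\mu+k$, so $\Ncrit{\mu}\equiv 0$ deterministically.

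For the first moment, the multivariate Mecke formula, followed by conditioning on $\cB(\bx)\cap\cP_n$, gives
\[
\E[\Ncrit{\mu,m}] = \frac{n^m}{m!}\int_{(\T^d)^m}\ind\{\rho(\bx)\ge r,\;c(\bx)\in\sigma(\bx)\}\,e^{-\Lambda_{\rho(\bx)}}\frac{\Lambda_{\rho(\bx)}^{\mu+k-m}}{(\mu+k-m)!}\,d\bx,
\]
with $\Lambda_\rho=n\omega_d\rho^d$. I would then change variables from $(x_1,\ldots,x_m)$ to $(c,\rho,\bs{y})$, where $c$ is the circumcenter, $\rho$ the circumradius, and $\bs{y}$ encodes the shape on the unit sphere; the Jacobian is of Blaschke--Petkantschin type and scales as $\rho^{d(m-1)-1}$. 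Integrating out $c$ over $\T^d$ (of volume $1$) and the shape under the constraint $c\in\sigma$ yields a positive shape constant $K_{m,d,\mu}$, and after substituting $\Lambda=n\omega_d\rho^d$ the expression collapses to
\[
\E[\Ncrit{\mu,m}]\;=\;\frac{n\,K_{m,d,\mu}}{d\,m!\,\omega_d^{\,m-1}(\mu+k-m)!}\int_{\Lambda_r}^{\infty} \Lambda^{\mu+k-2}e^{-\Lambda}\,d\Lambda.
\]
The incomplete-gamma asymptotic $\int_\Lambda^\infty t^{\mu+k-2}e^{-t}\,dt=(1+o(1))\Lambda^{\mu+k-2}e^{-\Lambda}$ then gives $\E[\Ncrit{\mu,m}]\approx C_{m,d,\mu,k}\,n\,\Lambda_r^{\mu+k-2}e^{-\Lambda_r}$; inserting $\Lambda_r=\log n+(\mu+k-2)\log\log n+w(n)$ produces the exact cancellation $\E[\Ncrit{\mu,m}]\approx C_{m,d,\mu,k}\,e^{-w(n)}$.

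Summing over the finitely many admissible sizes, $\E[\Ncrit{\mu}]\asymp e^{-w(n)}$. The supercritical direction ($w(n)\to+\infty$) is then immediate from Markov's inequality. For the subcritical direction ($w(n)\to-\infty$, so $\E[\Ncrit{\mu}]\to\infty$), I would apply the second moment method. The bivariate Mecke formula expresses $\E[\Ncrit{\mu,m}(\Ncrit{\mu,m}-1)]$ as an integral over two configurations $\bx,\bs{x}'$, weighted by the joint Poisson probability that $\cB(\bx)$ and $\cB(\bs{x}')$ each contain exactly $\mu+k-m$ points of $\cP_n$. I would split this integral by whether $\cB(\bx)\cap\cB(\bs{x}')=\emptyset$ or not: on the disjoint part the two Poisson counts are independent and the integrand factorizes, contributing $(1+o(1))\E[\Ncrit{\mu,m}]^2$; on the overlapping part both circumcenters are forced to lie within distance $O(r)\to 0$, producing an extra factor of $r^d$ relative to the single-configuration bound, so the contribution is $o(\E[\Ncrit{\mu,m}]^2)$. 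Cross-terms between different sizes $m\ne m'$ are treated identically. Chebyshev then yields $\prob(\Ncrit{\mu}=0)\to 0$.

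The main obstacle will be the overlap case in the second moment: one has to verify that even when two critical configurations are geometrically close, the joint constraints (both circumcenters strictly inside their respective simplices, both balls containing exactly $\mu+k-m$ Poisson points) impose enough rigidity to render the overlap contribution a genuinely lower-order correction. This requires careful tracking of the polynomial-in-$\log n$ prefactors against the exponential $e^{-\Lambda}$, since the first moment is only $O(e^{-w(n)})$ and the margin against $\E[\Ncrit{\mu,m}]^2$ is tight.
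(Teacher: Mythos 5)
Your overall strategy coincides with the paper's: compute $\E[\Ncrit\mu]$ via Mecke plus a Blaschke--Petkantschin change of variables to obtain $\E[\Ncrit\mu]\approx C n\Lambda^{\mu+k-2}e^{-\Lambda}$, then apply Markov in the supercritical regime and a second-moment/Chebyshev argument in the subcritical regime. Your first-moment computation is essentially correct (one slip: the admissible size range is $\max\{2,\mu+1\}\le m\le\min\{d+1,\mu+k\}$, not $m\le\mu+k$; and the excluded case $k=1,\mu=0$ comes from $\max\{2,\mu+1\}=2>1=\min\{d+1,\mu+k\}$, not from ``$\mu+1>\mu+k$'').

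The gap is in the second moment, and it is not a technicality you can defer. You claim the overlapping-pair contribution picks up ``an extra factor of $r^d$ relative to the single-configuration bound, so the contribution is $o(\E[\Ncrit{\mu,m}]^2)$.'' That accounting is too crude. The issue is that when $\cB(\bx_1)\cap\cB(\bx_2)\ne\emptyset$, the joint Poisson factor is $e^{-n V_{\mathrm{uni}}(\bx_1,\bx_2)}$ with $V_{\mathrm{uni}}$ ranging anywhere between $\omega_d\rho^d$ and $2\omega_d\rho^d$. In the worst case (nearly coincident balls) you only retain $e^{-\Lambda}$ rather than $e^{-2\Lambda}$. Tracking the powers, the crude bound yields a contribution of order $n\,\Lambda^{p}\,e^{-\Lambda}$ for some nonnegative $p$ that can exceed $\mu+k-2$, i.e., at best of size $\Lambda^{q}\,e^{w(n)}$ with $q>0$, whereas $\E[\Ncrit\mu]^2\sim e^{2w(n)}$. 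Since the theorem only assumes $w(n)\to\infty$ (possibly slower than $\log\log n$), $\Lambda^q e^{w}=o(e^{2w})$ is \emph{not} automatic. Your $r^d$ saving therefore does not by itself close the gap, and it is exactly this borderline competition between polynomial-in-$\Lambda$ prefactors and the exponential that makes the subcritical direction delicate.

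The paper resolves this by a multi-scale decomposition of the overlap region (their $I_j^{(1)},I_j^{(2)},I_j^{(3)}$, governed by scales $\epsilon_j,\delta_j$), combined with quantitative spherical-cap volume lower bounds (their Lemmas B.2--B.3) showing that whenever the two balls do not nearly coincide, $V_{\mathrm{uni}}$ exceeds $\omega_d\rho^d$ by a definite margin $\sim\delta_j\Lambda$, buying an extra $e^{-c\,\delta_j\Lambda}$; and that when they \emph{do} nearly coincide, the rigidity of the critical-point constraint $c\in\sigma(\cX)$ (via the normalized distance $\phi$ and a Lipschitz estimate on the shape integral) shaves off the necessary additional powers of $\epsilon_j$. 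A partial Blaschke--Petkantschin formula is also needed to handle the $j\ge1$ shared-point terms, which your proposal folds into a single ``overlap'' bucket without separate treatment. You identify the overlap analysis as ``the main obstacle,'' which is accurate, but as written the proposal neither supplies nor sketches the estimate that actually closes it; as it stands the subcritical direction of the theorem is not proved.
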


\subsection{Topological conclusions}
The Euler characteristic is an integer-valued topological invariant. For a topological space $X$ it can be defined as the signed-sum of the Betti numbers,
\[
\chi(X) := \sum_{i=0}^{\infty}(-1)^{i}\beta_i(X).   
\] 
However, one of the outcomes of Morse Theory is that
\[
\chi(X) = \sum_{i=0}^{\infty}(-1)^{i}N_i(X),
\]
where $N_i(X)$ is the number of critical points of index $i$ for any Morse function $f:X\to \R$. Our analysis of critical points for the $k$-NN distance function  enables us to prove the following.
\begin{prop}\label{prop:euler_char}
If $r<\rmax$ then
\[
\E\{\chi(B_r^{(k)})\} = ne^{-\Lambda}
\sum_{i=0}^{d+k-2}A_i\Lambda^i,
\]
where $A_i$ are constants that depend only on $i$, $k$ and $d$ , and are defined in \eqref{eq:euler_consts}. Additionally, $A_0=1$ for $k=1$, and $A_0=0$ for $k>1$.
\end{prop}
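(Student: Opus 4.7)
The plan is to combine the Morse-theoretic Euler-Poincar\'e identity for the $k$-NN distance function with Mecke's formula for Poisson factorial moments, and then integrate out the geometry using a Blaschke-Petkantschin-type change of variables.

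First, I would express the Euler characteristic as a signed count of critical points. By the Morse theory of \cite{reani2024knn}, each critical point $c$ with index $\mu_c$ and configuration $\cX_c$ induces $\Delta_c=\binom{|\cX_c|-1}{\mu_c}$ simultaneous topological changes, each of which either generates a $\mu_c$-cycle or eliminates a $(\mu_c-1)$-cycle; in both cases this contributes $(-1)^{\mu_c}$ to $\chi$. Summing over critical points in the sublevel set,
\[
\chi(B_r^{(k)}) = \ind\{k=1\}|\cP_n| \;+\; \sum_{c\,:\,\rho_c\le r}(-1)^{\mu_c}\binom{|\cX_c|-1}{\mu_c},
\]
where the first term accounts for the trivial index-$0$ critical points at the points of $\cP_n$ themselves (corresponding to $|\cX|=1$), which fall outside the scope of Theorem \ref{thm:crit_pts} and exist only when $k=1$. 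Using Theorem \ref{thm:crit_pts} to enumerate the remaining critical points by $(m,\mu)$ with $|\cX|=m+1$ and $\cI=\mu+k-m-1$, where $\max(\mu,1)\le m\le\min(d,\mu+k-1)$, and applying Mecke's formula to $\cP_n$,
\[
\E[\chi(B_r^{(k)})] = \ind\{k=1\}n + \sum_{\mu,m}\frac{(-1)^\mu\binom{m}{\mu}n^{m+1}}{(m+1)!}\int\ind\{c\in\sigma(\cX),\rho\le r\}\frac{(n\omega_d\rho^d)^{\cI}e^{-n\omega_d\rho^d}}{\cI!}d\cX.
\]

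Next, I would evaluate the integral via a Blaschke-Petkantschin decomposition $(x_1,\ldots,x_{m+1})\leftrightarrow(c,\rho,L,\theta_1,\ldots,\theta_{m+1})$, where $c$ is the circumcenter, $\rho$ the circumradius, $L$ the $m$-dimensional affine subspace through $c$, and $\theta_i$ angular coordinates on $S^{m-1}\subset L$. The Jacobian carries a factor $\rho^{dm-1}$; integrating $c$ over $\T^d$ contributes unit volume; and integrating over $(L,\theta_1,\ldots,\theta_{m+1})$ restricted by $c\in\sigma(\cX)$ contributes a purely geometric constant $D_{d,m}>0$ depending only on $d$ and $m$. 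The radial substitution $s=n\omega_d\rho^d$ then cancels all but one factor of $n$ (since $n^{m+1}(n\omega_d)^{-m}=n\omega_d^{-m}$), reducing the radial integral to a lower incomplete gamma function, so that each $(\mu,m)$-summand becomes
\[
\frac{n\,D_{d,m}}{d\,\omega_d^{m}(m+1)!\,\cI!}\,\gamma(\mu+k-1,\Lambda),\qquad \gamma(a,\Lambda)=(a-1)!\left(1-e^{-\Lambda}\sum_{i=0}^{a-1}\frac{\Lambda^i}{i!}\right).
\]
Thus each summand splits into a $\Lambda$-independent part (of order $n$) plus $ne^{-\Lambda}$ times a polynomial in $\Lambda$ of degree $\mu+k-2$; the maximum degree $d+k-2$ is attained at $\mu=d$.

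Finally, I would collect terms. Summed over $(\mu,m)$ with signs $(-1)^\mu\binom{m}{\mu}$, the $\Lambda$-independent contributions must produce exactly $-\ind\{k=1\}n$ to cancel the trivial-minima term, leaving the clean form $ne^{-\Lambda}\sum_{i=0}^{d+k-2}A_i\Lambda^i$; the $A_i$ are then read off as explicit finite sums of the $D_{d,m}$'s with binomial and factorial weights, yielding \eqref{eq:euler_consts}. Evaluating at $\Lambda=0$ gives $A_0=\ind\{k=1\}$, matching the small-$r$ behaviour where $B_r^{(1)}$ is a disjoint union of $|\cP_n|$ balls ($\chi=n$) while $B_r^{(k)}=\emptyset$ for $k\ge2$. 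I expect the main obstacle to be the Blaschke-Petkantschin step together with the simplex-containment indicator $\ind\{c(\cX)\in\sigma(\cX)\}$: this packages a nontrivial positive-measure geometric condition into the constants $D_{d,m}$ and must be tracked carefully through the substitutions. A secondary technical point is verifying the cancellation of the $\Lambda$-independent contributions, which at a high level reflects the boundary condition $\E[\chi(B_r^{(k)})]\big|_{\Lambda=0}=\ind\{k=1\}n$.
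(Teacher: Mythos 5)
Your overall route matches the paper's: write $\chi(B_r^{(k)})$ as a signed count of critical points weighted by $\Delta_c$ (the paper isolates this as Lemma~\ref{lem:euler_formula}), apply Mecke's formula, change variables via Blaschke--Petkantschin, and reduce the radial integral to a lower incomplete gamma function $\gamma(\mu+k-1,\Lambda)$. Your bookkeeping of the $m$-range, the Jacobian power, and the cancellation of all but one factor of $n$ is correct. You also go a bit further than the paper by explicitly treating $k=1$ via a separate $\ind\{k=1\}\,|\cP_n|$ term for the trivial index-$0$ minima at the sample points; the paper restricts its proof to $k>1$ and cites the earlier work of Bobrowski--Adler for $k=1$, since Theorem~\ref{thm:crit_pts} only catches configurations with $|\cX|\ge 2$.

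The genuine gap is in the final step, where you argue that the $\Lambda$-independent part of the gamma expansion must cancel the trivial term, and you anchor this on the boundary condition $\E\{\chi(B_r^{(k)})\}\big|_{\Lambda=0}=\ind\{k=1\}\,n$. This does not determine $A_0$. After expanding $\gamma(a,\Lambda)=(a-1)!\bigl(1-e^{-\Lambda}\sum_{i<a}\Lambda^i/i!\bigr)$ you have an identity of the form $\E\{\chi\}=B + ne^{-\Lambda}\sum_{i=0}^{d+k-2}A_i\Lambda^i$ with $B=-nA_0$ built in automatically; evaluating this at $\Lambda=0$ just gives $0=0$ and hence places no constraint. (Also note $B_0^{(k)}=\emptyset$ for every $k\ge 1$, since $B_0(x)$ is an \emph{open} ball, so even for $k=1$ there is no nonzero $\Lambda=0$ value to exploit; the $n$ components only appear at $r=0^+$.) The paper instead pins down $A_0$ by sending $\Lambda\to\infty$ with $r$ fixed: the exponential term vanishes and $\lim_{n\to\infty}\E\{\chi(B_r^{(k)})\}=\chi(\T^d)=0$, so the $\Lambda$-independent term $-nA_0$ must be $0$, hence $A_0=0$ for $k>1$. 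You would need this (or an explicit combinatorial proof that $\sum_{\mu}(-1)^{\mu}D_{\mu}=0$) to close the argument; as written the ``must cancel'' step is circular.
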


Note that this result is non-asymptotic,  generalizing the result for $k=1$ in  \cite{bobrowski_vanishing_2017}. In addition, for $r\ge\rmax$, $k$-coverage is reached (with high probability), and we have $\E\{\chi(B_r^{(k)})\}\approx\chi(\T^d)=0$, which agrees asymptotically with the formula in Proposition \ref{prop:euler_char}.

\vspace{5pt}
The last topic we want to discuss is \emph{homological connectivity}. 
Upon $k$-coverage, we have $B_r^{(k)} = \T^d$
and consequently $H_i(B_r{(k)})=H_i(\T^d)$ for all $i$.
However, slightly before  $k$-coverage, $B_r^{(k)}$ might have an intricate structure, and in particular $H_i(B_r{(k)})\ne H_i(\T^d)$. Here, we  seek the smallest $r$ that guarantees the ``stabilization'' of $H_i$ in the sense that if we further increase $r$, $H_i(B_r^{(k)})$ will not change anymore, and in particular it will be isomorphic to $H_i(\T^d)$.
More concretely, we denote the \emph{homological connectivity} event as,
\begin{equation}\label{eq:hom_con_event}
\cH_{i,r}^{(k)}\coloneqq\left\{H_{i}(B_{s}^{(k)})\cong H_{i}(\T^d),\ \forall s\geq r\right\},   
\end{equation}
where the isomorphism is induced by the inclusion $B_r^{(k)}\hookrightarrow \T^d$.
Our analysis of critical points, including Theorem \ref{thm:prob_Fkr_cov}, leads to the following.

\begin{cor}\label{cor:hom_con}
Let $k\geq 1$, and suppose that $w(n)\rightarrow\infty$ as $n\rightarrow\infty$. Then,
\[
\lim_{n\rightarrow\infty}\prob(\cH_{d-1,r}^{(k)})
=
\lim_{n\rightarrow\infty}\prob(\cH_{d,r}^{(k)})
=
\begin{cases}
1 & \Lambda = \log n + (d+k-2)\log\log n + w(n), \\
0 & \Lambda = \log n + (d+k-2)\log\log n - w(n).
\end{cases}
\]    
In addition, for any $0\le i \le d-2$, if $\Lambda = \log n + (i+k-1)\log\log n + w(n)$, then
\[
\limninf\prob({\cH_{i,r}^{(k)}}) = 1.
\]
\end{cor}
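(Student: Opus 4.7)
The plan is to use Morse theory to relate $\cH_{i,r}^{(k)}$ to the vanishing of critical points of indexes $i$ and $i+1$ above $r$, and then invoke Theorem~\ref{thm:prob_Fkr_cov}. The key Morse-theoretic input is that an index-$\mu$ critical point affects only $H_\mu$ (by generating a $\mu$-cycle) or $H_{\mu-1}$ (by eliminating a $(\mu-1)$-cycle), so $H_j$ is invariant under inclusion across such a critical value whenever $j \notin \{\mu-1, \mu\}$.

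For the lower bound (the only direction required when $0 \le i \le d-2$), I would establish the containment $\{\Ncrit{i} = 0\} \cap \{\Ncrit{i+1} = 0\} \subseteq \cH_{i,r}^{(k)}$: if no index-$i$ or index-$(i+1)$ critical points lie above $r$, then for every $s \ge r$ the inclusion $B_r^{(k)} \hookrightarrow B_s^{(k)}$ induces an isomorphism on $H_i$. Since $B_s^{(k)} = \T^d$ for $s$ sufficiently large (the $k$-NN distance function is bounded once $|\cP_n| \ge k$), this propagates to $H_i(B_r^{(k)}) \cong H_i(\T^d)$ via inclusion, and the same reasoning applies at every intermediate level. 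Applying Theorem~\ref{thm:prob_Fkr_cov} to both counts: for $i = d$ only $\Ncrit{d}$ is relevant (no index $d+1$ exists); for $i = d-1$ the dominant threshold is still $\log n + (d+k-2)\log\log n$ (from $\Ncrit{d}$); for $0 \le i \le d-2$ it is $\log n + (i+k-1)\log\log n$ (from $\Ncrit{i+1}$). A positive $w(n)$ shift drives both counts to $0$ in probability, giving the lower bound.

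For the upper bound at $i = d$, I would note that $\cH_{d,r}^{(k)} \Leftrightarrow \cC_r^{(k)}$: if $B_r^{(k)}$ is a proper open subset of $\T^d$, then it is a non-compact $d$-manifold and hence $\beta_d(B_r^{(k)}) = 0 \ne 1 = \beta_d(\T^d)$, while coverage at $r$ propagates to every $s \ge r$. Theorem~\ref{thm:coverage_Td} then yields the sharp transition. For the upper bound at $i = d-1$, the crucial observation is that $\beta_d(B_s^{(k)})$ is monotone non-decreasing in $s$ (only index-$d$ critical points can change it, and they can only increase it), starting at $0$ before coverage and ending at $1$. Hence among the $m = \Ncrit{d}$ index-$d$ critical points above $r$, exactly one generates the fundamental $d$-cycle while the remaining $m-1$ eliminate $(d-1)$-cycles; each elimination produces a strict jump in $\beta_{d-1}$ at some $s \ge r$, which violates $\cH_{d-1,r}^{(k)}$. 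This shows $\cH_{d-1,r}^{(k)} \subseteq \{\Ncrit{d} \le 1\}$.

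The main obstacle is to upgrade Theorem~\ref{thm:prob_Fkr_cov} (which only yields $\prob(\Ncrit{d} \ge 1) \to 1$ below threshold) to $\prob(\Ncrit{d} \ge 2) \to 1$. I would resolve this via Theorem~\ref{thm:Poisson_lim}: for any fixed $\lambda_0 \in \R$, the count of index-$d$ critical points in the window starting at $\lambda_0$ converges in KR distance to a Poisson variable with mean $C_d e^{-\lambda_0}$, which tends to $\infty$ as $\lambda_0 \to -\infty$. Exploiting monotonicity of $\Ncrit{d}(r)$ in $r$, for $\Lambda = \log n + (d+k-2)\log\log n - w(n)$ and $n$ large, $\Ncrit{d}(r)$ dominates this nearly-Poisson variable for arbitrarily negative $\lambda_0$, yielding $\prob(\Ncrit{d} \ge 2) \to 1$ and completing the sharp transition at $i = d-1$.
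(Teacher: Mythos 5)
Your proposal follows the same overall structure as the paper: (i) the containment $\{\Ncrit{i}=0\}\cap\{\Ncrit{i+1}=0\}\subseteq\cH_{i,r}^{(k)}$ for the upper bounds, (ii) the equivalence $\cH_{d,r}^{(k)}\Leftrightarrow\cC_r^{(k)}$, and (iii) the containment $\cH_{d-1,r}^{(k)}\subseteq\{\Ncrit{d}\le 1\}$ via the observation that every index-$d$ critical point other than the global maximum must kill a $(d-1)$-cycle (equivalently, $\Delta_c=\binom{d}{d}=1$, and $\beta_d$ can only go from $0$ to $1$). All of this is what the paper does.

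The one place you genuinely diverge is in the final step, establishing $\prob(\Ncrit{d}\ge 2)\to 1$ below the $(d+k-2)\log\log n$ threshold. The paper does this directly with Chebyshev, using Proposition~\ref{prop:gen_exp_var_Fkr_gen}:
\[
\prob(\Ncrit{d}\le 1)\le \prob\bigl(|\Ncrit{d}-\E\Ncrit{d}|\ge \E\Ncrit{d}-1\bigr)\le \frac{\mathrm{Var}(\Ncrit{d})}{(\E\Ncrit{d}-1)^2}\to 0,
\]
since $\E\Ncrit{d}\approx\mathrm{Var}(\Ncrit{d})\to\infty$ in this regime. You instead route through Theorem~\ref{thm:Poisson_lim}: fix a very negative $\lambda_0$, use the KR (hence TV) bound to compare $\xi_k(\mathbb{Y})$ to a $\mathrm{Poisson}(C_d e^{-\lambda_0})$ variable, and then exploit the monotonicity $\Ncrit{d}(r)\ge\xi_k(\mathbb{Y})$ for $r\le r_0(\lambda_0,n)$ together with a diagonal argument in $\lambda_0$. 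This is valid, but be aware that Theorem~\ref{thm:Poisson_lim} is itself proved via Proposition~\ref{prop:gen_exp_var_Fkr_gen}, so you are invoking the second-moment machinery indirectly and carrying along the constant $C_{\lambda_0}$ (which you must keep fixed before letting $n\to\infty$; the diagonal works, but this should be stated explicitly). The paper's direct Chebyshev step is shorter and avoids tracking the dependence on $\lambda_0$; your route is heavier but not wrong. One small caveat to add in your writeup: the inequality $\{\Ncrit{d}\le 1\}\subseteq\{|\Ncrit{d}-\E\Ncrit{d}|\ge\E\Ncrit{d}-1\}$ (or your Poisson-domination step) needs $\E\Ncrit{d}>1$, which is fine for $n$ large, and the same implicit monotonicity reduction covers $w(n)$ growing faster than $\log\log n$, where Proposition~\ref{prop:gen_exp_var_Fkr_gen}'s variance asymptotic does not apply directly.
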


Note that the thresholds for $i=d-1,d$ are the same as the $k$-coverage threshold. For $i\le d-1$ Corollary \ref{cor:hom_con} provides only an upper bound, but based on \cite{bobrowski2022homological} we conjecture that the following is true.

\begin{con}\label{con:hom_con}
Let $k\geq 1$ and $0\leq i\leq d-2$, and suppose that $w(n)\rightarrow\infty$ as $n\rightarrow\infty$. Then,
\[
\lim_{n\rightarrow\infty}\prob(\cH_{i,r}^{(k)})
=
\begin{cases}
1 & \Lambda = \log n + (i+k-2)\log\log n + w(n), \\
0 & \Lambda = \log n + (i+k-2)\log\log n - w(n).
\end{cases}
\]  
\end{con}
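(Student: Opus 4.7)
The plan is to extend the Morse-theoretic machinery used for the coverage phase transition (Theorem \ref{thm:coverage_Td}) and the critical-point counts (Theorem \ref{thm:prob_Fkr_cov}) to the level of \emph{individual} Betti numbers. A useful starting observation is that the conjectured threshold $(i{+}k{-}2)\log\log n$ coincides exactly with the vanishing threshold for critical points of index $i$ given by Theorem \ref{thm:prob_Fkr_cov}. This strongly suggests that the binding constraint on $\cH_{i,r}^{(k)}$ is the birth of new $i$-cycles by index-$i$ critical points, while the index-$(i{+}1)$ critical points that survive past this threshold contribute only to $(i{+}1)$-cycle generation (not to termination of $i$-cycles).

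The first step would be to reformulate $\cH_{i,r}^{(k)}$ via Morse theory for $d_{\cP_n}^{(k)}$: the event fails iff, for some $s\geq r$, a critical point of $d_{\cP_n}^{(k)}$ of index $i$ or $i+1$ induces a homological change that breaks the isomorphism $H_i(B_s^{(k)})\cong H_i(\T^d)$. Using Theorem \ref{thm:crit_pts}, each such critical point is catalogued by a pair $(\cX,\cI)$ with $|\cX|\in\{\mu+1,\dots,\mu+k\}$ and $\cI=\mu+k-|\cX|$, and the number of homological changes it induces is $\Delta_c=\binom{|\cX|-1}{\mu}$ by \eqref{eq:delta_c}. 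The crucial refinement is to partition index-$(i{+}1)$ critical points into those whose $\Delta_c$ changes include a termination of a true $i$-cycle (``dangerous'') and those that only create spurious $(i{+}1)$-cycles (``harmless''). The conjecture would then follow if the dangerous population vanishes at the same threshold as the index-$i$ points.

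For the supercritical side, one would combine Theorem \ref{thm:prob_Fkr_cov} applied to $\mu=i$ with a first-moment estimate showing that the dangerous index-$(i{+}1)$ configurations also disappear by $\Lambda=\log n+(i+k-2)\log\log n+w(n)$. For the subcritical side, a Chen--Stein / second-moment argument analogous to the one underlying Theorem \ref{thm:Poisson_lim} should show that a Poisson-like population of surviving index-$i$ critical points persists, each creating a spurious $i$-cycle that obstructs the isomorphism with $H_i(\T^d)$; one then needs to verify that these spurious cycles are not immediately killed by nearby index-$(i{+}1)$ events, which should hold with high probability since the expected number of such pairings is of lower order.

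The main obstacle is the refined classification in Step~2: while \cite{reani2024knn} gives the total count $\Delta_c$ of homological changes attached to a critical point, it does not directly specify which are $i$-cycle terminations versus $(i{+}1)$-cycle generations, nor whether a terminated cycle corresponds to a ``true'' generator of $H_i(\T^d)$ or a spurious one. Producing a geometric/combinatorial criterion on $(\cX,\cI)$ that cleanly separates dangerous from harmless index-$(i{+}1)$ points, and matching the resulting expected counts to the thresholds of Theorem \ref{thm:prob_Fkr_cov}, is the step that would require genuinely new work and parallels the analogous refinement carried out for $k=1$ in \cite{bobrowski2022homological}. The lower-bound argument also requires ruling out unlikely local cancellations, most naturally via a Poisson approximation at the level of these refined critical configurations, in the spirit of Theorem \ref{thm:Poisson_lim}.
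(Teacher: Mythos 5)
This statement is stated in the paper as a \emph{Conjecture}, not a theorem, and the paper contains no proof of it. The authors explicitly write, immediately after the corollary, that ``the main gap between Theorem~\ref{thm:prob_Fkr_cov} and Conjecture~\ref{con:hom_con} is proving separate phase transitions for positive critical points (those generating cycles) and negative critical points (those terminating cycles)'' and that the $k=1$ analysis of \cite{bobrowski2022homological} ``does not generalize naturally to $k>1$, and thus proving Conjecture~\ref{con:hom_con} remains as future work.''

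Your proposal is therefore not a proof, and you are correct that one cannot yet be completed: you have identified essentially the same bottleneck as the authors. Your ``dangerous vs.\ harmless'' classification of index-$(i{+}1)$ critical points is exactly the positive/negative critical-point dichotomy the paper singles out as the obstruction, and your observation that \cite{reani2024knn} only gives the total count $\Delta_c$ without saying how it splits between $\mu$-cycle creations and $(\mu{-}1)$-cycle terminations is precisely why the obstruction bites for $k>1$ (for $k=1$ every critical configuration has $|\cX|=\mu+1$, hence $\Delta_c=1$, and the sign can be read off the local geometry; for $k>1$ one has $\Delta_c=\binom{|\cX|-1}{\mu}>1$ in general). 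Your remaining steps (matching the refined expected counts to the $\Lambda=\log n+(i+k-2)\log\log n$ threshold, and a Poisson-approximation argument on the subcritical side in the spirit of Theorem~\ref{thm:Poisson_lim}) are the natural route and are consistent with what the authors envisage, but none of this is carried out in the paper. In short: your diagnosis of why the statement is open matches the paper's own; there is no proof in the paper to compare yours against, and your proposal, as you yourself acknowledge, does not close the gap.
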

Note that Conjecture \ref{con:hom_con} and Corollary \ref{cor:hom_con} are generalization of the $k=1$ case, that was proved as Theorems 3.1 and 3.2 in \cite{bobrowski2022homological}. The main gap between Theorem \ref{thm:prob_Fkr_cov} and  Conjecture \ref{con:hom_con} is proving separate phase transitions for positive critical points (those generating cycles) and negative critical points (those terminating cycles). For $k=1$ this was done in \cite{bobrowski2022homological}, but the analysis there does not generalize naturally to $k>1$, and thus proving Conjecture \ref{con:hom_con} remains as future work.

\section{Proofs}
\subsection{Critical points of the \headermath{k}-NN distance function}
In this section we provide the proof for Theorem \ref{thm:prob_Fkr_cov}. 
The first step, is the following proposition regarding the expected value and variance of $\Ncrit{\mu}$. 
\begin{prop}\label{prop:gen_exp_var_Fkr_gen}
Let $k\ge1$, $0\leq \mu \leq d$ (excluding $k=1,\mu=0$), and $r\le \rmax$. Then,
\[
\E\{\Ncrit{\mu}\}\approx C_{\mu} n \Lambda^{\mu+k-2}e^{-\Lambda},
\]
where the constant $C_\mu>0$ is given in \eqref{eq:c_mu}.
 If, in addition, we have $\Lambda = \log n + (d+k-2)\log\log n + c(n)$, $c(n) = o(\log\log n)$, then
\[
\mathrm{Var}(\Ncrit{\mu})\approx\E\{\Ncrit{\mu}\}\approx C_{\mu} e^{-c(n)}.\]
Moreover, in this case, for $\mu=d$ we have
\[
|\mathrm{Var}(\Ncrit{d}) - \E\{\Ncrit{d}\}| = O\left((\log\log n)^{d+k}(\log n)^{-\frac{1}{2(d+k)}}\right).
\]    
\end{prop}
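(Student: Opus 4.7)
The plan is to apply Palm calculus (the multivariate Slivnyak–Mecke formula) to the critical-point characterization of Theorem \ref{thm:crit_pts}, reducing both moments to multivariate integrals over $\T^d$; then exploit the homogeneity of the torus and a radial/shape parametrization to extract the $\Lambda$-dependence via incomplete-gamma asymptotics. This generalizes the strategy used for the ordinary distance function ($k=1$) in \cite{bobrowski_random_2019}; the new ingredient for $k>1$ is the Poisson factor $(n\omega_d\rho^d)^{\cI}/\cI!\cdot e^{-n\omega_d\rho^d}$ counting the $\cI$ Poisson points forced inside the critical ball.

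By Theorem \ref{thm:crit_pts}, every critical point of index $\mu$ corresponds bijectively to a subset $\cX\subset\cP_n$ with $j:=|\cX|\in\{2,\dots,d+1\}$, $\cI_j:=\cI(\cX,\cP_n)=k+\mu-j\in\{0,\dots,k-1\}$, and $c(\cX)\in\sigma(\cX)$. Slivnyak–Mecke gives
\[
\E\{\Ncrit{\mu}\}=\sum_{j}\frac{n^j}{j!}\int_{(\T^d)^j}\ind\{c(\cX)\in\sigma(\cX),\,\rho(\cX)\geq r\}\frac{(n\omega_d\rho(\cX)^d)^{\cI_j}}{\cI_j!}e^{-n\omega_d\rho(\cX)^d}\,d\cX,
\]
summed over admissible $j$. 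Using translation invariance and the rescaling $x_i=x_1+r y_i$ (Jacobian $r^{d(j-1)}$), the integrand splits into a scale-free shape constant $K_j$ and a radial integral $\int_1^\infty\rho^{d(j-1)-1}(\Lambda\rho^d)^{\cI_j}e^{-\Lambda\rho^d}\,d\rho/\cI_j!$. The substitution $u=\Lambda\rho^d$ reduces the latter to an upper incomplete gamma function with large-$\Lambda$ asymptotic $\sim \Lambda^{\cI_j-1}e^{-\Lambda}/(d\,\cI_j!)$. Collecting prefactors and using $j+\cI_j=\mu+k$ yields $\E\{\Ncrit{\mu}\}\approx C_\mu\,n\,\Lambda^{\mu+k-2}e^{-\Lambda}$ with an explicit $C_\mu$; specializing to $\Lambda=\log n+(d+k-2)\log\log n+c(n)$ gives $C_\mu e^{-c(n)}$ in the relevant regime.

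For the variance, write $\mathrm{Var}(\Ncrit{\mu})=\E\{\Ncrit{\mu}\}+\E\{\Ncrit{\mu}(\Ncrit{\mu}-1)\}-(\E\{\Ncrit{\mu}\})^2$. Apply the multivariate Mecke formula to ordered pairs $(\cX_1,\cX_2)$ of critical configurations; the resulting integral splits into three regimes: (i) disjoint vertex sets with non-overlapping critical balls, (ii) disjoint vertex sets with overlapping balls, (iii) pairs sharing at least one vertex. Regime (i), together with translation invariance, contributes exactly $(\E\{\Ncrit{\mu}\})^2(1+o(1))$ by the Palm-independence argument for disjoint Poisson regions. In regimes (ii)–(iii), the exponential factor $e^{-n\omega_d\vol(B_1\cup B_2)}$ retains strength at least $e^{-\Lambda}$, while the geometric constraints forcing proximity of the configurations (or shared vertices) produce additional $O(r^d)=O(\Lambda/n)$ factors per overlap dimension. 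This shows the overlap contribution is of smaller order than $\E\{\Ncrit{\mu}\}$ in the specified regime, establishing $\mathrm{Var}(\Ncrit{\mu})\approx\E\{\Ncrit{\mu}\}$.

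The main obstacle is the quantitative bound for $\mu=d$. Here one must control two competing regimes in the overlap integral: when $\|c_1-c_2\|$ is small (say, below a cutoff $\delta$) the proximity constraint is strong but the exponential suppression is only marginally better than for a single configuration; when $\|c_1-c_2\|$ is larger but the balls still touch, the exponential is nearly as strong as for two independent samples, but the constraint forcing overlap is weak. A dyadic decomposition on $\|c_1-c_2\|/r$ and optimization of $\delta$ balances these two estimates, producing the rate $(\log n)^{-1/(2(d+k))}$; the $(\log\log n)^{d+k}$ prefactor absorbs the finite sum over $(j_1,j_2)\in\{2,\dots,d+1\}^2$, the factors of $\Lambda\sim\log n$ arising from lower-order terms in the incomplete-gamma expansion, and the polynomial growth of the joint shape integrands. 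The delicate technical step will be proving uniform bounds (in the relative shape of $\cX_1,\cX_2$) on these shape integrands, together with the combinatorial bookkeeping of overlap patterns, to make the dyadic sum and the explicit constant $C_{\lambda_0}$ in Theorem \ref{thm:Poisson_lim} tractable.
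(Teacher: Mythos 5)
Your first-moment argument matches the paper's in substance: Mecke's formula reduces $\E\{\Ncrit{\mu}\}$ to an integral over $m$-point configurations, a change of variables separates shape from scale, and an upper incomplete gamma function produces $n\Lambda^{\mu+k-2}e^{-\Lambda}$ via $j+\cI_j=\mu+k$. (The paper uses the Blaschke--Petkantschin formula rather than your ``rescale by $r$'' change of variables, which gives a cleaner separation of the circumradius $\rho$ from the shape degrees of freedom, but the substance is the same.)

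The variance argument, however, has a genuine gap at exactly the hard step. You claim that in regimes (ii)--(iii) ``the exponential factor retains strength at least $e^{-\Lambda}$, while the geometric constraints\ldots produce additional $O(r^d)=O(\Lambda/n)$ factors per overlap dimension,'' and that this already shows the overlap contribution is lower order than $\E\{\Ncrit\mu\}$. This bookkeeping does not close: relative to the ``independent'' contribution, constraining $c_2\in B_{2\rho_1}(c_1)$ gains one factor of $O(r^d)$, but the exponent improves from $e^{-2\Lambda}$ to (at worst) $e^{-\Lambda}$, a gain of $e^{\Lambda}$. In the critical window $\Lambda=\log n+(d+k-2)\log\log n+c(n)$ one has $r^d\,e^{\Lambda}=(\Lambda/n\omega_d)\,e^\Lambda\sim(\log n)^{d+k-1}\to\infty$, so the naive bound actually \emph{diverges}; it does not show $I_j=o(\E\{\Ncrit{\mu}\})$. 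What the paper actually does, and what is genuinely needed, is to decompose further within the overlap region by the normalized center-separation $|c_1-c_2|/\rho_1$ (the $\epsilon_j$/$\delta_j$ cutoffs in $I_j^{(1)},I_j^{(2)},I_j^{(3)}$), and to show that when $|c_1-c_2|$ is small the apparent loss is recouped by extra geometric constraints \emph{coming from the criticality condition itself}: the shape integral $\int\ind\{\phi(\bth_1)\le\epsilon\}\,d\bth_1$ is $O(\epsilon)$ (Lipschitz in $\epsilon$, cf.\ Corollary 5.5 of \cite{bobrowski2022homological}), the vacant sliver $V_2(\bx)=O(\epsilon\rho_1^d)$, and for $m_1=0$ one even rules out or penalizes certain configurations via the argument about $\hat S_3\setminus\cB(\bx_1)$; meanwhile when $|c_1-c_2|\ge\delta_j\rho_1$ the exponent gains a full $e^{-C\delta_j\Lambda}$. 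None of this is visible in the ``$O(r^d)$ per overlap'' count, which is where the proposal breaks down.

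For the $\mu=d$ rate you do mention a dyadic decomposition in $|c_1-c_2|/r$ and optimizing the cutoff, which is the right flavor, but two points are off. First, since any critical configuration of index $d$ has $|\cX|=d+1$ and $\cI(\cX,\cP_n)=k-1$ forced, there is \emph{no} sum over sizes $(j_1,j_2)\in\{2,\dots,d+1\}^2$ to absorb; that sum appears only for $\mu<d$. Second, the $(\log\log n)^{d+k}$ factor does not come from such a sum but from the choice $\delta_j\sim\log\Lambda/\Lambda$, which turns the Poisson factors $(\delta_j\Lambda)^{d-j+k+1}$ into $(\log\Lambda)^{d-j+k+1}\sim(\log\log n)^{d-j+k+1}$, maximized at the $j=1$ term. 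The $(\log n)^{-1/(2(d+k))}$ comes from balancing $\epsilon_1^{d+k}\Lambda^{d+k-1}$ against the other terms, i.e.\ $\epsilon_1\approx\Lambda^{-(d+k-1/2)/(d+k)}$. A correct write-up must make these optimizations explicit and must use the criticality-based geometric gains, not just the ball-overlap volume.
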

Note that this proposition implies that both the expectation and variance undergo a phase transition at
$\Lambda=\log n+(\mu+k-2)\log\log n$.
Namely, 
\[
\lim_{n\rightarrow\infty}\E\{\Ncrit{\mu}\}
=
\lim_{n\rightarrow\infty}\mathrm{Var}(\Ncrit{\mu})
=
\begin{cases}
0 &\log n+(\mu+k-2)\log\log n + w(n), \\
\infty & \log n+(\mu+k-2)\log\log n - w(n),
\end{cases}
\]
where $w(n)\rightarrow \infty$ as $n\rightarrow\infty$.
In the following, we provide the detailed proof for the expectation and convergence rate parts. We postpone the proof for the variance part to Section \ref{sec:var_proof}, due to its length and technical details.
\begin{proof}[Proof of Proposition \ref{prop:gen_exp_var_Fkr_gen} -- first moment]
Let $0\leq\mu\leq d$ and let $\cX\subset\Rd$ be a set of $m$ points, where by \eqref{eq:I_and_mu} and Theorem \ref{thm:crit_pts} $m$ is in the range $\{\ubar{m}_{\mu},\ldots,\bar{m}_{\mu}\}$, with
\begin{equation}\label{eq:m_min_max}
\ubar{m}_{\mu}=\max\{2,\mu+1\},\quad\text{and}\quad \bar{m}_{\mu} = \min\{d+1,\mu+k\}.
\end{equation}

Recall from \eqref{eq:crit_ball} the definitions of $S(\cX)$, $c(\cX)$, $\rho(\cX)$, $\sigma(\cX)$, $\cB(\cX)$, and $\cI(\cX,\cP)$.
Define
\begin{equation}\label{eq:g_r}
\begin{split}
&
\hcrit(\cX) \coloneqq \ind\{c(\cX)\in\sigma(\cX)\},
\\
&
\gcrit^{i}(\cX,\cP_n) \coloneqq \ind\{\cI(\cX,\cP_n)=i\},
\end{split}
\end{equation}
so that $\hcrit(\cX)$ verifies the first condition
of Theorem \ref{def:crit_pnt}. The second condition of Theorem \ref{def:crit_pnt} is verified by limiting the values $i$ can take.
In addition, define
\begin{equation}\label{eq:grk}
\begin{split}
h_r(\cX)&\coloneqq\hcrit(\cX)\ind\{\rho(\cX)\in(r,r_{\max}]\},
\\
g_r^i(\cX,\cP) & \coloneqq \gcrit^{i}(\cX,\cP_n)h_r(\cX).
\end{split}
\end{equation}
Let $\cP_n^{(m)}$ be the collection of all subsets of $\cP_n$ of size $m$.
Using these notations and \eqref{eq:I_and_mu}, we can express the  number of critical points $N_{k,r}^\mu$ \eqref{eqn:N_k} as 
\begin{equation}\label{eq:Fmr_cov}
    \Ncrit{\mu}
    =
    \sum_{m=\ubar{m}_{\mu}}^{\bar{m}_{\mu}}
    \sum_{\cX\in\cP_n^{(m)}}
    g_r^{\mu+k-m}(\cX,\cP_n).
\end{equation}
By taking the expectation and applying Mecke's formula (see Theorem \ref{th:Palm}), we have
\[
\E\{\Ncrit{\mu}\}= 
\sum_{m=\ubar{m}_{\mu}}^{\bar{m}_{\mu}}
\frac{n^{m}}{m!} \E\{g_r^{\mu+k-m}(\cX',\cP_n\cup\cX')\},
\]
where $\cX'$ is a set of $m$ i.i.d.~random variables uniformly distributed  in $\T^d$, independent of $\cP_n$. 
Taking the expected value of $g_{r}^{\mu+k-m}(\cX,\cP)$ conditioned on $\cX'$, we have, by the properties of the Poisson process $\cP_n$,
\begin{equation}\label{eq:g_con_exp}
    \begin{split}
  \E\{g_r^{\mu+k-m}(\cX',\cP_n\cup\cX')\ |\ \cX' = \bx\}
  &
  =
  h_r(\bx)\prob(|B(\bx)\cap\cP_n|=\mu+k-m)
  \\
  &
  =
  h_r(\bx)\frac{\param{n\omega_d\rho(\bx)^d}^{\mu+k-m}}{(\mu+k-m)!} e^{-n\omega_d\rho(\bx)^d}.      
    \end{split}
\end{equation}
This leads to,
\begin{equation*}
    \begin{split}
        \E\{g_r^{\mu+k-m}(\cX',\cP_n\cup\cX')\}
        &
        =
        \frac{(n\omega_d)^{\mu+k-m}}{(\mu+k-m)!}
        \int_{(\T^d)^{m}}h_r(\bx)
        \rho(\bx)^{d(\mu+k-m)} e^{-n\omega_d\rho(\bx)^d}d\bx.
    \end{split}
\end{equation*}
In order to simplify the above integral, we apply the Blaschke-Petkantschin (BP) formula (Lemma \ref{lem:bp}). 
This change of variables replaces the $m$-point configuration $\bx$, with variables that first locate the minimal circumsphere $S(\bx)$ and then use spherical coordinates on  $S(\bx)$. This leads to,
\begin{equation*}
    \begin{split}
        &\E\{g_r^{\mu+k-m}(\cX',\cP_n\cup\cX')\}\\
        &
         \qquad=\frac{(n\omega_d)^{\mu+k-m}}{(\mu+k-m)!}D_{\mathrm{bp}}\int_{0}^{\infty}\int_{(\mathbb{S}^{m-2})^{m}}
\rho^{d(\mu+k-m)}h_r(\rho\bs{\theta})e^{-n\omega_d\rho^d}\rho^{d(m-1)-1}(V_{\mathrm{simp}}(\bs{\theta}))^{d-m+2}d\bs{\theta}d\rho
\\
          &
        \qquad=
        C_{\mu,m} n^{\mu+k-m}
        \int_{r}^{r_{\max}}\rho^{d(\mu+k-1)-1} e^{-n\omega_d\rho^d}d\rho,
    \end{split}
\end{equation*}
where 
\[
C_{\mu,m}
\coloneqq
D_{\mathrm{bp}}\frac{\omega_d^{\mu+k-m}}{(\mu+k-m)!}\int_{(\mathbb{S}^{m-2})^{m}}\hcrit(\bs{\theta})(\vsimp(\bs{\theta}))^{d-m+2}d\bs{\theta},    
\]
and where $\mathbb{S}^{m-2}$ is the unit sphere in $\R^{m-1}$, $\vsimp(\bth)$ is the volume of the simplex spanned by $\bth$, and $D_{\mathrm{bp}}$ is a constant defined in Lemma \ref{lem:bp}.
By Lemma \ref{lem:gamma}, we have
\[
\begin{split}
            \E\{g_r^{\mu+k-m}(\cX',&\cP_n\cup\cX')\}
            \\
            &
            =
            C_{\mu,m} n^{\mu+k-m}\frac{1}{d(n\omega_d)^{\mu+k-1}}\param{\Gamma(\mu+k-1,\Lambda)-\Gamma(\mu+k-1,\Lambda_{\max})}
            \\
            &
            \approx
            \tilde{C}_{\mu,m}
            n^{-m+1}\Lambda^{\mu+k-2}e^{-\Lambda}
            ,
    \end{split}
\]
where $\Gamma(p,t) = \int_t^\infty z^{p-1}e^{-z}dz $ is the \emph{upper incomplete gamma function},  $\Lambda_{\max} := n\omega_d r^d_{\max}$, and 
\begin{equation}\label{eqn:tC_mu_m}
    \tilde{C}_{\mu,m}\coloneqq \frac{C_{\mu,m}}{d\omega_d^{\mu+k-1}}.
\end{equation}
Putting this back into \eqref{eq:Fmr_cov}, we  have 
\begin{equation*}
\E\{N_{i,r}^{\mu}\}
\approx
\sum_{m=\ubar{m}_{\mu}}^{\bar{m}_{\mu}}
\frac{\tilde{C}_{\mu,m}}{m!}
n\Lambda^{\mu+k-2}e^{-\Lambda}
=
C_{\mu}n\Lambda^{\mu+k-2}e^{-\Lambda},    
\end{equation*}
where 
\begin{equation}\label{eq:c_mu}
C_{\mu}\coloneqq\sum_{m=\ubar{m}_{\mu}}^{\bar{m}_{\mu}}\frac{\tilde{C}_{\mu,m}}{m!},
\end{equation}
which completes the proof.
\end{proof}

\begin{proof}[Proof of Theorem \ref{thm:prob_Fkr_cov}]
Let $\Lambda=\log n + (\mu+k-2)\log\log n + w(n)$, then $\E\{\Ncrit{\mu}\}\approx C_{\mu}e^{-w(n)}\rightarrow 0$, as $n\rightarrow\infty$. Hence, by Markov's inequality, we have 
\[
\prob(\Ncrit{\mu}>0)\leq E\{\Ncrit{\mu}\}\rightarrow 0.
\]
For $\Lambda = \log n + (\mu+k-2)\log\log n -w(n)$, we use Chebyshev's inequality, namely,
\[
\prob(\Ncrit{\mu}=0) 
\leq
\prob(|\Ncrit{\mu} - \E\{\Ncrit{\mu}\}| >\E\{\Ncrit{\mu}\})
\leq
\frac{\mathrm{Var}(\Ncrit{\mu})}{\E\{\Ncrit{\mu}\}^2}.
\]
By Lemma \ref{prop:gen_exp_var_Fkr_gen} we have $\E\{\Ncrit{\mu}\}\approx\mathrm{Var}(\Ncrit{\mu})$, and since $\E\{\Ncrit{\mu}\}\approx C_{\mu} e^{w(n)}\rightarrow \infty$, we have $\prob(\Ncrit{\mu}=0)\rightarrow0$.  
This completes the proof.
\end{proof}

\subsection{Phase transition for k-coverage}\label{sec:coverage}
Our analysis of critical points in the previous section, will now pay off, 
providing a very simple proof for Theorem \ref{thm:coverage_Td}.

\begin{proof}[Proof for Theorem \ref{thm:coverage_Td}]
Let $f:\cM\to \R$, and $f^{-1}((-\infty,\alpha])$ be  the sublevel set. Setting $f_{\max}  = \max_{\cM} f$, it is clear that $f^{-1}((-\infty,\alpha]) = \cM$ if and only if $\alpha \ge f_{\max}$. For a Morse function on a closed manifold $\cM$ (no boundary), the maximum is obtained at a local maximum, which is a critical point of index $d$ (assuming $\dim(\cM) = d$). In our setting this observation translates to the fact that the event $\cC_r^{(k)}$ occurs if and only if $\Ncrit{d} = 0$.

The result then immediately follows from 
Theorem \ref{thm:prob_Fkr_cov}. 
\end{proof}

\subsection{Poisson limit for last vacant components}\label{sec:poisson_lim}
In this section we provide the proof for Theorem \ref{thm:Poisson_lim}, namely the distribution of the last vacant components in the critical window (right before $k$-coverage is reached). 

Recall the definition of the $k$-vacancy process $\cV_r^{(k)}$ \eqref{eq:Vacancy}. We first want to establish a one-to-one correspondence between the connected components of $\cV_r^{(k)}$ and the critical points of $d_{\cP_n}^{(k)}$.

\begin{lem}\label{lem:oneone}
    Let $\Lambda = \log n + (d+k-2)\log\log n + \lambda_0$. Denote by $V_{k,r}$ the number of connected components of $\cV_r^{(k)}$. Then
    \[
        \limninf\prob(V_{k,r} = \Ncrit{d}) = 1.
    \]
\end{lem}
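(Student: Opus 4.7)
The plan is to use the generalized Morse-theoretic framework for $d_{\cP_n}^{(k)}$ developed in \cite{reani2024knn} to establish a deterministic inequality $V_{k,r}\le \Ncrit{d}$, with equality precisely when no ``merging'' events have occurred, and then to show, via a second application of Theorem \ref{thm:prob_Fkr_cov} at index $\mu=d-1$, that in the critical window no such mergings occur with high probability.

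First, I would regard $\cV_r^{(k)}$ as the superlevel set $\{d_{\cP_n}^{(k)}>r\}$ and track its number of connected components $V_{k,r}$ as $r$ decreases from a value $r^{\ast}$ so large that $\cV_{r^{\ast}}^{(k)}=\emptyset$; at $r^{\ast}$ both $V_{k,r^{\ast}}$ and $N^{d}_{k,r^{\ast}}$ vanish. The count $V_{k,r}$ can change only when $r$ crosses a critical value of $d_{\cP_n}^{(k)}$, and using the attaching-cell description of \cite{reani2024knn} together with \eqref{eq:delta_c}, one obtains the following local picture. At an index-$d$ critical point the critical configuration satisfies $|\cX|=d+1$, so $\Delta_c=\binom{d}{d}=1$, and exactly one new vacant component is born. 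At an index-$(d-1)$ critical point $|\cX|\in\{d,d+1\}$ and $\Delta_c\in\{1,d\}$; each of the $\Delta_c$ elementary changes either merges two vacant components (decreasing $V_{k,r}$ by one) or births a $(d-1)$-cycle inside $\cV_r^{(k)}$ (leaving $V_{k,r}$ unchanged). Critical points of index at most $d-2$ affect only higher-dimensional topology of $\cV_r^{(k)}$ and leave $V_{k,r}$ unchanged. Summing these contributions yields the deterministic inequality $V_{k,r}\le \Ncrit{d}$, with equality whenever no merging event has occurred in $(r,\infty)$. In particular, on the event $\{N_{k,r}^{d-1}=0\}$ there are no index-$(d-1)$ critical points above $r$ at all, hence no mergings, and so $V_{k,r}=\Ncrit{d}$.

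Second, I would invoke Theorem \ref{thm:prob_Fkr_cov} at $\mu=d-1$. Its phase transition for vanishing sits at $\Lambda=\log n+(d+k-3)\log\log n$, whereas the critical window of interest is $\Lambda=\log n+(d+k-2)\log\log n+\lambda_0$, past the index-$(d-1)$ threshold by an additive gap of $\log\log n+\lambda_0\to\infty$. Hence $\prob(N_{k,r}^{d-1}=0)\to 1$, and combining this with the previous step yields
\[
\prob(V_{k,r}=\Ncrit{d})\;\ge\;\prob(N_{k,r}^{d-1}=0)\;\xrightarrow[n\to\infty]{}\;1,
\]
as claimed.

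The main obstacle is the bookkeeping of the first step: because $d_{\cP_n}^{(k)}$ is merely a continuous selection of smooth functions (Lemma \ref{lem:CS_in_Td}) rather than a classical Morse function, one must carefully extract from \cite{reani2024knn} the local contribution of each critical point to the homotopy type of the superlevel set, equivalently to $\beta_{d-1}(B_r^{(k)})$ modulo an Alexander-type duality argument valid for the small, localized vacant components inhabiting the critical window. Once this attaching-cell picture is in place, the reduction to $N_{k,r}^{d-1}=0$ is automatic, and the probabilistic conclusion follows immediately from Theorem \ref{thm:prob_Fkr_cov}.
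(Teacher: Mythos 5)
Your proposal is correct and takes essentially the same approach as the paper: both view the vacancy process as sublevel sets of the negated $k$-NN distance function (equivalently, as superlevel sets of $d_{\cP_n}^{(k)}$), observe that index-$d$ critical points create new vacant components while only index-$(d-1)$ critical points can merge them, and then invoke Theorem \ref{thm:prob_Fkr_cov} at $\mu=d-1$ to rule out mergers with high probability in the given window. The paper's proof is slightly more terse (it does not explicitly state the deterministic inequality $V_{k,r}\le \Ncrit{d}$ or spell out the $\Delta_c$ bookkeeping), but the key lemma used and the structure of the argument are identical.
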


\begin{proof}
Let $\bar d_{\cP_n}^{(k)} = -d_{\cP_n}^{(k)}$, and note that $\cV_{r}^{(k)}$ can be viewed as the sublevel sets of $\bar d_{\cP_n}^{(k)}$. From Morse Theory, we know that critical points of index $0$ of $\bar d_{\cP_n}^{(k)}$ generate new connected components in $\cV_r^{(k)}$, and these might merge at critical points of index $1$. In addition, every point of index $\mu$ for $d_{\cP_n}^{(k)}$ is a critical point of index $d-\mu$ for $\bar d_{\cP_n}^{(k)}$ and vice versa. 

When $\Lambda = \log n + (d+k-2)\log\log n +\lambda_0$ we know from Theorem \ref{thm:prob_Fkr_cov} that $N_{k,r}^{d-1} = 0$, implying that $\bar d_{\cP_n}^{(k)}$ does not have any critical points of index $1$ in the range $[r,\rmax]$. Therefore, the components of $\cV_r^{(k)}$ generated in the critical window, cannot merge. This implies  that $V_{k,r}$ must equal the number of critical points of index $0$ of $\bar d_{\cP_n}^{(k)}$, which in turn is equal to $\Ncrit{d}$. This completes the proof. 
\end{proof}

Lemma \ref{lem:oneone} implies that we can reduce the problem of computing the distribution of last vacant components to computing the distribution of the critical points of index $d$ that arise in the critical window. Using the results from \cite{bobrowski2022PoissonProcessApprox}, we will show that in the critical window these critical points follow a Poisson limit in the $(d+1)$ dimensional space of $\mathrm{location}\!\times\!\mathrm{size}$.

Recall that $r_0$ is defined via $n\omega_d r_0^d = \log n + (d+k-2)\log\log n + \lambda_0$, where $\lambda_0\in\R$ is fixed, and set $R_0 = \sqrt{r_0}$.
We are interested in the critical points $c\in\T^d$ of index $d$, with $\rho_c\in(r_0, R_0]$. 

Let $\cX\subset\cP_n$ such that $\cX$ generates a critical point $c=c(\cX)$ of index $d$. Slightly abusing notation, we redefine
\[
\begin{split}
h_r(\cX)& \coloneqq h_{\mathrm{crit}}(\cX)\ind\{\rho(\cX)\in(r,R_0]\},\\
g_{r}^{i}(\cX,\cP_n)&\coloneqq g_{\mathrm{crit}}^{i}(\cX,\cP_n)h_r(\cX),
\end{split}    
\]
where $h_{\mathrm{crit}},g_{\mathrm{crit}}^i$ are defined in \eqref{eq:g_r}.

Recall the definition of process  $\xi_k$ \eqref{eq:xi_k}, as a point process in $\mathbb{Y}\coloneqq \T^d\times \R_0$, with $\R_0=[\lambda_0,\infty)$. Then $\xi_k$ can be written as
\[
    \xi_k =\frac{1}{(d+1)!} \sum_{\cX\in\cP_n^{(d+1)}} 
    g_{r_0}^{k-1}(\cX,\cP_n)\delta_{(c(\cX),\lambda(\cX))},
\]
where $\lambda(\cX) = \lambda_{c(\cX)}$ and $\lambda_c$ is defined in \eqref{eq:lambda_c}.

Recall that $\zeta_k$ is a Poisson process on $\mathbb{Y}$ with intensity $C_d e^{-\lambda}d\lambda dc$. Denote by $\mathbf{L},\mathbf{M}$ the intensity measures of $\xi_k,\zeta_k$, respectively, i.e., for any measurable set $A\subset\mathbb{Y}$, we have $\mathbf{L}(A)=\E\{\xi_k(A)\}$ and $\mathbf{M}(A)=\E\{\zeta_k(A)\}$. In addition, we define
\begin{equation}\label{eq:A_hat}
\hat{A}\coloneqq\{(c,s):(c,\lambda)\in A,\ s=\log n + (d+k-2)\log\log n +\lambda\}.    
\end{equation}
The following lemma will be useful later.

\begin{lem}\label{lem:L}
Let $A\subset\mathbb{Y}$. Then,
\[
\mathbf{L}(A) = \E\{\xi_k(A)\}=C_d n
\int_{\hat{A}}
s^{d+k-2}
e^{-s}
\ind\{s\leq n\omega_d R_0^d\}
d s dc.
\]
\end{lem}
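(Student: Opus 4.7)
The plan is to mirror the first-moment computation done for Proposition \ref{prop:gen_exp_var_Fkr_gen}, but keep the center $c$ of the circumsphere as a free variable rather than integrating it out, and specialize to index $\mu=d$ (so that only $m=|\cX|=d+1$ contributes by \eqref{eq:m_min_max}, and the deficiency is $\mu+k-m=k-1$). Starting from the representation
\[
\xi_k(A) = \frac{1}{(d+1)!}\sum_{\cX\in\cP_n^{(d+1)}} g_{r_0}^{k-1}(\cX,\cP_n)\,\ind\{(c(\cX),\lambda(\cX))\in A\},
\]
I apply Mecke's formula (Theorem \ref{th:Palm}) to get
\[
\E\{\xi_k(A)\} = \frac{n^{d+1}}{(d+1)!}\,\E\bigl\{g_{r_0}^{k-1}(\cX',\cP_n\cup\cX')\,\ind\{(c(\cX'),\lambda(\cX'))\in A\}\bigr\},
\]
where $\cX'$ consists of $d+1$ i.i.d.\ uniform points on $\T^d$, independent of $\cP_n$.

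Conditioning on $\cX'=\bx$ and using the Poisson void probability exactly as in \eqref{eq:g_con_exp} replaces the inner expectation by $h_{r_0}(\bx)\frac{(n\omega_d\rho(\bx)^d)^{k-1}}{(k-1)!}e^{-n\omega_d\rho(\bx)^d}$. Writing the expectation as an integral over $(\T^d)^{d+1}$ and applying the Blaschke-Petkantschin formula (Lemma \ref{lem:bp}) I change variables to $(c,\rho,\bs{\theta})\in \T^d\times (0,\infty)\times (\mathbb{S}^{d-1})^{d+1}$. Since $h_{r_0}$ depends on $\cX$ only through $\rho$ and $\bs{\theta}$ (not $c$), and since the indicator $\ind\{(c,\lambda(\rho))\in A\}$ is the only $c$-dependent factor, the angular integration decouples and contributes the constant
\[
\tilde C_d := \frac{D_{\mathrm{bp}}\,\omega_d^{k-1}}{(k-1)!\,(d+1)!}\int_{(\mathbb{S}^{d-1})^{d+1}}\hcrit(\bs{\theta})\,V_{\mathrm{simp}}(\bs{\theta})\,d\bs{\theta},
\]
yielding
\[
\E\{\xi_k(A)\} = \tilde C_d\, n^{d+k}\!\int_{\T^d}\!\!\int_{r_0}^{R_0}\ind\{(c,\lambda(\rho))\in A\}\,\rho^{d(k+d-1)-1}e^{-n\omega_d\rho^d}\,d\rho\,dc,
\]
since $d(m-1)-1+d(\mu+k-m) = d(k+d-1)-1$ with $m=d+1$, $\mu=d$.

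Finally I substitute $s = n\omega_d\rho^d$, so that $ds = dn\omega_d\rho^{d-1}d\rho$ and $\rho^{d(k+d-1)-1}d\rho = \frac{s^{d+k-2}}{d(n\omega_d)^{d+k-1}}ds$. By definition of $\lambda_c$ in \eqref{eq:lambda_c}, the map $\rho\mapsto s$ sends $\lambda(\rho)$ to $s-\log n-(d+k-2)\log\log n$, so $(c,\lambda(\rho))\in A$ becomes $(c,s)\in \hat A$ (see \eqref{eq:A_hat}); the lower bound $\rho>r_0$ corresponds to $s>n\omega_d r_0^d$, which is already enforced by $\hat A\subset\T^d\times[n\omega_d r_0^d,\infty)$, while the upper bound $\rho\le R_0$ becomes $s\le n\omega_d R_0^d$. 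Defining $C_d := \tilde C_d/(d\,\omega_d^{d+k-1})$ then gives exactly
\[
\E\{\xi_k(A)\} = C_d\, n\!\int_{\hat A} s^{d+k-2}\,e^{-s}\,\ind\{s\le n\omega_d R_0^d\}\,ds\,dc,
\]
as claimed. The only subtle step is keeping the BP angular factor and the Jacobian of the $\rho\to s$ substitution consistent so that the factor $n$ (rather than some other power) appears in front; aside from bookkeeping, no new ideas beyond those already used for the expected-value part of Proposition \ref{prop:gen_exp_var_Fkr_gen} are needed.
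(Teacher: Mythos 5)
Your proof is correct and follows essentially the same route as the paper: Mecke's formula to pass to the expectation of one configuration, conditioning on $\cX'$ and using the Poisson void probability (equation \eqref{eq:g_con_exp}), the Blaschke--Petkantschin change of variables to decouple the center $c$ from the radial and angular data, and finally the substitution $s = n\omega_d\rho^d$ to land on $\hat A$. One small remark: your bookkeeping carries the $1/((d+1)!(k-1)!)$ and $\omega_d^{k-1}$ prefactors into $\tilde C_d$, which is more careful than the paper's displayed formulas (the paper's equation \eqref{eq:LA} and definition \eqref{eq:C_d} silently drop these combinatorial factors); your tracking is the consistent one, and the discrepancy only affects the explicit value of the constant $C_d$, not the structure or validity of the argument.
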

\begin{proof}
Using Mecke's formula and \eqref{eq:g_con_exp}, we have
\begin{equation*}
\begin{split}
\mathbf{L}(A) 
&
= 
\frac{n^{d+1}}{(d+1)!}\int_{\left(\T^d\right)^{d+1}}\ind\{(c(\bx),\lambda(\bx))\in A\}\E\{g_{r_0}^{k-1}(\bx,\cP_n\cup\bx)\ \}
d\bx    
\\
&
=
\frac{n^{d+1}}{(d+1)!}\int_{\left(\T^d\right)^{d+1}}\ind\{(c(\bx),\lambda(\bx))\in A\}
h_{r_0}(\bx)\frac{\param{n\omega_d\rho(\bx)^d}^{k-1}}{(k-1)!} e^{-n\omega_d\rho(\bx)^d}
d\bx. 
\end{split}    
\end{equation*}
Applying the Blaschke-Petkanschin formula (Lemma \ref{lem:bp})
we have
\begin{equation}\label{eq:LA}
\begin{split}
\mathbf{L}(A) 
&
= 
D_{\mathrm{bp}}
n^{d+k}
\int_{\T^d}
\int_{0}^{\infty}
\ind\{(c,\lambda(\rho))\in A\}\ind\{\rho\in(r_0,R_0]\}\rho^{d(d+k-1)-1}e^{-n\omega_d\rho^d}
d\rho dc
\\
&
\times
\int_{\left(\mathbb{S}^{d-1}\right)^{d+1}}
\hcrit(\bs{\theta}) \vsimp(\bs{\theta})
d\bs{\theta},
\end{split}    
\end{equation}
where, abusing notations, we set 
\[
\lambda(\rho)\coloneqq n\omega_d\rho^d-\log n - (d+k-2)\log\log n.
\]
Taking the change of variables $n\omega_d\rho^d\rightarrow s$ we have
\begin{equation*}
\begin{split}
\int_{\T^d}
\int_{r_0}^{R_0}
\ind\{(c,\lambda(\rho))\in A\}
&
\rho^{d(d+k-1)-1} 
e^{-n\omega_d\rho^d}
d\rho dc
\\
&
=
\frac{1}{d(n\omega_d)^{d+k-1}}
\int_{\T^d}
\int_{n\omega_d r_0^d}^{n\omega_d R_0^d}
\ind\{(c,s)\in \hat{A}\}
s^{d+k-2}
e^{-s}
d s dc
\\
&
=
\frac{1}{d(n\omega_d)^{d+k-1}}
\int_{\hat{A}}
s^{d+k-2}
e^{-s}
\ind\{s\leq n\omega_d R_0^d\}
ds dc
\end{split}
\end{equation*}
(the indicator does not specify the lower bound since $A\subset \mathbb{Y}$).
Defining 
\begin{equation}\label{eq:C_d}
C_d\coloneqq \frac{D_{\mathrm{bp}}}{d\omega_d^{d+k-1}}\int_{\left(\mathbb{S}^{d-1}\right)^{d+1}}
\hcrit(\bs{\theta}) \vsimp(\bs{\theta})
d\bs{\theta}, 
\end{equation}
and putting all back into \eqref{eq:LA}
completes the proof.
\end{proof}

\begin{proof}[Proof of Theorem \ref{thm:Poisson_lim}] 
The proof consists of  similar steps to the proof of Theorem 6.2 in \cite{bobrowski2022PoissonProcessApprox}.
Using Theorem 4.1 in \cite{bobrowski2022PoissonProcessApprox} to our setting, we have
\begin{equation}\label{eq:Dkr}
\begin{split}
d_{\mathrm{KR}}(\xi_k,\zeta_k) \leq & d_{\mathrm{TV}}(\mathbf{L},\mathbf{M})
\\
&
+ 2 \{\mathrm{Var}(\xi_k(\mathbb{Y}))-\E(\xi_k(\mathbb{Y}))\}
\\
&
+ \left(\frac{2n^{d+1}}{(d+1)!}\right)^2\int_{(\T^d)^{d+1}}\int_{(\T^d)^{d+1}}
\ind\{\cB(\bx)\cap \cB(\zint)\neq\emptyset\}
\\
&
\times 
\E\{g_{r_0}^{k-1}(\bx,\cP_n\cup \bx)\}\E\{g_{r_0}^{k-1}(\zint,\cP_n\cup\zint)\}
d\zint d\bx,
\end{split}
\end{equation}
where $\cB(\bx)$ denotes the $d$-dimensional ball centered at $c(\bx)$ with radius $\rho(\bx)$ (similarly for $\cB(\zint)$). 
Our goal is to bound the right hand side.

For the first term, recall that $\zeta_k$ is a Poisson process on $\mathbb{Y}$ with intensity $C_d e^{-\lambda}d\lambda d c$. Hence,
\[
\mathbf{M}(A) = \E\{\zeta_k(A)\} = C_d \int_A e^{-\lambda}d\lambda d c.
\]
By Lemma \ref{lem:L} we have
\[
\mathbf{L}(A) = \E\{\xi_k(A)\}=C_d n
\int_{\hat{A}}
s^{d+k-2}
e^{-s}
\ind\{s\leq n\omega_d R_0^d\}
d s dc,
\]
where $\hat{A}$ is defined in \eqref{eq:A_hat}.
Defining $I_n(\lambda)\coloneqq\ind\{\log n+(d+k-2)\log\log n +\lambda\leq n\omega_d R_0^d\}$, we have
\[
\begin{split}
\mathbf{L}(A)
&
=
C_d n
\int_{A}
(\log n+(d+k-2)\log\log n +\lambda)^{d+k-2}
e^{-(\log n+(d+k-2)\log\log n +\lambda)}
I_n(\lambda)
d\lambda dc
\\
&
=
C_d
\int_{A}
\left(1+\frac{(d+k-2)\log\log n +\lambda}{\log n}\right)^{d+k-2}
e^{-\lambda}
I_n(\lambda)
d\lambda dc.
\end{split}    
\]
If $n$ is large enough, then $\lambda  \ge \lambda_0 >-\log\log n$. Hence,
\begin{equation*}
\begin{split}
& 
|\mathbf{L}(A) - \mathbf{M}(A)| 
\\
&
=
C_d
\left|\int_A\left(I_n(\lambda)
\left(1+\frac{(d+k-2)\log\log n +\lambda}{\log n}\right)^{d+k-2} - 1\right)e^{-\lambda}
d\lambda dc
\right|
\\
&
\leq
C_d
\sum_{i=1}^{d+k-2}\binom{d+k-2}{i}\int_A
\left(\frac{(d+k-2)\log\log n +\lambda}{\log n}\right)^{i}e^{-\lambda}
d\lambda dc
+
C_d\int_{\R}\left(1-I_n(\lambda)\right)e^{-\lambda}d\lambda
\\
&
=
C_d
\sum_{i=1}^{d+k-2}\sum_{j=0}^{i}\binom{d+k-2}{i}\binom{i}{j}
\frac{\left((d+k-2)\log\log n\right)^{i-j}}{(\log n)^i}
\int_A \lambda^{j}e^{-\lambda}
d\lambda dc
\\
& 
\quad
+
C_d e^{-\omega_d n R_0^d +\log n + (d+k-2)\log\log n}
\\
&
=
O\left(\frac{\log\log n}{\log n}\right),
\end{split}    
\end{equation*}
since $\int_{\R_0}\lambda_j e^{-j}d\lambda\leq\infty$ and $R_0=\sqrt{r_0}$.

For the second term in \eqref{eq:Dkr}, note that $\xi_k(\mathbb{Y})$ is the number of critical points of index $d$, with $\rho\ge r_0$. Therefore, using Proposition \ref{prop:gen_exp_var_Fkr_gen} we have 
\[
|\mathrm{Var}(\xi_k(\mathbb{Y}))-\E\{\xi_k(\mathbb{Y})\}|
=
O\left((\log\log n)^{d+k}(\log n)^{-\frac{1}{2(d+k)}}\right).
\]
In particular, $\mathrm{Var}(\xi_k(\mathbb{Y}))-\E\{\xi_k(\mathbb{Y})\}\rightarrow 0$ as $n\rightarrow\infty$.

We are left with last term of \eqref{eq:Dkr}. Note that $\cB(\bx)\cap \cB(\zint)\neq\emptyset$ implies $|c(\bx)-c(\zint)|\leq \rho(\bx)-\rho(\zint)\leq 2R_0$. Hence, fixing $\bx$ we have
\begin{equation*}
\begin{split}
\frac{n^{d+1}}{(d+1)!} & \int_{(\T^d)^{d+1}}
\ind\{\cB(\bx)\cap \cB(\zint)\neq\emptyset\}\E\{g^{k-1}_{r_0}(\zint,\cP_n\cup\zint)\}
d\zint
\\
&
\leq
\frac{n^{d+1}}{(d+1)!}
\int_{(\T^d)^{d+1}}
\ind\{c(\zint)\in B_{2R_0}(c(\bx))\}\E\{g_{r_0}^{k-1}(\zint,\cP_n\cup\zint)\}
d\zint
\\
&
= 
\mathbf{L}(B_{2R_0}(c(\bx))\times\R_0) 
=
\vol(B_{2R_0}(c(\bx)))\mathbf{L}(\T^d\times\R_0)
\\
&
=
\omega_d(2R_0)^d C_de^{-\lambda_0}.
\end{split}    
\end{equation*}
Finally,
\begin{equation*}
\begin{split}
\left(\frac{n^{d+1}}{(d+1)!}\right)^2&\int_{(\T^d)^{d+1}}\int_{(\T^d)^{d+1}}
\ind\{\cB(\bx)\cap \cB(\zint)\neq\emptyset\}
\\
&
\times 
\E\{g_{r_0}^{k-1}(\bx,\cP_n\cup\bx)\}\E\{g_{r_0}^{k-1}(\zint,\cP_n\cup\zint)\}
d\zint d\bx
\\
&
\leq
\omega_d(2R_n)^d C_de^{-\lambda_0}
\frac{n^{d+1}}{(d+1)!}
\int_{(\T^d)^{d+1}}
\E\{g_{r_0}^{k-1}(\bx,\cP_n\cup\bx)\}
d\bx
\\
&
= 
O(R_0^d)
.    
\end{split}    
\end{equation*}
Since $R_0=\sqrt{r_0}$, we have $R_0^d=O\left(\sqrt{\log n/n}\right)$. Placing the three bounds we obtained into \eqref{eq:Dkr} completes the proof.
\end{proof}

\subsection{The Euler characteristic}
In this section we prove Proposition \ref{prop:euler_char}.
Note that the following proof holds only for the case $k>1$, while the case $k=1$ was already studied in \cite{bobrowski_vanishing_2017}. There are two main differences between $k=1$ and $k>1$. The first is that 
in the $k=1$  case the critical points of index $\mu=0$ (minima) are exactly $\cP_n$, and they all appear at level zero. For $k>1$ this is no longer true and critical points of index $0$ appear at random levels. The second difference is that for $k=1$, and for any $\mu>1$, each critical point of index $\mu$ introduces exactly one change in the homology. For $k>1$ the number of changes at each critical point is random, which we denoted previously by $\Delta_c$. This makes the analysis of the Euler characteristic more challenging.

Denote $\cC_r^\mu(\cP_n)$ the set of all critical points $c\in \T^d$ of index $\mu$, with critical value $\rho_c\le r$. Recall that the number of changes in homology induced by a critical point $c$ is $\Delta_c$ given by \eqref{eq:delta_c}. 
\begin{lem}\label{lem:euler_formula}
The Euler characteristic of $B_r^{(k)}$ is given by
\[
\chi(B_r^{(k)}) = \sum_{\mu=0}^{d}(-1)^{\mu}\sum_{c\in \cC^{\mu}_r(\cP_n)}\Delta_c.
\]
\end{lem}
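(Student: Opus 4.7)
The plan is to track $\chi(B_r^{(k)})$ as a function of $r$, accumulating it as a sum of jumps occurring at the critical values of $d_{\cP_n}^{(k)}$ in $[0,r]$. The key inputs are (a) the fact that between consecutive critical values the homotopy type of $B_r^{(k)}$ is unchanged, and (b) the local description of homological changes at each critical point given in \cite{reani2024knn}.

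First I would establish the base case. Since $k>1$, at $r=0$ no point is covered by at least $k$ open balls of radius $0$, so $B_0^{(k)} = \emptyset$ and $\chi(B_0^{(k)}) = 0$. Between consecutive critical values of $d_{\cP_n}^{(k)}$, the piecewise-smooth Morse theory in \cite{reani2024knn} gives a homotopy equivalence of sublevel sets, so $\chi(B_r^{(k)})$ is locally constant away from critical values. Therefore, it suffices to compute the jump in $\chi$ at each critical value and sum these jumps over all critical points with $\rho_c \le r$.

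The core step is to evaluate the jump at a single critical point $c$ of index $\mu_c$. By \cite{reani2024knn}, crossing the critical value $\rho_c$ produces exactly $\Delta_c$ elementary homological events, each of which is either the birth of a new generator of $H_{\mu_c}$ or the death of a generator of $H_{\mu_c - 1}$. A birth changes $\chi$ by $(-1)^{\mu_c}$; a death of a $(\mu_c-1)$-cycle decreases $\beta_{\mu_c - 1}$ by one and so changes $\chi$ by $-(-1)^{\mu_c - 1} = (-1)^{\mu_c}$. Thus every one of the $\Delta_c$ events contributes exactly $(-1)^{\mu_c}$ to $\chi$, and the total jump at $c$ is $(-1)^{\mu_c}\Delta_c$.

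Summing these contributions over all critical points with $\rho_c \le r$ and grouping by index yields
\[
\chi(B_r^{(k)}) = \sum_{\mu=0}^{d}(-1)^{\mu}\sum_{c\in \cC^{\mu}_r(\cP_n)}\Delta_c,
\]
as claimed. The only real subtlety is that $d_{\cP_n}^{(k)}$ is only piecewise smooth, so a single critical value may correspond to several simultaneous handle attachments (that is, $\Delta_c$ can exceed $1$); this is precisely what is captured by the combinatorial formula \eqref{eq:delta_c} and the local analysis of \cite{reani2024knn}, which I would cite as a black box rather than redo. The general position assumption on $\cP_n$ ensures that distinct critical points have distinct critical values almost surely, so the jumps can be summed independently without ambiguity.
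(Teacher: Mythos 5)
Your proof is correct and follows essentially the same approach as the paper: both arguments start from $B_0^{(k)}=\emptyset$, invoke the Morse theory of \cite{reani2024knn} to localize topological change to critical values, and use the decomposition $\Delta_c = \Delta_c^+ + \Delta_c^-$ into births in dimension $\mu_c$ and deaths in dimension $\mu_c-1$. The only cosmetic difference is that you track the jump in $\chi$ directly at each critical point (observing that a birth contributes $(-1)^{\mu_c}$ and a death contributes $-(-1)^{\mu_c-1}=(-1)^{\mu_c}$, so the jump is $(-1)^{\mu_c}\Delta_c$), whereas the paper first records each Betti number as $\beta_\mu = \sum_{c\in\cC_r^\mu}\Delta_c^+ - \sum_{c\in\cC_r^{\mu+1}}\Delta_c^-$ and then recovers the same cancellation by reindexing the alternating sum; these are the same computation organized differently.
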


\begin{proof}
Note that $B_0^{(k)} = \emptyset$, and consequently $\beta_i(B_0^{(k)}) = 0$ for all $i$.
From Morse Theory, and specifically Theorem 2 in \cite{reani2024knn}, we know that increasing the radius from $0$ to $r$, the Betti numbers will only change at the critical levels.
Each critical point $c$ with $\rho_c \le r$ and index $\mu_c$, increases $\beta_{\mu_c}$ by $\Delta_c^+$ and decreases $\beta_{\mu_c-1}$ by $\Delta_c^-$, where $\Delta_c^+,\Delta_c^-\ge 0$, and $\Delta_c = \Delta_c^+ + \Delta_c^-$. Summing up these changes, we thus have
\[
    \beta_\mu(B_r^{(k)}) = \sum_{c\in \cC_r^\mu(\cP_n)} \Delta_c^+ - \sum_{c\in \cC_r^{\mu+1}(\cP_n)} \Delta_c^-.
\]
Therefore,
\[
\begin{split}
\chi(B_r^{(k)}) &= \sum_{\mu=0}^d (-1)^\mu\beta_\mu(B_r^{(k)}) 
\\ 
&= \sum_{\mu=0}^d \sum_{c\in \cC_r^\mu(\cP_n)} (-1)^\mu\Delta_c^+ + \sum_{\mu=1}^{d}\sum_{c\in \cC_r^\mu(\cP_n)} (-1)^{\mu} \Delta_c^- \\ 
&= \sum_{\mu=0}^d\sum_{c\in \cC_r^\mu(\cP_n)}  (-1)^\mu \Delta_c.
\end{split}
\]
This completes the proof.
\end{proof}

\begin{proof}[Proof of Proposition \ref{prop:euler_char}]
Applying Lemma \ref{lem:euler_formula} and taking the expectation we have
\[
\E\{\chi(B_r^{(k)})\} = \sum_{\mu=0}^{d}(-1)^{\mu}
\E\left\{\sum_{c\in \cC^{\mu}_r(\cP_n)}\Delta_c\right\}.
\]

Fix $0\le \mu \le d$, and let $\cX\subset\cP_n$, $|\cX| = m$, with $\ubar{m}_{\mu}\le m\le\bar{m}_{\mu}$ (see \eqref{eq:m_min_max}), such that  $c=c(\cX)$ is a critical point of index $\mu$. Denote $\Delta_{\mu,m}\coloneqq\Delta_{c}=\binom{m-1}{\mu}$. In addition, recall the definitions of $\hcrit(\cX)$,  and $\gcrit^{i}(\cX,\cP_n)$ in \eqref{eq:g_r}, and denote
\begin{equation*}
\begin{split}
\tilde{h}_r(\cX) & \coloneqq \hcrit(\cX)\ind\{\rho(\cX)\le r\},
\\
\tilde{g}^{i}_r(\cX,\cP_n) &\coloneqq \gcrit^{i}(\cX,\cP_n)\tilde{h}_r(\cX).
\end{split}    
\end{equation*}
Then,
\begin{equation*}
\begin{split}
\E\left\{\sum_{c\in \cC^{\mu}_r(\cP_n)}\Delta_c\right\}
&
= 
\E\left\{\sum_{m=\ubar{m}_{\mu}}^{\bar{m}_{\mu}}\sum_{\cX\in\cP_n^{(m)}} \Delta_{\mu,m} \tilde{g}_r^{\mu+k-m}(\cX,\cP_n)\right\}    
\\
&
=
\sum_{m=\ubar{m}_{\mu}}^{\bar{m}_{\mu}}\Delta_{\mu,m} 
\E\left\{\sum_{\cX\in\cP_n^{(m)}} \tilde{g}_r^{\mu+k-m}(\cX,\cP_n)\right\}.
\end{split}    
\end{equation*}

Following similar calculations as in the proof of Proposition \ref{prop:gen_exp_var_Fkr_gen}, we  have
\begin{equation*}
\begin{split}
\E\left\{\chi(B_r^{(k)})\right\}
&
=
n\sum_{\mu=0}^{d}(-1)^{\mu}\gamma(\mu+k-1,\Lambda)
\sum_{m=\ubar{m}_{\mu}}^{\bar{m}_{\mu}}\tilde{C}_{\mu,m}\frac{\Delta_{\mu,m}}{m!}
\\
&
=
n\sum_{\mu=0}^{d}(-1)^{\mu}\tilde{C}_{\mu}\gamma(\mu+k-1,\Lambda),
\end{split}
\end{equation*}
where $\tilde C_{\mu,m}$ was defined in \eqref{eqn:tC_mu_m}, $\tilde{C}_{\mu}\coloneqq
\sum_{m=\ubar{m}_{\mu}}^{\bar{m}_{\mu}}\tilde{C}_{\mu,m}\frac{\Delta_{\mu,m}}{m!}$, and $\gamma(p,t) = \int_0^t z^{p-1}e^{-z}dz $ is 
the \emph{lower incomplete gamma function}.
Denoting $D_{\mu} \coloneqq \tilde{C}_{\mu}(\mu+k-2)!$, and using the fact that $\gamma(p,t) = (p-1)!(1-e^{-t}\sum_{i=0}^{p-1} t^i / i!)$ (for an integer $p$), 
we have 
\begin{equation*}
\begin{split}
\E\left\{\chi(B_r^{(k)})\right\}
&
=
n\sum_{\mu=0}^{d}(-1)^{\mu}D_{\mu}\left(1-e^{-\Lambda}\sum_{i=0}^{\mu+k-2}\frac{\Lambda^{i}}{i!}\right)
\\
&
=
n\sum_{\mu=0}^{d}(-1)^{\mu}D_{\mu}-ne^{-\Lambda}\sum_{\mu=0}^{d}(-1)^{\mu}D_{\mu}\sum_{i=0}^{\mu+k-2}\frac{\Lambda^{i}}{i!}
\\
&
=
n\sum_{\mu=0}^{d}(-1)^{\mu}D_{\mu}-ne^{-\Lambda}
\left[
\sum_{\mu=0}^{d}(-1)^{\mu}D_{\mu}\sum_{i=0}^{k-2}\frac{\Lambda^{i}}{i!}
+
\sum_{\mu=0}^{d}(-1)^{\mu}D_{\mu}\sum_{i=k-1}^{\mu+k-2}\frac{\Lambda^{i}}{i!}
\right]
\\
&
=
n\sum_{\mu=0}^{d}(-1)^{\mu}D_{\mu}
-
ne^{-\Lambda}
\left[
\sum_{i=0}^{k-2}\sum_{\mu=0}^{d}(-1)^{\mu}D_{\mu}\frac{\Lambda^i}{i!}
+
\sum_{i=k-1}^{d+k-2}\sum_{\mu=i-k+2}^{d}(-1)^{\mu}D_{\mu}\frac{\Lambda^{i}}{i!}
\right].
\end{split}
\end{equation*}
Setting
\begin{equation}\label{eq:euler_consts}
A_i \coloneqq 
\begin{cases}
\frac{1}{i!}\sum_{\mu=0}^{d}(-1)^{\mu-1}D_{\mu} & 0\leq i\leq k-2, \\ 
\frac{1}{i!}\sum_{\mu=i-k+2}^{d}(-1)^{\mu-1}D_{\mu} & k-1\leq i\leq d+k-2,  
\end{cases}    
\end{equation}
we have
\[
\E\{\chi(B_r^{(k)})\}
=
-nA_0
+
ne^{-\Lambda}
\sum_{i=0}^{d+k-2}A_i\Lambda^i.
\]
Next, fixing $r>0$ and taking $n\rightarrow\infty$ ($\Lambda$ much larger than the coverage threshold), we have
\[
\lim_{n\rightarrow\infty}\E\left\{\chi(B_r^{(k)})\right\}
=
\chi(\T^d)=0.
\]
This implies that $A_0=0$.
\end{proof}

\subsection{Homological connectivity for the $k$-fold cover}
Recall the definition of $\cH_{i,r}^{(k)}$  \eqref{eq:hom_con_event} -- the event where the $i$-th homology of $B_r^{(k)}$ ``stabilizes'' (i.e., no further changes occur for increasing values of $r$), and becomes isomorphic to $H_i(\T^d)$. In this section we prove Corollary \ref{cor:hom_con} regarding the emergence of $\cH_{i,r}^{(k)}$.
 
\begin{proof}[Proof of Corollary \ref{cor:hom_con}]
 We start with the cases $i=d-1,d$.
Above the threshold, namely, $\Lambda=\log n + (d+k-2)\log\log n + w(n)$, we have  from Theorem \ref{thm:coverage_Td} that $\T^d$ is $k$-covered with high probability.  In other words, $B_r^{(k)} = \T^d$, and therefore, both $\cH_{d-1,r}^{(k)},\cH_{d,r}^{(k)}$ occur. Below the threshold, 
when $\Lambda=\log n + (d+k-2)\log\log n - w(n)$, we have that $\T^d$ is not $k$-covered with high probability,  
and therefore, $\cH_{d,r}^{(k)}$ does not hold. To show that $\cH_{d-1,r}^{(k)}$ does not hold as well, we use the following insight.

Note that from \eqref{eq:I_and_mu}, for $\mu=d$, since $\cI(\cX,\cP)\le k-1$, and $|\cX|\le d+1$, we must have $|\cX| = d+1$ and $\cI(\cX,\cP) = k-1$.
Using \eqref{eq:delta_c}, we have that $\Delta_c = \binom{d}{\mu} = 1$. 
In other words, a critical point of index $d$ induces a single change in the homology. This change can either be positive (generating a $d$-cycle) or negative (eliminating a $(d-1)$-cycle). Since the $k$-coverage process $B_r^{(k)}$ is a subset of $\T^d$, only one $d$-cycle can be generated, and this occurs when we reach $k$-coverage. This implies that all critical points of index $d$, except the last one (the global maximum), must eliminate a $(d-1)$-cycle. Therefore, if $N^{k}_{d,r}>1$, we can conclude that there is still a $(d-1)$-cycle waiting to be eliminated, implying that $\cH_{d-1,r}^{(k)}$ has not occurred yet.

Similarly to the proof of Theorem \ref{thm:prob_Fkr_cov} we have
\[
\prob(\Ncrit{d}\le 1) \leq \prob(|\Ncrit{d} - \E\{\Ncrit{d} \}| > \E\{\Ncrit{d} \} - 1)\leq \frac{\mathrm{Var}(\Ncrit{d})}{(\E\{\Ncrit{d}\} -  1)^2} 
\approx
\frac{\mathrm{Var}(\Ncrit{d})}{\E\{\Ncrit{d}\}^2},
\]
and by Lemma \ref{prop:gen_exp_var_Fkr_gen} the last term goes to $0$ as $n\rightarrow\infty$,  completing the proof.

For $0\le i \le d-2$, we note that if  $\Ncrit{i}=\Ncrit{i+1} = 0$, then no more changes can occur in $H_i(B_r^{(k)})$, implying that $\cH_{i,r}^{(k)}$ holds. From Theorem \ref{thm:prob_Fkr_cov} we have that when $\Lambda = \log n + (i+k-1)\log\log n + w(n)$, then $\prob({\Ncrit{i}= \Ncrit{i+1} =0}) \to 1$, completing the proof.
\end{proof}

\subsection{The limiting variance of \headermath{\Ncrit{d}}}\label{sec:var_proof}

The last thing we are left to prove is the asymptotic result for  $\mathrm{Var}(N^{k}_{\mu,r})$ in Proposition \ref{prop:gen_exp_var_Fkr_gen}, namely, that 
$\mathrm{Var}(\Ncrit{\mu})\approx \E(\Ncrit{\mu})$, as well as the convergence rate for $\mu=d$.
For simplicity, we show this only for $\mu=d$. All other cases $0\leq\mu<d$ follow similar calculations, but require more special cases which we review in the end. 

\begin{remark}
While the main steps are similar to \cite{bobrowski2022homological}, the $k$-NN case contains significant challenges.
A key difference between \cite{bobrowski2022homological} and here, is that the calculations in \cite{bobrowski2022homological} use the fact that for any two critical configuration $\cX_1,\cX_2\subset\cP_n$, the open ball $\cB(\cX_1)$ does not intersect $\cX_2$, and $\cB(\cX_2)$ does not intersect $\cX_1$, since for $k=1$ we must have $\cI(\cX_1,\cP_n)=\cI(\cX_2,\cP_n)=0$. This fact significantly limits the possible configurations and simplifies the calculations. However, for $k>1$, this fact no longer holds, since $\cI(\cX_1,\cP_n),\cI(\cX_2,\cP_n)\geq 0$, posing significant new challenges to the moment estimation.  
\end{remark}

\begin{proof}[Proof of Proposition \ref{prop:gen_exp_var_Fkr_gen} -- second moment]
To simplify the notation we denote $N:=\Ncrit{d}$, and we use the symbol $C^*$ to represents any constant value that does not depend on $n$, and its value can change throughout the proof.

Recall the definitions of $h_r(\cX)$, and $g_r^{i}(\cX,\cP_n)$ from \eqref{eq:grk}. In addition, note that by \eqref{eq:I_and_mu} a critical configuration $\cX\subset\cP_n$, inducing an index $\mu=d$ critical point, must have $|\cX|=d+1$ and $\cI(\cX,\cP_n)=k-1$. 
Therefore,
\begin{equation}\label{eq:second_moment}
    \begin{split}
        \E\curparam{N^2}
        & = 
        \E\curparam{\sum_{\cX_1,\cX_2\in\cP_{n}^{(d+1)}} g_{r}^{k-1}(\cX_1, \cP_n)g_{r}^{k-1}(\cX_2, \cP_n)}
        \\
        &
        = 
        \sum_{j=0}^{d+1}\E\curparam{\sum_{\substack{\cX_1,\cX_2\in\cP_n^{(d+1)} \\ 
        |\cX_1\cap\cX_2|=j}} g_{r}^{k-1}(\cX_1, \cP_n)g_{r}^{k-1}(\cX_2, \cP_n)}
        \\
        &
        =
        \sum_{j=0}^{d+1}I_j,
    \end{split}
\end{equation}
where $\cP_{n}^{(d+1)}$ denotes all the subsets of $\cP_n$ of size $(d+1)$, 
and $I_j$ denotes the $j$-th term in the sum. Note that $I_{d+1}=\E\curparam{N}$, and therefore,
\begin{equation}\label{eq:var_explicit}
\mathrm{Var}(N) = 
\E\{N^2\} - \E\{N\}^2
=
\E\{N\} + \sum_{j=1}^{d}I_j + (I_0-\E\{N\}^2).
\end{equation}

Next, let $1\leq j\leq d$. By Definition \ref{def:crit_pnt}, each ball $\cB(\cX_1)$, $\cB(\cX_2)$ contains exactly $(k-1)$ points in its interior. 
Using Mecke's formula (Theorem \ref{cor:palm}), we have
\begin{equation}\label{eq:I_j_palm}
I_j=     
\frac{n^{2(d+1)-j}}{j!((d+1-j)!)^2}
\E\curparam{g_{r}^{k-1}(\cX'_1, \cP_n\cup\cX')g_{r}^{k-1}(\cX'_2, \cP_n\cup\cX')},
\end{equation}
where $\cX'=\cX_1'\cup\cX_2'$ is a set of $(2(d+1)-j)$ i.i.d.~random variables uniformly distributed in $\T^d$, independent of $\cP_n$.  
Taking the expected value conditioned on $\cX'$, we get
\begin{equation*}  
\begin{split}
\E\{g_{r}^{k-1}&(\cX_1, \cP_n\cup\cX')  g_{r}^{k-1}(\cX_2, \cP_n\cup\cX') | \cX' =\bx \} 
\\
&
=
h_r(\bx_1)h_r(\bx_2)\prob\left(\cI(\bx_1,\cP_n\cup\bx)=k-1,\cI(\bx_2,\cP_n\cup\bx)=k-1\right)
,
\end{split}
\end{equation*}
where 
\begin{equation}\label{eq:x1x2x12}
\begin{split}
\bx & = (x_1,\ldots,x_{2(d+1)-j})\subset(\T^d)^{2(d+1)-j},
\\
\bx_1 & = (x_1,\ldots,x_{d+1})
\\
\bx_2 & = (x_1,\ldots,x_j,x_{d+2},\ldots,x_{2(d+1)-j}),    
\end{split}   
\end{equation} 
i.e., $\bx_1$ and $\bx_2$ share the first $j$ coordinates.
We denote 
\[
p(\bx) \coloneqq 
\prob\left(\cI(\bx_1,\cP_n\cup\bx)=k-1,\cI(\bx_2,\cP_n
\cup\bx)=k-1 \right),
\]
and therefore, we have
\[
    I_j =    
    C^* n^{2(d+1)-j}
    \int_{\cA_j}
    p(\bx)
    d\bx,
\]
where $\cA_j = \curparam{\bx\in (\T^{d})^{2(d+1)-j} :  h_r(\bx_1)h_r(\bx_2) = 1}$. By the symmetry between $\bx_1,\bx_2$, we have
\[
\int_{\cA_j}
    p(\bx)
    d\bx
    =
    2\int_{\cA_j'}
    p(\bx)
    d\bx
    ,
\]
where $\cA_j'=\{\bx\in\cA_j:\rho(\bx_1)\geq\rho(\bx_2)\}$.

Next, we write $p(\bx)$ explicitly.
Denote the following volumes (see Figure \ref{fig:notations}), 
\begin{equation*}
V_1(\bx) \coloneqq \vol(\cB(\bx_1){\setminus}\cB(\bx_2)),
\quad
V_2(\bx) \coloneqq \vol(\cB(\bx_2){\setminus}\cB(\bx_1)),
\quad
V_{12}(\bx) \coloneqq \vol(\cB(\bx_1){\cap}\cB(\bx_2)).
\end{equation*}
In addition, we use the following notations to count the points lying in each ball $\cB(\bx_1),\cB(\bx_2)$ (see Figure \ref{fig:notations}),
\begin{equation}\label{eq:m1_m2}
m_1 = |\cB(\bx_1)\cap\bx_2|,
\quad 
m_2 = |\cB(\bx_2)\cap\bx_1|,
\end{equation}
and
\begin{equation}\label{eq:k1_k2_k12}
k_1=|\cP_n\cap \cB(\bx_1)\setminus\cB(\bx_2)|,\quad
k_2=|\cP_n\cap \cB(\bx_2)\setminus\cB(\bx_1)|,\quad
k_{12} = |\cP_n\cap\cB(\bx_1)\cap\cB(\bx_2)|.    
\end{equation}

The values above satisfy the following,
\begin{equation}\label{eq:ks}
m_1 + k_1+ k_{12} = m_2 + k_2+k_{12}=k-1.   
\end{equation}
Note that $0\leq k_i\leq K_{i}\coloneqq k-1-m_i$, for $i\in\{1,2\}$, and $0\leq k_{12}\leq K_{12}\coloneqq k-1-\max_{i\in\{1,2\}}(m_i+k_i)$.

\begin{figure}
    \centering
    \includegraphics[width=0.5\linewidth]{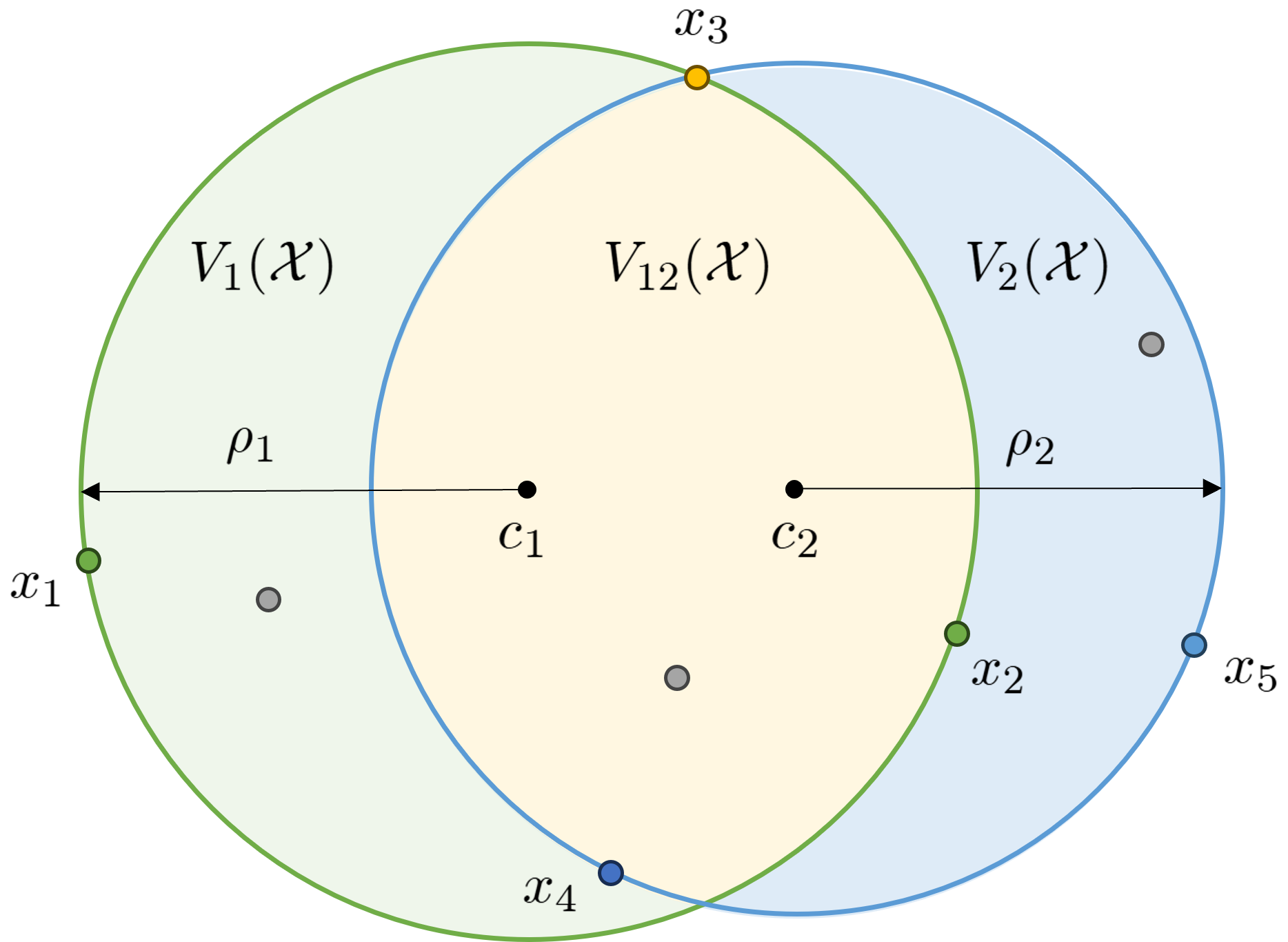}
    \caption{A configuration of two critical points of index $\mu=2$ in $\R^2$ for $k=4$. The set $\cX=\{x_1,\ldots,x_5\}$ induces two critical points, generated by $\cX_1=\{x_1,x_2,x_3\}$ and $\cX_2=\{x_3,x_4,x_5\}$. Here $k_1=1$, $k_2=1$, $k_{12}=1$, $m_1=1$ and $m_2=1$. In addition, $j=1$ since $\cX_1$ and $\cX_2$ share the point $x_3$. The volume $V_1(\cX)$ is green, $V_2(\cX)$ is blue, and $V_{12}(\cX)$ is yellow.}
    \label{fig:notations}
\end{figure}

Using the definitions above, we can write
\[
p(\bx) 
= 
\sum_{k_1=0}^{K_1}\sum_{k_2=0}^{K_2}
\sum_{k_{12}=0}^{K_{12}}
n^{k_1+k_2+k_{12}}
\frac{V_1(\bx)^{k_1}V_2(\bx)^{k_2}V_{12}(\bx)^{k_{12}}}{k_1!k_2!k_{12}!}e^{-n\vuni(\bx_1,\bx_2)}.    
\]

Next, define $\bx_{12}=(x_1,\ldots,x_j)$ and
\begin{equation}
\begin{split}
c_i & = c(\bx_1),\quad \rho_i=\rho(\bx_i),\quad i=1,2,
\\
c_{12} & = c(\bx_{12}),\quad \rho_{12} = \rho(\bx_{12}).
\end{split}    
\end{equation}
We split the integral into three regions, so that
\begin{equation}\label{eq:I_j}
I_j = C^* n^{2(d+1)-j}
    (I_j^{(1)} + I_j^{(2)} + I_j^{(3)}),
\end{equation}
where
\begin{equation}\label{eq:I_j_split}
\begin{split}
    I_j^{(1)} & \coloneqq \int_{\cA_j'} 
    p(\bx)\ind\{|c_1-c_{12}|\leq\epsilon_j\rho_1\} 
    d\bx
    , 
    \\
    I_j^{(2)} & \coloneqq \int_{\cA_j'} 
    p(\bx)\ind\{|c_1-c_{12}|>\epsilon_j\rho_1, |c_1-c_2|\leq\delta_j\rho_1\} 
    d\bx,
    \\
    I_j^{(3)} & \coloneqq \int_{\cA_j'} 
    p(\bx)\ind\{|c_1-c_{12}|>\epsilon_j\rho_1, |c_1-c_2|>\delta_j\rho_1\}
    d\bx,
\end{split}
\end{equation}
where $\epsilon_j$ and $\delta_j$ are chosen later such that both goes to zero as $n\rightarrow\infty$, and $\delta_j=o(\epsilon_j)$.

\vspace{5pt}
\noindent\textbf{Bounding $I_j^{(1)}$.}\label{par:I_j^(1)}
To evaluate $I_j^{(1)}$, we use the partial BP formula for $\bx_2{\setminus}\bx_1$ (Lemma \ref{lem:bp_partial}).
\begin{equation*}
    \begin{split}
        I_j^{(1)} 
        &
        =
        \sum_{k_{1},k_{2},k_{12}}
        \frac{d!n^{k_1+k_2+k_{12}}}{k_1!k_2!k_{12}!(j-1)!}
        \int_{\bx_1} d\bx_1
        \int_{c_2} dc_2
        \int_{\Pi_0^{\perp}} d\Pi_0^{\perp}
        \int_{\hat{\bs{\theta}}_2} d\hat{\bs{\theta}}_2
        \\
        &
        \times 
        h_r(\bx_1)h_r(\bx_2)
        \ind\curparam{\rho_2\leq\rho_1, |c_1 - c_{12}|\leq \epsilon_j\rho_1}
        \\
        &
        \times
        \rho_2^{(d-1)(d+1-j)}\left(\frac{\vsimp(\bs{\theta}_2)}{\vsimp(\bs{\theta}_{12})}\right)
        V_1(\bx)^{k_1}V_2(\bx)^{k_2}V_{12}(\bx)^{k_{12}} e^{-nV_{\mathrm{uni}}(\bx_1, \bx_2)},
    \end{split}
\end{equation*}
where  $\bs{\theta}_2,\bs{\theta}_{12}$ are the spherical coordinates for $\bx_2,\bx_{12}$ \eqref{eq:x1x2x12}, respectively, $\hat{\bs{\theta}}_2=\bs{\theta}_2{\setminus}\bs{\theta}_{12}$, and $V_{\mathrm{uni}}(\bx_1, \bx_2)=\vol(B(\bx_1){\cup}B(\bx_2))$. 
Since $\rho_2\leq \rho_1$, we have  $|c_2-c_{12}|\leq|c_1-c_{12}|\leq \epsilon_j\rho_1.$ 
Hence, since the integrand does not depend on $c_2$, the integral over $c_2$ is merely the volume of a ball of radius $\epsilon_j\rho_1$ 
in the plane $\Pi^{\perp}\cong\R^{d+1-j}$, namely, $\omega_{d+1-j}(\epsilon_j\rho_1)^{d+1-j}$.
Moreover, by the triangle inequality we have $|c_1-c_2|\leq |c_1-c_{12}| + |c_2-c_{12}|$, which implies 
$|c_1-c_2|\leq 2\epsilon_j\rho_1$.
Therefore, $\rho_2 - (\rho_1-|c_1-c_2|)\leq 2\epsilon_j\rho_1$.
Thus, we can bound $V_2(\bx)$ by
\[
V_2(\bx)\leq \frac{\kappa_d}{2}\rho_1^{d-1}\cdot\epsilon_j\rho_1 = \epsilon_j \kappa_d\rho_1^{d}.
\]
where $\kappa_{d}$ is the surface area of the $d$-dimensional unit ball.
In addition, we bound $V_1(\bx),V_{12}(\bx)$ from above by the volume of a $d$-ball, $\omega_d\rho_1^{d}$, and we apply the bounds $\rho_2\leq\rho_1$, $\vsimp(\bs{\theta}_2)/\vsimp(\bs{\theta}_{12})\leq 1$. 
This yields,
\begin{equation*}
    \begin{split}
        I_j^{(1)} 
        &
        \leq
        \sum_{k_{1},k_{2},k_{12}}
        C^* \epsilon_j^{d+1-j+k_2}n^{k_1+k_2+k_{12}}
        \int_{\bx_1} d\bx_1
        \ind\curparam{
        |c_1 - c_{12}|\leq \epsilon_j\rho_1}
        \\
        &
        \times
        h_r(\bx_1)\rho_1^{d(d+1-j)+d(k_1+k_2+k_{12})}
        e^{-n\omega_d\rho_1^d}.
    \end{split}
\end{equation*}

Next, let $\phi(\bx_1)$ denote the normalized distance between $c(\bx_1)$ to the center of the closest $(d-1)$-face of $\sigma(\bx_1)$, namely,
\[
\phi(\bx) = \frac{1}{\rho_1}\min_{\hat{\bx}_i}|c_1 -c(\hat{\bx}_i)|,
\]
where $\hat{\bx}_i\coloneqq(x_1,\ldots,x_{i-1},x_{i+1},\ldots,x_{d+1})\subset(\T^{d})^{d}$, and $c(\hat{\bx}_i)$ is the center of the minimal $(d-1)$-sphere containing $\hat{\bx}_i$. Note that $c(\hat{\bx}_i)$ is included in the $(d-1)$-plane that contains $\sigma(\hat{\bx}_i)$.
Having $|c_1-c_{12}|\leq \epsilon_j\rho_1$ implies $\phi(\bx_1)\leq \epsilon_j$. Hence,
\begin{equation*}
    \begin{split}
        I_j^{(1)}
        &
        \leq
        \sum_{k_{1},k_{2},k_{12}}C^*\epsilon_j^{d+1-j+k_2}
         n^{k_1+k_2+k_{12}}
        \int_{\bx_1} d\bx_1 \ind\{\phi(\bx_1)\leq\epsilon_j\}\\
        &\times h_r(\bx_1) \rho_1^{d(d+1-j+k_1+k_2+k_{12})}
        e^{-n\omega_d\rho_1^d}.
    \end{split}
\end{equation*}

Next, we switch to spherical coordinate, namely, $\bx_1\rightarrow c_1+\rho_1\bs{\theta}_1$. We denote $\phi(\bs{\theta}_1)\coloneqq\phi(\bx_1)$, since by definition $\phi(\bx_1)$ depends only on $\bs{\theta}_1$. Hence, using the BP formulae for $\bx_1$,
\begin{equation*}
    \begin{split}
        I_j^{(1)} 
        &
        \leq
        \sum_{k_{1},k_{2},k_{12}}
        C^*\epsilon_j^{d+1-j+k_2}
        n^{k_1+k_2+k_{12}}
        \int_{r}^{\rmax} \rho_1^{d^2-1} 
        \rho_1^{d(d+1-j + k_1+k_2+k_{12})}
        e^{-n\omega_d\rho_1^d} d\rho_1
        \\
        &
        \times
        \int_{(\mathbb{S}^{d-1})^{d+1}}\ind\{\phi(\bs{\theta}_1)\leq\epsilon_j\}
        d\bs{\theta}_1.
\end{split}
\end{equation*}
By Corollary 5.5 in \cite{bobrowski2022homological}, the last integral is Lipschitz in $(0,\epsilon_j]$, and therefore, it is $O(\epsilon_j)$. For the first integral we apply Lemma \ref{lem:gamma}. Hence,
\begin{equation*}
\begin{split}   
        I_j^{(1)} 
        &
        \leq
        \sum_{k_{1},k_{2},k_{12}}
        C^*\epsilon_j^{d+2-j+k_2}
        n^{-(2d + 1 - j)}\Lambda^{2d -j + k_1+k_2+k_{12}}e^{-\Lambda}
        \\
        &
        =
           C^*\epsilon_j^{d+2-j}
        n^{-(2d + 1 - j)}\Lambda^{2d -j}e^{-\Lambda}
        \sum_{k_{1},k_{2},k_{12}}
     (\epsilon_j\Lambda)^{k_2}\Lambda^{k_1+k_{12}}
        .
    \end{split}
\end{equation*}
Since later we choose $\epsilon_j$ such that $\epsilon_j\Lambda\rightarrow\infty$, we bound the last term from above by setting $k_{1}=k_{2}=k-1$ (implying $k_{12}=0$), and therefore,
\begin{equation}\label{eq:I_j^(1)}
I_j^{(1)} \leq         
C^*\epsilon_j^{d+1-j+k}
n^{-(2d + 1 - j)}\Lambda^{2d -j + 2k-2}e^{-\Lambda}.    
\end{equation}

\noindent\textbf{Bounding $I_j^{(2)}$.}
Similarly to the previous case, since $\rho_1-\rho_2\leq |c_1-c_2|\leq \delta_j\rho_1$, we can bound $V_2(\bx)$ from above by 
$V_2(\bx)\leq\frac{\kappa_d}{2}\delta_j\rho_1^d.$
Hence, similarly to the steps taken above, we have
\begin{equation*}
    \begin{split}
        I_j^{(2)} 
        &
        \leq
        \sum_{k_{1},k_{2},k_{12}}
        C^* \delta_j^{k_2}n^{k_1+k_2+k_{12}}
        \int_{\bx_1} d\bx_1
        h_r(\bx_1)\rho_1^{(d-1)(d+1-j)+d(k_1+k_2+k_{12})} e^{-n\omega_d\rho_1^d}
        \\
        &
        \times 
        \int_{c_2} dc_2
        \ind\{|c_1-c_2|\leq\delta_j\rho_1\}
        ,
    \end{split}
\end{equation*}
The last integral is merely the volume of a ball of radius $\delta_j\rho_1$ 
in the plane $\Pi^{\perp}\cong\R^{d+1-j}$.
Therefore,
\begin{equation}\label{eq:I_j^(2)}
    \begin{split}
        I_j^{(2)} 
        &
        \leq
        \sum_{k_1,k_2,k_{12}}
        C^*\delta_j^{d+1-j+k_2}n^{k_1+k_2+k_{12}}
        \int_{\bx_1} d\bx_1
        h_r(\bx_1)
        \rho_1^{d(d+1-j+k_1+k_2+k_{12})}
        e^{-n\omega_d\rho_1^d}.
    \end{split}
\end{equation}
To evaluate the last upper bound, we split it into two subcases depending on  $k_1$.

\vspace{5pt}
\noindent\underline{The case $k_1<k-1-k_{12}$:}
By applying BP formula, we have
\begin{equation*}
    \begin{split}
        I_j^{(2)} 
        &
        \leq
        \sum_{k_{1},k_{2},k_{12}}
        C^*\delta_j^{d+1-j+k_2}
        n^{k_1+k_2+k_{12}}
        \int_{r}^{\rmax}
        \rho_1^{d(2d+1-j+k_1+k_2+k_{12})-1} e^{-n\omega_d\rho_1^d}
        d\rho_1.
\end{split}
\end{equation*}
Similarly to the previous case, we apply Lemma \ref{lem:gamma}, and rearrange the terms. Hence,
\begin{equation*}
\begin{split}
        I_j^{(2)} 
        &
        \leq
        \delta_j^{d+1-j}n^{-(2d+1-j)}\Lambda^{2d-j}e^{-\Lambda}
        \sum_{k_{1},k_{2},k_{12}}
        C^*
        (\delta_j\Lambda)^{k_2}
        \Lambda^{k_1+k_{12}}.
\end{split}
\end{equation*}
Since later we choose the value of $\delta_j$ such that $\delta_j\Lambda\rightarrow\infty$, we have
\begin{equation}\label{eq:I_j^(2)-case1}
\begin{split}
        I_j^{(2)}
        &
        \leq
        C^*\delta_j^{d-j+k}n^{-(2d+1-j)}\Lambda^{2d-j+2k-3}e^{-\Lambda}
        ,
    \end{split}
\end{equation}
where in the last inequality we replaced the sum with its dominant term, obtained by taking $k_{1}=k-2,k_2=k-1$. 

\vspace{5pt}
\noindent\underline{The case $k_1=k-1-k_{12}$:} 
If $j=d$, then the centers $c_1,c_2,c_{12}$, all lie on the same line. Since later we take $\epsilon_j\gg \delta_j$, we have that $c_1,c_2$ lie on the same side of the $(d-1)$-dimensional hyperplane $\Pi(\bx_{12})$, which implies that $\cB(\bx_1)$ contains more than half the sphere $S(\bx_2)$ (see Figure \ref{fig:I2_j_eq_d}). Recall that $c_2$ being critical implies that $c_2\in\sigma(\bx_2)$, and therefore, the single point in $\bx_2{\setminus}\bx_{12}$ must lie inside $\cB(\bx_1)$, implying $m_1=1$ \eqref{eq:m1_m2}. However, this leads to a contradiction, since the assumption $k_1=k-1-k_{12}$ \eqref{eq:k1_k2_k12} implies $m_1=0$. Therefore, we proceed under the assumption that $0\leq j\leq d-1$.

\begin{figure}
    \centering
    \includegraphics[width=0.45\linewidth]{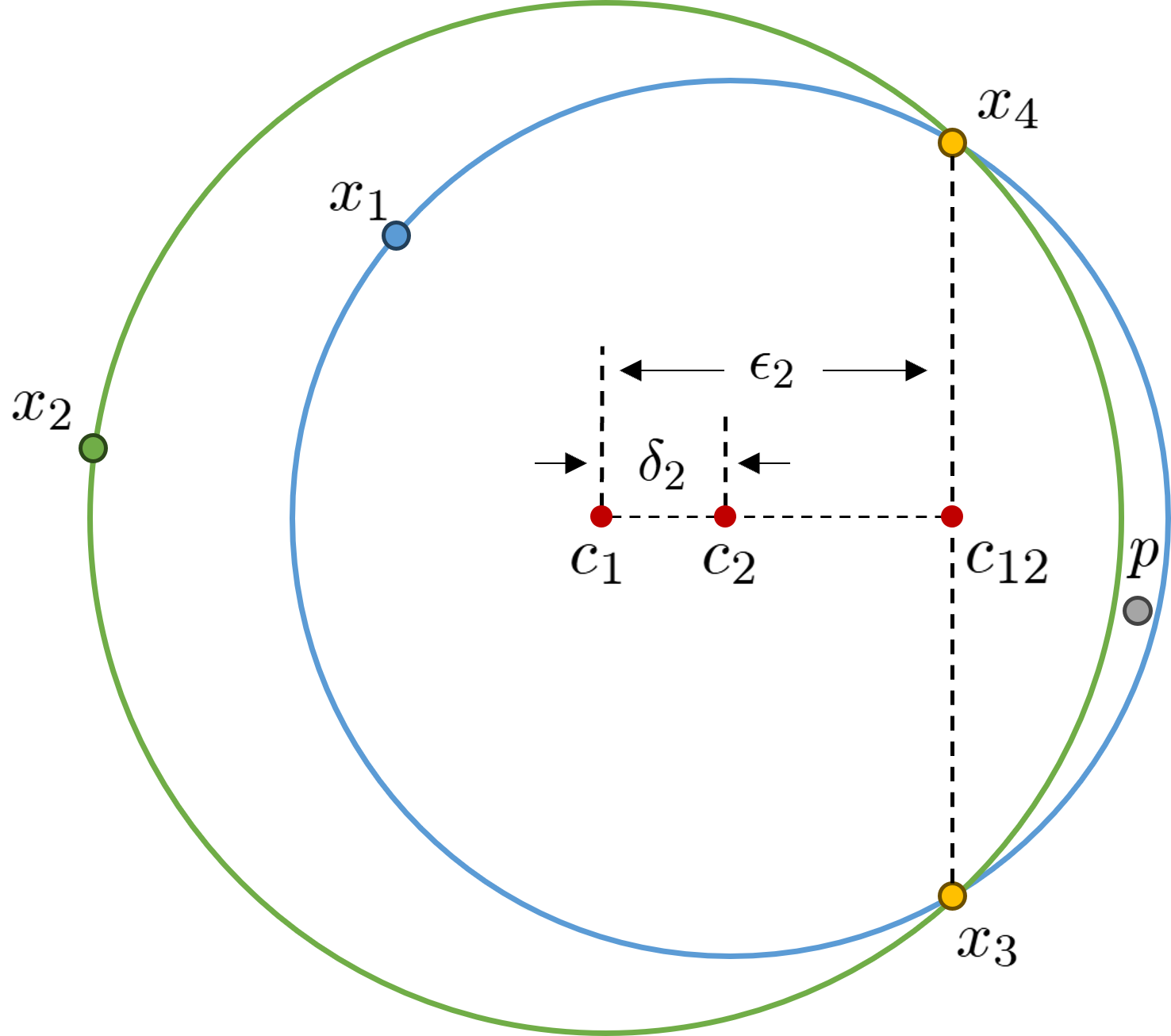}
    \caption{$I_j^{(2)}$ configurations  in $\R^2$ for $k=2$ and $j=2$. The critical points $c_1$ and $c_2$ of index $\mu=2$ are induced by $\cX_1=\{x_1,x_3,x_4\}$ and $\cX_2=\{x_2,x_3,x_4\}$, respectively. Since $j=|\bx_1\cap\bx_2|=2$ and $\delta_j<\epsilon_j$, the centers 
    $c_1$ and $c_2$ lie on the same side of the dashed line connecting $x_3$ and $x_4$, resulting in $\cB(\bx_1)$ (enclosed by the green circle) including more than half the sphere $S(\bx_2)$ (blue circle). Thus, $c_2$ critical implies that at least one point of  $\bx_2$ must lie inside $\cB(\bx_1)$ (the point $x_1$).}
    \label{fig:I2_j_eq_d}
\end{figure}

Note that if $k_1=k-1-k_{12}$, then $m_1 = 0$, so that all points of $\bx_2$ are outside (or on the boundary of) $\cB(\bx_1)$.
For $c_2$ to be critical, we therefore need at least half of the sphere $S(\bx_2)$ to be outside $\cB(\bx_1)$, implying that 
$\rho_2^2\geq \rho_1^2-\Delta^2$.

Take $c_2$ as the origin. Let $P_1$ be the $j$-dimensional subspace that contains $c_2$ and $\bx_{12}$. Let $P_2$ be the $(d-1)$-subspace that is orthogonal to the line connecting $c_1$ and $c_2$. Let $P_3=P_1^{\perp}\cap P_2$, and let $P_4 = P_1\oplus P_3$. The subspace $P_4$ is a $(d-1)$-dimensional subspace that contains $c_2,c_{12}$ and $\bx_{12}$. Let $S_3=P_4\cap S(\bx_2)$, which is a $(d-2)$-dimensional sphere in $P_4$. The sphere $S_3$ splits $S(\bx_2)$ into two hemispheres, and for $c_2$ to be critical, each of these hemispheres must contain at least one point of $\bx_2$. Note that more than half of the volume of one of these hemispheres lies  inside $\cB(\bx_1)$. We denote this hemisphere by $\hat{S}_3$. Since we assume $m_1=0$, the region $\hat{S}_3\cap\cB(\bx_1)$ does not contain a point of $\bx_2$, hence, there must be a point in $\hat{S}_3{\setminus}\cB(\bx_1)$ (see Figure \ref{fig:Ij_2_case_II}).

\begin{figure}
    \centering
    \begin{subfloat}[3D]
    {\centering
    \includegraphics[width=0.45\linewidth]{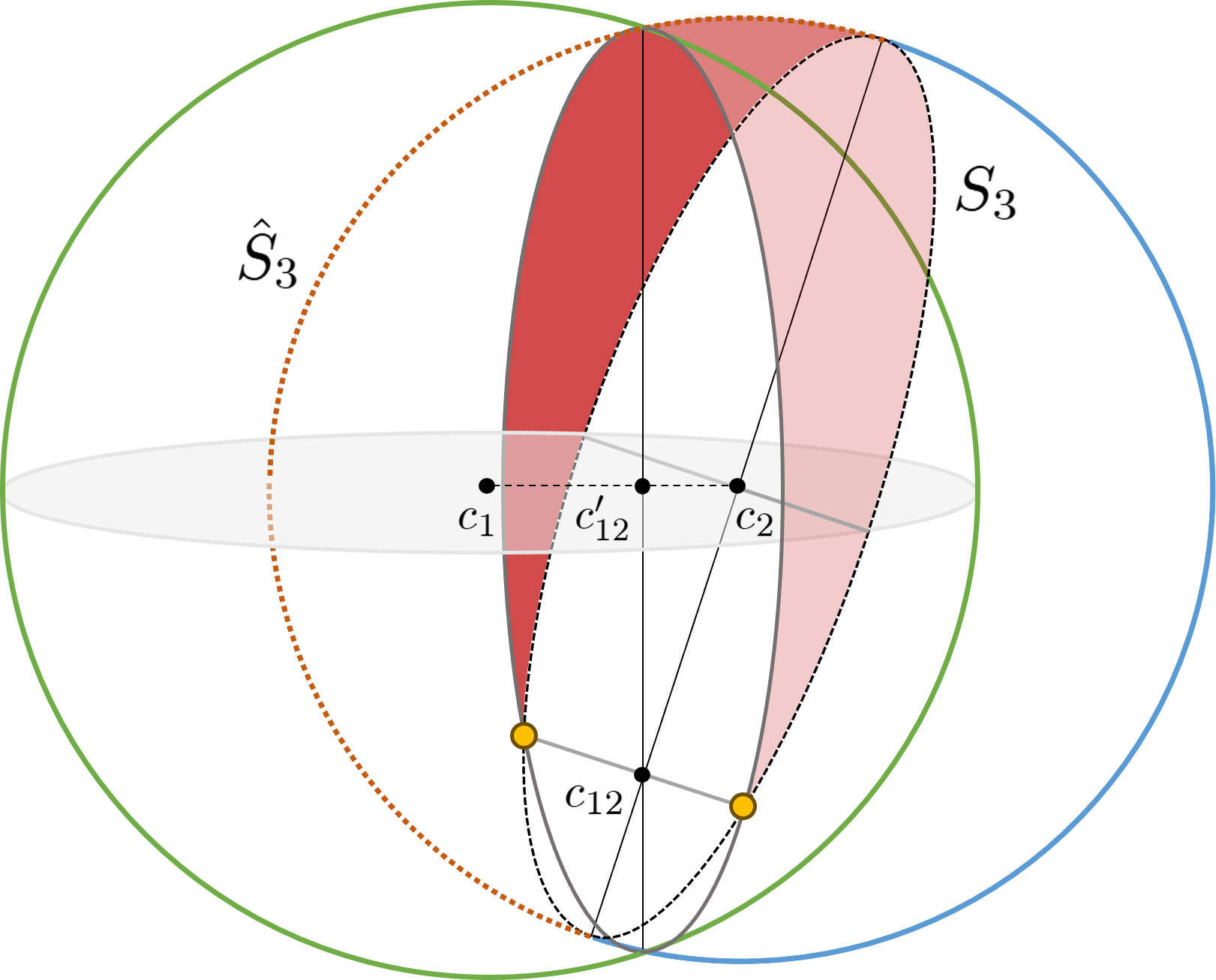}}
    \end{subfloat}
    \hfill
    \begin{subfloat}[Side view]
    {\centering
    \includegraphics[width=0.45\linewidth]{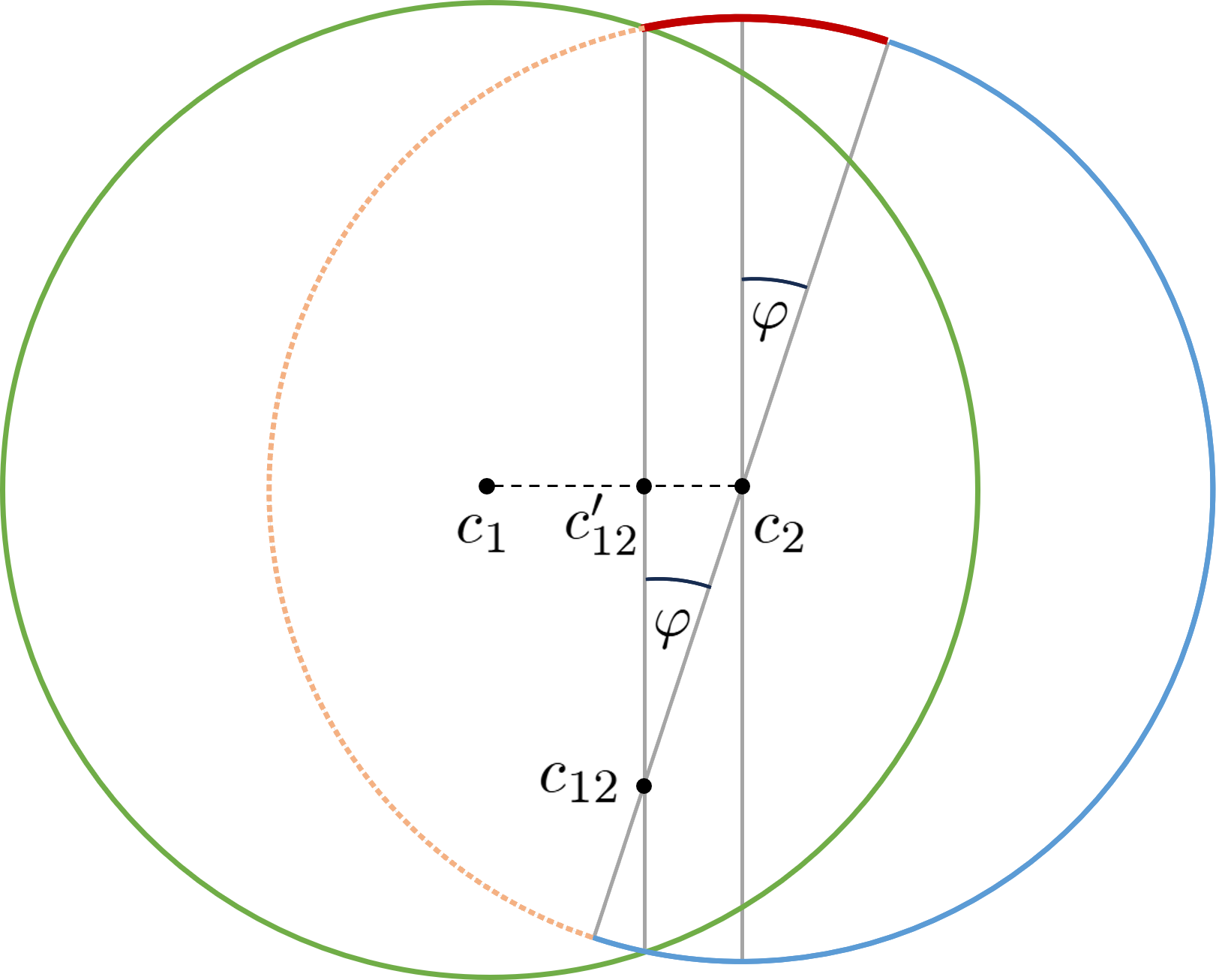}
    }
    \end{subfloat}
    \caption{$I_j^{(2)}$ for $m_1=0$ in $\R^3$. (a) The circle ($1$-sphere) $S_3$ (black dashed line) splits the critical sphere of $c_2$ (in blue) to two hemispheres, one of them is $\hat{S}_3$ (brown dashed line). $c_2$ critical implies than one of the points of the associated configuration must lie in the red region defined by the part of $\hat{S}_3$ not contained in $\cB(\bx_1)$ (the green sphere). (b) Side view of the critical configuration. 
    }
    \label{fig:Ij_2_case_II}
\end{figure}

To compute $\vol(\hat{S}_3{\setminus}\cB(\bx_1))$, denote $S_{12}=S(\bx_1){\cap}S(\bx_2)$ ($(d-2)$-dimensional sphere), and denote by $c_{12}'$ the center of $S_{12}$. Consider the triangle formed by $c_2,c_{12}$ and $c_{12}'$, and denote by $\varphi$ the angle formed by the lines $[c_{12},c_2]$ and $[c_{12},c_{12}']$ (see Figure \ref{fig:Ij_2_case_II}). Then,
\[
\varphi = \arcsin\left(\frac{|c_2-c_{12}'|}{|c_2-c_{12}|}\right)
\leq 
\arcsin\left(\frac{\delta_j}{\epsilon_j}\right) = O\left(\frac{\delta_j}{\epsilon_j}\right).
\]
Therefore, we have
\[
\vol(\hat{S}_3{\setminus}\cB(\bx_1)) = O\left(\frac{\delta_j}{\epsilon_j}\right)\vol(S(\bx_2)).
\]
Plugging this into the integral, and applying Lemma \ref{lem:gamma}, we get
\begin{equation}\label{eq:I_j^(2)-case2}
    \begin{split}
        I_j^{(2)} 
        &
        \leq
        \left(\frac{\delta_{j}}{\epsilon_j}\right)
                \delta_j^{d+1-j}n^{-(2d+1-j)}\Lambda^{2d-j}e^{-\Lambda}
        \sum_{k_{1},k_{2},k_{12}}
        C^*
        (\delta_j\Lambda)^{k_2}
        \Lambda^{k_1+k_{12}}  
        \\
        &
        \leq
        C^*\left(\frac{\delta_{j}}{\epsilon_j}\right)\delta_j^{d-j+k}n^{-(2d+1-j)}\Lambda^{2d-j+2k-2}e^{-\Lambda},
        \end{split}
\end{equation}
where in the last inequality we set $k_1=k_2=k-1$ (recall that $\delta_j\Lambda\rightarrow\infty$).

Note that for $1\leq j\leq d-1$ we have that \eqref{eq:I_j^(2)-case2} upperbounds \eqref{eq:I_j^(2)-case1}, namely,
\[
\delta_j^{d-j+k}n^{-(2d+1-j)}\Lambda^{2d-j+2k-3}e^{-\Lambda}
\leq
\left(\frac{\delta_{j}}{\epsilon_j}\right)\delta_j^{d-j+k}n^{-(2d+1-j)}\Lambda^{2d-j+2k-2}e^{-\Lambda}.
\]
By canceling the identical terms from both sides we get
$1\leq\left(\frac{\delta_{j}}{\epsilon_j}\right)\Lambda$ which (for large enough $n$) is true, since we assume $\delta_j\Lambda\rightarrow\infty$ and $\eps_j\to 0$. For $j=d$, recall that only \eqref{eq:I_j^(2)-case1} holds. Hence, by combining \eqref{eq:I_j^(2)-case1} and \eqref{eq:I_j^(2)-case2}, we get for all $1\leq j\leq d$,
\begin{equation}\label{eq:I_j^(2)-tot}
\begin{split}
I_j^{(2)}
&
\leq         
C^*\left(\frac{\delta_{j}}{\epsilon_j}\right)^{1-\alpha_j}\delta_j^{d-j+k}n^{-(2d+1-j)}\Lambda^{2d-j+2k-2-\alpha_j}e^{-\Lambda},
\end{split}
\end{equation}
where $\alpha_j\coloneqq\ind\{j=d\}$.

\vspace{5pt}
\noindent\textbf{Bounding $I_j^{(3)}$.} 
We separate this case into two regions, 
\begin{equation}\label{eq:I_j^(3,1)-split}
I_j^{(3)} = I_j^{(3,1)} + I_j^{(3,2)},   
\end{equation}
where 
\begin{equation*}
\begin{split}
    I_j^{(3,1)} & \coloneqq \int_{\cA_j'} 
    p(\bx)\ind\{|c_1-c_{12}|>\epsilon_j\rho_1, |c_1-c_2|>\delta_j\rho_1, \rho_2^2\geq \rho_1^2 - |c_1-c_2|^2\}
    d\bx,
    \\
    I_j^{(3,2)} & \coloneqq \int_{\cA_j'} 
    p(\bx)\ind\{|c_1-c_{12}|>\epsilon_j\rho_1, |c_1-c_2|>\delta_j\rho_1, \left(\rho_1-|c_1-c_2|\right)^2\leq\rho_2^2\leq \rho_1^2 - |c_1-c_2|^2\}
    d\bx.
\end{split}    
\end{equation*}

\noindent\underline{Bounding $I_j^{(3,1)}$:}
We start with a lower bound on $V_{\mathrm{uni}}(\bx_1, \bx_2)$.
We have
\[
V_{\mathrm{uni}}(\bx_1, \bx_2) = \omega_d(\rho_1^d+\rho_2^d) - V_{\mathrm{int}}(\bx_1, \bx_2),    
\]
where $V_{\mathrm{int}}(\bx_1, \bx_2)\coloneqq \vol(\cB(\bx_1){\cap}\cB(\bx_2))$.
Since $|c_1-c_2|^2\geq \rho_1^2-\rho_2^2$, and $|c_1-c_2|\geq \delta_j\rho_1$ in this region \eqref{eq:I_j_split}, we apply Lemma \ref{lem:vol_r1_r2}
to get the following bound,
\[
V_{\mathrm{int}}(\bx_1, \bx_2)
\leq
\frac{\omega_{d}}{2}(\rho_{1}^{d} + \rho_{2}^{d}) -
D_{\mathrm{int}}\delta_j(\rho_1^{d} + \rho_2^{d}),
\]
where $D_{\mathrm{int}}>0$, and therefore,
\[
V_{\mathrm{uni}}(\bx_1, \bx_2) 
\geq
\frac{\omega_d}{2}\rho_1^d +
\omega_{d}r^d\left(C_1\delta_j + \frac{1}{2}\right),
\]
where $C_1\coloneqq 2D_{\mathrm{int}}/\omega_d$, and we used $r\leq \rho_1,\rho_2$.
Applying the last bound and the bounds $V_1(\bx),V_2(\bx),V_{12}(\bx)\leq n\omega_d\rho_1^d$, we get
\begin{equation*}
\begin{split}
I_j^{(3,1)}
&
\leq
\sum_{k_{1},k_{2},k_{12}}
C^*n^{k_1+k_2+k_{12}}
e^{-\Lambda(1/2+C_1\delta_j)}
\int_{\cA_j'}
\rho_1^{d(k_1+k_2+k_{12})}e^{-n(\omega_d/2)\rho_1^d}d\bx. 
\end{split}    
\end{equation*}
Next, since $\cB(\bx_1){\cap}\cB(\bx_2)\neq\emptyset$, we have $|c_1-c_2|\leq 2\rho_1$. Thus, the points in $\bx{\setminus}\bx_1$ lie inside a ball of radius $3\rho_1$ around $c_1$, whose volume is $3^d\omega_d\rho_1^d$.
Hence,
\begin{equation*}
\begin{split}
I_j^{(3,1)}
&
\leq 
\sum_{k_{1},k_{2},k_{12}}
C^*n^{k_1+k_2+k_{12}}
e^{-\Lambda(1/2+C_1\delta_j)}
\int_{\bx_1}h_r(\bx_1)\rho_1^{d(k_1+k_2+k_{12})}e^{-n(\omega_d/2)\rho_1^d}
\\
&
\times
\int_{\bx{\setminus}\bx_1}
\ind\{\bx{\setminus}\bx_1\in B_{3\rho_1}(c_1)\}
d\bx
\\
&
=
\sum_{k_{1},k_{2},k_{12}}
C^*n^{k_1+k_2+k_{12}}
e^{-\Lambda(1/2+C_1\delta_j)}
\int_{\bx_1}h_r(\bx_1)\rho_1^{d(d+1-j+k_1+k_2+k_{12})}e^{-n(\omega_d/2)\rho_1^d}
d\bx.
\end{split}
\end{equation*}
Similarly to previous cases, we use a change of coordinates and apply Lemma \ref{lem:gamma},
\begin{equation}\label{eq:I_j^(3,1)}
\begin{split}
I_j^{(3,1)}
&
\leq
n^{-(2d + 1 - j)}\Lambda^{2d-j}e^{-\Lambda-C_1\delta_j\Lambda}
\sum_{k_{1},k_{2},k_{12}}C^*\Lambda^{k_1+k_2+k_{12}}
\\
&
\leq
C^*n^{-(2d + 1 - j)}\Lambda^{2d-j+2k-2}e^{-\Lambda-C_1\delta_j\Lambda},
\end{split}    
\end{equation}
where in the last inequality we set $k_1+k_2+k_{12}=2k-2$ \eqref{eq:k1_k2_k12}.

\vspace{5pt}
\noindent\underline{Bounding $I_j^{(3,2)}$:}\label{par:I_j^(3,2)}
We start with an upper bound on $\rho_1-\rho_2$, this will enable us to bound the volume of $\cB(\bx_1){\setminus}\cB(\bx_2)$ from below.
Recall that we are looking at $\Lambda = \log n + (d+k-2)\log\log n +c(n)$, for $c(n) = o(\log\log n)$. Substituting it into the expected value in Proposition \ref{prop:gen_exp_var_Fkr_gen}, we have
\[
\E(\Ncrit{d}) \sim n\Lambda^{d+k-2}e^{-\Lambda}\sim e^{-c(n)}.
\]
Similarly, taking $r'=(1+\delta)r$, where $\delta \sim\frac{\log\log n}{\log n}$, 
we get
$\E(N_{k,r'}^{d})\rightarrow 0$.
In other words, we only need to consider $r\leq \rho_i\leq (1+\delta)r$, where $i=1,2$. 
This also implies
$\rho_1-\rho_2\leq \delta r$.
Recall from \eqref{eq:I_j_split} that for $I_j^{(3,2)}$, we are interested in the region $(\rho_1-|c_1-c_2|)^2\leq \rho_2^2\leq \rho_1^{2}-|c_1-c_2|^2$.
Hence,
\[
|c_1-c_2|^2\leq \rho_1^2-\rho_2^2 = (\rho_1+\rho_2)(\rho_1-\rho_2)
\leq
2|c_1-c_2| \rho_1^2.
\]
So
\[
\delta_j\rho_1\leq |c_1-c_2|\leq \sqrt{2\delta}\rho_1.
\]

Next, we have 
\[
\vuni(\bx_1,\bx_2) = \omega_d\rho_1^d + \vol(\cB(\bx_2){\setminus}\cB(\bx_1)).
\]
We bound the second term from below in the following way. We have 
\begin{equation*}
\begin{split}
\vol(\cB(\bx_2){\setminus}\cB(\bx_1))
&
\geq
\vol(B_{r}(c_2){\setminus}B_{(1+\delta)r}(c_1)) 
\\
&
=
\vol(B_{r}(c_2-c_1){\setminus}B_{(1+\delta)r}(0) )
\\
&
= 
(1+\delta)^d r^d \vol\left(B_{(1+\delta)^{-1}}\left((c_2-c_1)/(1+\delta)r\right){\setminus}B_{1}(0)\right),
\end{split}
\end{equation*}
where in the inequality we used $r\leq\rho_i\leq (1+\delta)r$, and the equalities are due to shifting and scaling.
Next, note that $|c_2-c_1|>\delta_jr$, and $(1+\delta)^{-1}>1-\delta$. Hence, abusing notation, we have
\begin{equation*}
\begin{split}
\vol(\cB(\bx_2){\setminus}\cB(\bx_1))
&
\geq
(1+\delta)^d r^d \vol\left(B_{1-\delta}(\delta_j/(1+\delta)){\setminus}B_{1}(0)\right)
\\
&
\geq
(1+\delta)^d r^d \vol\left(B_{1-\delta}(\delta_j/2){\setminus}B_{1}(0)\right)
\end{split}    
\end{equation*}
We adopt the notation of Lemma \ref{lem:vdiff}, 
\[
\vdiff(\epsilon,\alpha) \coloneqq \vol(B_{1-\alpha\epsilon}(z_2)){\setminus}
\vol(B_{1}(z_1)),
\]
where $\epsilon=|z_2-z_1|>0$, and $\alpha\in(0,1)$. In our case, we have $\epsilon=\delta_j/2$ and  
$\alpha=2\delta/\delta_j=1/2$, by setting $\delta = \delta_j/4$. Thus, by Lemma \ref{lem:vdiff}, we have
\begin{equation*}
\begin{split}
\vdiff(\delta_j,1/2)
&
\geq 
\delta_j\frac{\omega_{d-1}}{2(d+1)}(1-1/4)^{\frac{d+1}{2}} + o(\delta_j) 
\\
&
=
C_2\delta_j+ o(\delta_j),
\end{split}    
\end{equation*}
where $C_2\coloneqq \frac{\omega_{d-1}}{2(d+1)}(1-1/4)^{\frac{d+1}{2}}$. 
Note that since we assume $\delta_j\rightarrow 0$, for large enough $n$ we have $\delta_j<\sqrt{2\delta}$.
Therefore, we have
\[
n\vuni(\bx_1,\bx_2) 
\geq 
n\omega_d\rho_1^d + n C_2\delta_j (1+\delta)^{d}r^d 
\geq
n\omega_d\rho_1^d + C_3\delta_j \Lambda,
\]
where $C_3\coloneqq C_2/\omega_d$.
By applying the usual bounds $\rho_2\leq\rho_1$, $\vsimp(\bs{\theta}_2)/\vsimp(\bs{\theta}_{12})\leq 1$, and $V_1(\bx),V_2(\bx),V_{12}(\bx)\leq \omega_d\rho_1^d$, we have \begin{equation*}
    \begin{split}
        I_j^{(3,2)} 
        &
        \leq
        \sum_{k_{1},k_{2},k_{12}}C^*n^{k_1+k_2+k_{12}}
        e^{-C_3\Lambda\delta_j}
        \int_{\bx_1} d\bx_1 
        h_r(\bx_1)\rho_1^{(d-1)(d+1-j)+d(k_1+k_2+k_{12})}e^{-n\omega_d\rho_1^d}
        \\
        &
        \times
        \int_{c_2} dc_2\ind\{\delta_j\rho_1<|c_1-c_2|\leq\sqrt{2\delta}\rho_1\}
        \\
        &
        \leq
        \sum_{k_{1},k_{2},k_{12}}C^* n^{k_1+k_2+k_{12}}
        e^{-C_3\Lambda\delta_j}
        \int_{\bx_1} d\bx_1 
        h_r(\bx_1)\rho_1^{d(d+1-j)+d(k_1+k_2+k_{12})}e^{-n\omega_d\rho_1^d},
    \end{split}
\end{equation*}
where we upper bounded the integral over $c_2$ with the volume of a $(d-j+1)$-dimensional ball of radius $\rho_1$. 
By repeating similar steps as in previous cases, we thus have
\begin{equation*}
    \begin{split}
        I_j^{(3,2)} 
        &
        \leq
        n^{-(2d+1-j)}\Lambda^{2d-j}e^{-\Lambda}
        e^{-C_3\Lambda\delta_j}
        \sum_{k_{1},k_{2},k_{12}}C^*
        \Lambda^{k_1+k_2+k_{12}}
    \end{split}
\end{equation*}
Since more than half the sphere $S(\bx_2)$ is included in $\cB(\bx_1)$ in this case, for $c_2$ to be critical, we must have  $m_{1}\geq 1$ \eqref{eq:m1_m2}. Hence, the dominant term in the sum above is the one with $k_1=k-2,k_2=k-1$. Therefore, we have
\begin{equation}\label{eq:I_j^(3,2)}
    \begin{split}
        I_j^{(3,2)} 
        &
        \leq
        C^*
        n^{-(2d+1-j)}
        \Lambda^{2d-j+2k-3}e^{-\Lambda-C_3\Lambda\delta_j}.    
    \end{split}
\end{equation}

By substituting the bounds \eqref{eq:I_j^(3,1)},\eqref{eq:I_j^(3,2)} into \eqref{eq:I_j^(3,1)-split}, we get
\[
I_j^{(3)}\leq
C^*n^{-(2d + 1 - j)}\Lambda^{2d-j+2k-3}e^{-\Lambda}\left(\Lambda e^{-C_1\delta_j\Lambda}
+
e^{-C_3\Lambda\delta_j}
\right).
\]
Setting $C_4\coloneqq\min\{C_1,C_3\}$, we get
\begin{equation}\label{eq:I_j^(3)-tot}
 I_j^{(3)}\leq
C^*n^{-(2d + 1 - j)}\Lambda^{2d-j+2k-2}e^{-\Lambda-C_4\delta_j\Lambda}.
\end{equation}

By substituting \eqref{eq:I_j^(1)},\eqref{eq:I_j^(2)-tot},\eqref{eq:I_j^(3)-tot} into \eqref{eq:I_j}, we have that for $1\leq j\leq d$,
\begin{equation}\label{eq:I_j_bound}
\begin{split}
I_j 
& 
= 
Cn^{2(d+1)-j}(I_j^{(1)} + I_j^{(2)} + I_j^{(3)})
\\
&
\leq
C^*n\Lambda^{d+k-2}e^{-\Lambda}
\left(\epsilon_j^{d+1-j+k}\Lambda^{d -j + k} 
+ 
\left(\frac{\delta_j}{\epsilon_j}\right)^{1-\alpha_j}\delta_j^{d-j+k}\Lambda^{d-j+k-\alpha_j} 
+\Lambda^{d-j+k}
e^{-C_4\delta_j\Lambda}
\right)
.
\end{split}
\end{equation}

\vspace{5pt}
\noindent\textbf{Bounding \headermath{I_0 - \E\{N\}^2}.}
Define $\Phi(\cX_1,\cX_2):=\ind\{\cB(\bx_1){\cap}\cB(\bx_2)=\emptyset\}$,  then 
\begin{equation*}
\begin{split}
I_0
&
= 
\E\curparam{\sum_{\substack{\cX_1,\cX_2\in\cP_n^{(d+1)} \\ 
|\cX_1\cap\cX_2|=0}} g_{r}^{k-1}(\cX_1, \cP_n)g_{r}^{k-1}(\cX_2, \cP_n)}
\\
&
=
\E\curparam{\sum_{\substack{\cX_1,\cX_2\in\cP_n^{(d+1)} \\ 
|\cX_1\cap\cX_2|=0}} g_{r}^{k-1}(\cX_1, \cP_n)g_{r}^{k-1}(\cX_2, \cP_n)
\Phi(\cX_1,\cX_2)}
\\
&
+
\E\curparam{\sum_{\substack{\cX_1,\cX_2\in\cP_n^{(d+1)} \\ 
|\cX_1\cap\cX_2|=0}} g_{r}^{k-1}(\cX_1, \cP_n)g_{r}^{k-1}(\cX_2, \cP_n)\left(1-\Phi(\cX_1,\cX_2)\right)}
\\
&
=
T_1+T_2.
\end{split}    
\end{equation*}
We start by showing that $T_1 - \E\{N\}^2\leq 0$.

Using Theorem \ref{cor:palm}, we have
\begin{equation*}
\begin{split}
\mathbb{E}\{N\}^2 & = \frac{n^{2(d+1)}}{((d+1)!)^2}\mathbb{E}\{g_{r}^{k-1}(\cX_1', \cP_n\cup\cX_1')g_{r}^{k-1}(\cX_2', \cP_n'\cup\cX_2')\} \\
T_1 & = 
\frac{n^{2(d+1)}}{((d+1)!)^2}\mathbb{E}\{g_{r}^{k-1}(\cX_1', \cP_n\cup\cX')g_{r}^{k-1}(\cX_2',\cP_n\cup\cX')\Phi(\cX_1',\cX_2')\},
\end{split}    
\end{equation*}
where $\cX_1',\cX_2'$ are independent sets of $d+1$ i.i.d.~random variables uniformly distributed in $\T^d$, $\cX' = \cX_1'\cup\cX_2'$, and $\cP_n'$ an independent copy of $\cP_n$. Note that when $\cB(\cX_1')\cap \cB(\cX_2') = \emptyset$ we have $g_r^{k-1}(\cX_i', \cP_n\cup\cX') = g_r^{k-1}(\cX_i', \cP_n\cup\cX'_i)$. Therefore,
\begin{equation*}
\begin{split}
T_1- \mathbb{E}\{N\}^2 & =
\frac{n^{2(d+1)}}{((d+1)!)^2}
\Big(
\mathbb{E}\{g_{r}^{k-1}(\cX_1', \cP_n\cup\cX_1')g_{r}^{k-1}(\cX_2',\cP_n\cup\cX_2')\Phi(\cX_1',\cX_2')\}
\\
&
- \mathbb{E}\{g_{r}^{k-1}(\cX_1', \cP_n\cup\cX_1')g_{r}^{k-1}(\cX_2',\cP_n'\cup\cX_2')\Phi(\cX_1',\cX_2')\} 
\\
&
- \mathbb{E}\{g_{r}^{k-1}(\cX_1', \cP_n\cup\cX_1')g_{r}^{k-1}(\cX_2', \cP_n'\cup\cX_2')\left(1-\Phi(\cX_1',\cX_2')\right)\}
\Big)
\\
&
\le
\frac{n^{2(d+1)}}{((d+1)!)^2}\mathbb{E}\{\Delta g\},
\end{split}    
\end{equation*}
where 
\[
\Delta g\coloneqq
\Phi(\cX_1',\cX_2')\left(
g_{r}^{k-1}(\cX_1', \cP_n\cup\cX_1')g_{r}^{k-1}(\cX_2', \cP_n\cup\cX_2')-
g_{r}^{k-1}(\cX_1', \cP_n\cup\cX_1')g_{r}^{k-1}(\cX_2', \cP_n'\cup\cX_2')
\right).
\]
To show that $\mathbb{E}(\Delta g)=0$, we use the conditional expectation $\mathbb{E}_{\cX_1',\cX_2'}\{\cdot\}\coloneqq\mathbb{E}\{\cdot|\cX_1',\cX_2'\}$. Given $\cX_1',\cX_2'$, $\Delta g\neq 0$ implies  $\cB(\cX_1')\cap\cB(\cX_2')=\emptyset$. Using the spatial independence of $\cP_n$, we have
\begin{equation*}
\begin{split}
\mathbb{E}_{\cX_1',\cX_2'} & \{\Phi(\cX_1',\cX_2')
g_{r}^{k-1}(\cX_1', \cP_n\cup\cX_1')g_{r}^{k-1}(\cX_2', \cP_n\cup\cX_1')\}   \\
&
=
\Phi(\cX_1',\cX_2')
\mathbb{E}_{\cX_1',\cX_2'}\{
g_{r}^{k-1}(\cX_1', \cP_n\cup\cX_1')
\}
\mathbb{E}_{\cX_1',\cX_2'}\{
g_{r}^{k-1}(\cX_2', \cP_n\cup\cX_2')
\}
\\
&
=
\Phi(\cX_1',\cX_2')
\mathbb{E}_{\cX_1',\cX_2'}\{
g_{r}^{k-1}(\cX_1', \cP_n\cup\cX_1')
g_{r}^{k-1}(\cX_2', \cP_n'\cup\cX_2')
\},
\end{split}    
\end{equation*}
where the last equality is due to $\cP_n$ and $\cP_n'$ being independent and having the same distribution. Therefore, $\mathbb{E}_{\cX_1',\cX_2'}\{\Delta g\}= 0$, and consequently $\mathbb{E}\{\Delta g\} = 0$.

Next, we bound $T_2$. Firstly, we apply Mecke's formula,
\[
T_2 =
\frac{n^{2(d+1)}}{((d+1)!^2)}
\E\curparam{g_{r}^{k-1}(\cX_1', \cX'\cup\cP_n)g_{r}^{k-1}(\cX_2', \cX'\cup\cP_n)\left(1-\Phi(\cX_1',\cX_2')\right)}.
\]
Next, we separate $T_2$ to two regions
\[
T_2 = 2\frac{n^{2(d+1)}}{((d+1)!^2)}\left(I_0^{(1)}+I_0^{(2)}\right)
\]
such that 
\begin{equation*}
    \begin{split}
        I_0^{(1)} 
        &
        \coloneqq
        \int_{\cA_0'}\ind\curparam{|c_1-c_2|\leq\epsilon_0\rho_1}
        p(\bx)
        d\mathrm{x}
        \\
        I_0^{(2)}
        &
        \coloneqq
        \int_{\cA_0'}\ind\curparam{|c_1-c_2|>\epsilon_0\rho_1}
        p(\bx)
        d\mathrm{x}.
    \end{split}
\end{equation*}

By applying the BP formula (Lemma \ref{lem:bp}) for $\bx_2$ ($\bx_1$ and $\bx_2$ are disjoint) we have
\begin{equation*}
    \begin{split}
        I_0^{(1)} 
        =
        &\sum_{k_{1},k_{2},k_{12}}
        \frac{d!n^{k_1+k_2+k_{12}}}{k_1!k_2!k_{12}!}
        \int_{\bx_1} d\bx_1
        \int_{c_2} dc_2
        \int_{\rho_2}d\rho_2
        \int_{\bs{\theta}_2} d\bs{\theta}_2
        \\
        &
        \times 
        h_r(\bx_1)h_r(\bx_2)
        \ind\curparam{\rho_2\leq\rho_1, |c_1 - c_{2}|\leq \epsilon_0\rho_1}
        \rho_2^{d^2-1}\\
        &\times\vsimp(\bs{\theta}_2)
    V_1(\bx)^{k_1}V_2(\bx)^{k_2}V_{12}(\bx)^{k_{12}} e^{-nV_{\mathrm{uni}}(\bx_1, \bx_2)}.
    \end{split}
\end{equation*}
Similarly to case of $I_j^{(1)}$ \eqref{eq:I_j^(1)}, we apply the bounds $\vsimp(\bs{\theta}_2)\leq 1$, $V_1(\bx),V_{12}(\bx)\leq \omega_d\rho_1^d$, $V_{\mathrm{uni}}(\bx_1,\bx_2)\geq \omega_d\rho_1^d$, and $V_2(\bx)\leq \frac{\kappa_{d}}{2} \epsilon_0\rho_1^{d}$.
Hence,
\begin{equation*}
    \begin{split}
        I_0^{(1)} 
        &
        \leq
        \sum_{k_{1},k_{2},k_{12}}
        C^*\epsilon_{0}^{k_2}
        n^{k_1+k_2+k_{12}}
        \int_{\bx_1} d\bx_1
        h_r(\bx_1)
        \rho_1^{d(k_1+k_2+k_{12})}
        e^{-n\omega_d\rho_1^d}
        \\
        &
        \times
        \int_{c_2}\ind\curparam{|c_1 - c_{2}|\leq \epsilon_0\rho_1} dc_2
        \int_{\rho_2}\rho_2^{d^2-1}\ind\{\rho_2\leq\rho_1\}d\rho_2
        .
    \end{split}
\end{equation*}
Note that the integral over $c_2$ is merely the volume of the ball $B_{\epsilon_0\rho_1}(c_1)$, and thus,
\begin{equation*}
    \begin{split}
        I_0^{(1)} 
        &
        \leq
        \sum_{k_{1},k_{2},k_{12}}
        C^*\epsilon_{0}^{d+k_2}
        n^{k_1+k_2+k_{12}}
        \int_{\bx_1} d\bx_1
        h_r(\bx_1)
        \rho_1^{d(k_1+k_2+k_{12}+1)}
        e^{-n\omega_d\rho_1^d}
        \int_{\rho_2}\rho_2^{d^2-1}\ind\{\rho_2\leq\rho_1\}d\rho_2
        .
    \end{split}
\end{equation*}

Next, we split this case into two subcases according to $k_1$ \eqref{eq:k1_k2_k12}, (1) when $k_1=k-1-k_{12}$ and (2) $k_1<k-1-k_{12}$. 

\vspace{5pt}
\noindent\underline{The case $k_1=k-1-k_{12}$:} In this case we have $m_1=0$ \eqref{eq:m1_m2}. Thus, $c_2$ critical implies $\rho_2\geq \rho_1(1-\epsilon_0^2)^{1/2}$ (otherwise $\cB(\bx_1)$ includes more than half the sphere $S(\bx_2)$, rendering $c_2$ non-critical). Hence,
\begin{equation*}
    \begin{split}
        I_0^{(1)} 
        &
        \leq
        \sum_{k_{1},k_{2},k_{12}}
        C^*\epsilon_0^{d+k_2} n ^{k_1+k_2+k_{12}}
        \int_{\bx_1} d\bx_1
        h_r(\bx_1)\rho_1^{d(k_1+k_2+k_{12}+1)}
        e^{-n\omega_d\rho_1^d}
        \left(\rho_1^{d^2}-\rho_1^{d^2}(1-\epsilon_0^2)^{d^2/2}\right)
        \\
        &
        \approx
        \sum_{k_{1}, k_{2}, k_{12}}
        C^*\epsilon_0^{d+k_2+2}n^{k_1+k_2+k_{12}}
        \int_{\bx_1} d\bx_1
        h_r(\bx_1)\rho_1^{d(d+k_1+k_2+k_{12}+1)}
        e^{-n\omega_d\rho_1^d},
\end{split}
\end{equation*}
where we used $1-(1-\epsilon^{2})^{d^2/2}=\epsilon^2 d^2/2 + O(\epsilon^4).$
Similarly to previous cases, we apply a change of coordinates and Lemma \ref{lem:gamma}, 
\begin{equation}\label{eq:I_0^(1)-case1}
\begin{split}
        I_0^{(1)} 
        &
        \leq
        \epsilon_0^{d+2} n^{-(2d+1)}\Lambda^{2d}e^{-\Lambda}
        \sum_{k_{1},k_{2},k_{12}}
        C^*\Lambda^{k_1+k_{12}}(\epsilon_0\Lambda)^{k_2}
        \\
        &
        \leq
        C^*\epsilon_0^{d+k+1} n^{-(2d+1)}\Lambda^{2d+2k-2}e^{-\Lambda}.
    \end{split}
\end{equation}

\noindent\underline{The case $k_1<k-1-k_{12}$:} 
Recall that taking $r'=(1+\delta)r$, for $\delta \sim\frac{\log\log n}{\log n}$,
we have
$\E(N_{k,r'}^{d})\rightarrow 0$, implying that we only need to consider $\rho_1(1-\delta)\leq \rho_2\leq\rho_1$. 
Therefore, by repeating similar steps as in the previous case, we get
\begin{equation}\label{eq:I_0^(1)-case2}
    \begin{split}
        I_0^{(1)} 
        &
        \leq
        \epsilon_0^{d}\delta n^{-(2d+1)}\Lambda^{2d}e^{-\Lambda}
        \sum_{k_{1},k_{2},k_{12}}
        C^*\Lambda^{k_1+k_{12}}(\epsilon_0\Lambda)^{k_2}
        \\
        &
        \leq
        C^*\epsilon_0^{d+k} n^{-(2d+1)}\Lambda^{2d+2k-3}e^{-\Lambda}.
    \end{split}
\end{equation}
where in the last inequality we set $\delta=\epsilon_0/2$, $k_1=k-2$, and $k_2=k-1$.

To conclude, combining \eqref{eq:I_0^(1)-case1} and \eqref{eq:I_0^(1)-case2} yields
\[
    \begin{split}
        I_0^{(1)} 
        &
        \leq
        C^*\epsilon_0^{d+k} n^{-(2d+1)}\Lambda^{2d+2k-3}e^{-\Lambda}\left(\epsilon_0\Lambda 
        +
        1
        \right)
        \\
        &
        \leq
        C^*\epsilon_0^{d+k+1} n^{-(2d+1)}\Lambda^{2d+2k-2}e^{-\Lambda},
    \end{split}
\]
since we later choose $\epsilon_0\Lambda\rightarrow\infty$, as $n\rightarrow\infty$.

Bounding $I_0^{(2)}$ follows the same lines as in $I_j^{(3)}$ \eqref{eq:I_j^(3)-tot}. Hence,
\[
\begin{split}
I_0^{(2)} & \leq C^*n^{-(2d+1)}\Lambda^{2d+2k-2}e^{-\Lambda-C_4\epsilon_0\Lambda}.
\end{split}   
\]

\noindent\underline{Bounding \headermath{I_0 - \E\{N\}^2} - conclusion:} 
Putting everything together, we get
\begin{equation}\label{eq:I_0-EN2}
I_0-\E\{N\}^2
\leq
T_2
\leq
C^*n\Lambda^{d+k-2}e^{-\Lambda}
\left(
\epsilon_0^{d+k+1} \Lambda^{d+k}
+
\Lambda^{d+k}e^{-C_4\epsilon_0\Lambda}
\right).
\end{equation}

\noindent
\textbf{Convergence of the variance.}
Recall from Proposition \ref{prop:gen_exp_var_Fkr_gen} that $\E\{N\}\sim n\Lambda^{d+k-2}e^{-\Lambda}$. 
For $1\leq j\leq d$, by \eqref{eq:I_j_bound} we have
\begin{equation}\label{eq:Ij_conv}
\frac{I_j}{\E\{N\}}
\leq
C^*
\left(\epsilon_j^{d+1-j+k}\Lambda^{d -j + k} 
+ 
\left(\frac{\delta_j}{\epsilon_j}\right)^{1-\alpha_j}\delta_j^{d-j+k}\Lambda^{d-j+k-\alpha_j} 
+
\Lambda^{d-j+k}
e^{-C_4\delta_j\Lambda}
\right).    
\end{equation}
By choosing $\epsilon_j=\Lambda^{-\frac{d+1/2-j+k}{d+1-j+k}}$, and
$\delta_j=\frac{d-j+k+1}{C_4}\frac{\log\Lambda}{\Lambda}$,
we get
$\frac{I_j}{\mathbb{E}\{N\}}
\overset{n\rightarrow\infty}{\longrightarrow} 0.
$

Next, by \eqref{eq:I_0-EN2} we have
\begin{equation}\label{eq:I0_conv}
\frac{I_0 - \E\{N\}^2}{\E\{N\}}
\leq
C^*
\left(
\epsilon_0^{d+k+1} \Lambda^{d+k}
+
\Lambda^{d+k}e^{-C_4\epsilon_0\Lambda}
\right).    
\end{equation}
Setting $\epsilon_0 =\Lambda^{-\frac{d+k+1/2}{d+k+1}}$ makes the limit go to zero, which concludes the proof.
\end{proof}

As mentioned above, evaluating $\mathrm{Var}(\Ncrit{\mu})$ for $\mu<d$ introduces more special cases than $\mu=d$, due to the following reason. In the proof above for $\mathrm{Var}(\Ncrit{d})$, we relied on the fact that a critical configuration $\cX$ of index $d$ must satisfy $|\cX|=d+1$ and $\cI(\cX,\cP)=k-1$. However, for a critical configuration of index $\mu<d$, the numbers $|\cX|$ and $\cI(\cX,\cP)$ are not fixed, but must satisfy the constraint $|\cX|+\cI(\cX,\cP)=\mu+k$ (see \eqref{eq:I_and_mu}). Consequently, two interacting critical configurations of index $\mu$ may have different sizes, $|\cX_1|,|\cX_2|$, and $I(\cX_1,\cP),I(\cX_2,\cP)$.  
Formally, denoting $\xi_i=|\cX_i|$ for $i\in\{1,2\}$, the second moment $\E\{(\Ncrit{i})^2\}$ takes the form 
\[
\E\{(\Ncrit{\mu})^2\} = \sum_{\xi_1,\xi_2=\max\{2,\mu+1\}}^{\min\{\mu+k,d+1\}}\sum_{j=0}^{\min\{\xi_1,\xi_2\}}I_j,
\]
where $I_j$ (using the same notations as in \eqref{eq:I_j_palm}) is given by
\[
I_j=\frac{n^{\xi_1+\xi_2-j}}{j!(\xi_1-j)!(\xi_2-j)!}
\E\curparam{g_{r}^{\mu+k-\xi_1}(\cX'_1, \cP_n\cup\cX')g_{r}^{\mu+k-\xi_2}(\cX'_2, \cP_n\cup\cX')}.
\] 
This variability in configuration sizes introduces more special cases, complicating the second moment analysis and making the proof more technical and substantially longer.
However, it can be shown that this variability affects only the constants involved, not the asymptotic rate, which depends on the configuration sizes solely through the value of $\mu+k$ \eqref{eq:I_and_mu} 
(the total number of points that define a critical configuration of index $\mu$). This is regardless of how these points are partitioned between the open ball associated with the configuration \eqref{eq:crit_ball} (the set whose size is $\cI(\cX,\cP)$) and its boundary (the set $\cX$).

\begin{proof}[Proof of Proposition \ref{prop:gen_exp_var_Fkr_gen} - convergence rate]
From \eqref{eq:var_explicit}, we have
\[
\mathrm{Var}(\Ncrit{d}) - \E\{\Ncrit{d}\} = 
\sum_{j=1}^{d}I_j + (I_0 - \E\{\Ncrit{d}\}^2).
\]
From \eqref{eq:Ij_conv} we have
\begin{equation*}
\begin{split}
I_j
&
\leq
C^*n\Lambda^{d+k-2}e^{-\Lambda}
\left(
\epsilon_j^{d+1-j+k}\Lambda^{d -j + k} 
+ 
\left(\frac{\delta_j\Lambda}{\epsilon_j}\right)^{1-\alpha_j}(\delta_j\Lambda)^{d-j+k}\Lambda^{-1} 
+
\Lambda^{d-j+k}e^{-C_4\delta_j\Lambda}
\right)
,        
\end{split}    
\end{equation*}
and by \eqref{eq:I0_conv}, we have
\[
I_0 - \E\{\Ncrit{d}\}^2
\leq
C^*n\Lambda^{d+k-2}e^{-\Lambda}
\left(
\epsilon_0^{d+k+1}\Lambda^{d+k}
+
\Lambda^{d+k} e^{-C_4\epsilon_0\Lambda}
\right).    
\]
By choosing $\epsilon_j = \Lambda^{-\frac{d+1/2-j+k}{d+1-j+k}}$,
and $\delta_j=\frac{(d-j+k+1)}{C_4}\frac{\log\Lambda}{\Lambda}$,
we get
\begin{equation*}
\begin{split}
I_j
&
\leq
C^*n\Lambda^{d+k-2}e^{-\Lambda}\\
&\times
\left(
\Lambda^{-1/2}
+ 
\left(\frac{(d-j+k+1)}{C_4}\log\Lambda\right)^{d-j+k+1-\alpha_j}\Lambda^{\frac{-1/2}{d+1-j+k}-\alpha_j\left(\frac{d+1/2-j+k}{d+1-j+k}\right)}
+ \Lambda^{-1}
\right)
\\
&
=
O\left((\log(\Lambda))^{d-j+k+1}\Lambda^{-\frac{1/2}{d+1-j+k}}\right)
\\
&
=
O\left((\log\log n)^{d+k}(\log n)^{-\frac{1}{2(d+k)}}\right).
\end{split}    
\end{equation*}
Next, taking $\epsilon_0=\frac{d+k+1}{C_4}\frac{\log\Lambda}{\Lambda}$ yields 
\[
I_0 - \E\{\Ncrit{d}\}^2
=
O((\log\Lambda)^{d+k+1}\Lambda^{-1})=
o\left((\log\log n)^{d+k}(\log n)^{-\frac{1}{2(d+k)}}\right),
\]
which concludes the proof.    
\end{proof}

\section*{Acknowledgement}
YR was partially supported by the Israel Science Foundation, grant 2539/17. OB was partially supported by the Israel Science Foundation grant 1965/19, by the EPSRC grants EP/Y008642/1, and EP/Y028872/1, and by the Leverhulme Trust grant RPG-2023-144. Part of this work was done while OB was at the Technion -- Israel Institute of Technology.
\bibliographystyle{plain}
\bibliography{general}
\appendix

\section{Spherical volumes}
The following spherical volumes lemmas play an important role in calculating bounds in the proof of Proposition \ref{prop:gen_exp_var_Fkr_gen}. 

\begin{lem}[Lemma B.2 in \cite{bobrowski2022homological}]\label{lem:vdiff}
Let $x_1,x_2\in \R^d$ be such that $\abs{x_1-x_2}= \eps>0$, and let $\alpha\in(0,1)$. Define
\[
\vdiff(\eps,\alpha):= \vol(B_{1-\alpha\eps}(x_2){\setminus} B_1(x_1) ).
\]
Then,
\[
\vdiff(\eps,\alpha)  \ge \eps \frac{\omega_{d-1}}{d+1} (1-\alpha^2)^{\frac{d+1}{2}} + o(\eps),
\]
and in particular,
\[
\lim_{\eps\to0}\frac{\vdiff(\eps,\alpha)}{\eps} \in (0,\infty).
\]

\end{lem}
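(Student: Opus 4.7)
The plan is to set up coordinates, reduce the problem to a difference of two spherical cap volumes cut by a common hyperplane, Taylor expand to first order in $\epsilon$, and extract the stated bound via an elementary pointwise inequality. Translate and rotate so that $x_1 = 0$ and $x_2 = \epsilon e_1$, and write $A := B_{1-\alpha\epsilon}(x_2)$, $B := B_1(x_1)$. Solving $|y|^2 = 1$ and $|y - \epsilon e_1|^2 = (1-\alpha\epsilon)^2$ simultaneously locates $\partial A \cap \partial B$ in the hyperplane $\{y_1 = h\}$ with $h := \alpha + \frac{\epsilon(1-\alpha^2)}{2}$. Two elementary checks then establish the clean identity
\[
\vol(\vdiff) = \vol(A \cap H) - \vol(B \cap H), \qquad H := \{y : y_1 \geq h\}.
\]
First, any $y \in \vdiff = A \setminus B$ satisfies $y_1 > h$, because rearranging $|y|^2 \geq 1$ and $|y-\epsilon e_1|^2 \leq (1-\alpha\epsilon)^2$ gives $2\epsilon(y_1-\alpha) \geq \epsilon^2(1-\alpha^2)$. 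Second, symmetrically, $B \cap H \subseteq A$: if $|y|^2 \leq 1$ and $y_1 \geq h$ then $|y - \epsilon e_1|^2 \leq 1 - 2\epsilon h + \epsilon^2 = (1-\alpha\epsilon)^2$.

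Next, I will evaluate both caps using the standard identity $\vol(B_r(0) \cap \{y_1 \geq z\}) = \omega_{d-1} r^d F(z/r)$ with $F(a) := \int_a^1 (1-u^2)^{(d-1)/2}\,du$. After centering $A$ at the origin, the normalized height for $A \cap H$ is $s_A := (h-\epsilon)/(1-\alpha\epsilon) = \alpha - \frac{\epsilon(1-\alpha^2)}{2} + O(\epsilon^2)$, while for $B \cap H$ it is $s_B := h = \alpha + \frac{\epsilon(1-\alpha^2)}{2}$. Taylor expanding with $F'(a) = -(1-a^2)^{(d-1)/2}$ and $(1-\alpha\epsilon)^d = 1 - d\alpha\epsilon + O(\epsilon^2)$, and using that the first-order shifts in $s_A$ and $s_B$ have opposite signs (so the two linear-in-$\epsilon$ $F$-contributions add rather than cancel), I obtain
\[
\vol(\vdiff) = \omega_{d-1}\,\epsilon\,\bigl[(1-\alpha^2)^{(d+1)/2} - d\alpha F(\alpha)\bigr] + O(\epsilon^2).
\]

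The final step is the elementary inequality $\alpha F(\alpha) \leq \frac{(1-\alpha^2)^{(d+1)/2}}{d+1}$, which follows from the pointwise bound $\alpha \leq u$ on $[\alpha,1]$ together with the exact antiderivative $\int u(1-u^2)^{(d-1)/2}du = -\frac{(1-u^2)^{(d+1)/2}}{d+1}$: indeed $\alpha F(\alpha) = \int_\alpha^1 \alpha(1-u^2)^{(d-1)/2}du \leq \int_\alpha^1 u(1-u^2)^{(d-1)/2}du = \frac{(1-\alpha^2)^{(d+1)/2}}{d+1}$. Substituting back yields
\[
\vol(\vdiff) \geq \omega_{d-1}\,\epsilon\cdot\frac{(1-\alpha^2)^{(d+1)/2}}{d+1} + O(\epsilon^2),
\]
which is the stated inequality; the positivity of the leading coefficient for $\alpha \in (0,1)$ gives $\lim_{\epsilon\to 0}\vdiff(\epsilon,\alpha)/\epsilon \in (0,\infty)$. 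The main obstacle is purely bookkeeping in the Taylor expansion — in particular verifying that the $O(\epsilon^2)$ correction to $s_A = (h-\epsilon)/(1-\alpha\epsilon)$ contributes only at $O(\epsilon^2)$ to $F(s_A)$, which is automatic since $F$ is smooth at $\alpha \in (0,1)$ (we remain strictly inside the open interval of integration).
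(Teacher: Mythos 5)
Your argument is correct and self-contained. The paper itself does not reproduce the proof of this lemma (it is imported verbatim as Lemma B.2 of \cite{bobrowski2022homological}), so there is no in-text proof to compare against; but your route is the natural one and every step checks out. Specifically: the hyperplane $\{y_1 = h\}$ with $h = \alpha + \tfrac{\eps(1-\alpha^2)}{2}$ is correct, the two inclusions $A\setminus B \subset H$ and $B\cap H\subset A$ both follow from the one-line algebraic rearrangement you display (and together they give the nesting $B\cap H\subset A\cap H$ needed to turn the set difference into a difference of cap volumes), the cap-volume formula and the expansions $s_A = \alpha - \tfrac{\eps(1-\alpha^2)}{2}+O(\eps^2)$, $s_B = \alpha+\tfrac{\eps(1-\alpha^2)}{2}$ are right, and the Taylor expansion combined with $(1-\alpha\eps)^d = 1-d\alpha\eps+O(\eps^2)$ yields the exact first-order coefficient $\omega_{d-1}\bigl[(1-\alpha^2)^{(d+1)/2}-d\alpha F(\alpha)\bigr]$ as you state. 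The final pointwise bound $\alpha F(\alpha)\le\tfrac{(1-\alpha^2)^{(d+1)/2}}{d+1}$ via $\alpha\le u$ on $[\alpha,1]$ and the exact antiderivative is clean and gives both the stated lower bound and strict positivity of the limit; finiteness is immediate since $F$ is bounded on $(0,1)$. A minor stylistic remark: you actually establish the sharper statement $\lim_{\eps\to 0}\vdiff(\eps,\alpha)/\eps = \omega_{d-1}\bigl[(1-\alpha^2)^{(d+1)/2}-d\alpha F(\alpha)\bigr]$, from which both claims of the lemma are deduced — worth flagging explicitly since it is strictly stronger than the inequality being asked for.
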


\begin{lem}[Lemma B.3 in \cite{bobrowski2022homological}]
\label{lem:vol_r1_r2}
Let $\vint(r_1,r_2,\Delta)$ be the volume of the intersection of balls with radii $r_1,r_2$ and whose centers are at distance $\Delta$ apart. Assuming that $\Delta^2 \ge |r_1^2-r_2^2|$, then there exists $D_\mathrm{int}>0$ such that
\[
	\vint(r_1,r_2,\Delta) \le \frac{\omega_d}{2}(r_1^d+ r_2^d) - D_\mathrm{int}\Delta(r_1^{d-1}+r_2^{d-1}).
\]
\end{lem}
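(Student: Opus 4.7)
The plan is to set the problem up geometrically, reduce the intersection volume to a sum of spherical-cap volumes, and then extract the linear-in-$\Delta$ deficit from an explicit one-dimensional integral.

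First I would assume without loss of generality that $r_1 \ge r_2$, and also that $\Delta < r_1 + r_2$ (otherwise $\vint=0$ and the bound is trivial). Place the centers on the $t$-axis at distance $\Delta$ apart. The boundaries $\partial B_{r_1}, \partial B_{r_2}$ meet in a $(d-2)$-sphere lying in a hyperplane $H$ orthogonal to the centers' axis. Writing $h_1, h_2$ for the distances from $c_1, c_2$ to $H$, one has $h_1+h_2=\Delta$ and $r_1^2-h_1^2 = r_2^2-h_2^2$, which gives
\[
h_1 = \frac{\Delta}{2} + \frac{r_1^2-r_2^2}{2\Delta}, \qquad h_2 = \frac{\Delta}{2} - \frac{r_1^2-r_2^2}{2\Delta}.
\]
The hypothesis $\Delta^2 \ge |r_1^2-r_2^2|$ is precisely what guarantees $h_2\ge 0$, so that $H$ separates the two centers and the intersection decomposes as a sum of two spherical caps. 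Each cap, written as a half-ball minus a slab, gives
\[
\vint(r_1,r_2,\Delta) = \frac{\omega_d}{2}(r_1^d+r_2^d) - \sum_{i=1}^{2}\int_{0}^{h_i}\omega_{d-1}(r_i^2 - t^2)^{(d-1)/2}\,dt.
\]

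Next I would establish the lower bound $\int_{0}^{h_i}\omega_{d-1}(r_i^2-t^2)^{(d-1)/2}\,dt \ge C\, h_i\, r_i^{d-1}$ for an absolute constant $C>0$, by splitting on whether $h_i \le r_i/2$ or $h_i > r_i/2$. In the first case, $(r_i^2 - t^2)^{(d-1)/2} \ge (3r_i^2/4)^{(d-1)/2}$ on the whole range, so the integral is at least $\omega_{d-1}(3/4)^{(d-1)/2} h_i r_i^{d-1}$. In the second case, one restricts the integration to $[0, r_i/2]$ and uses $h_i \le r_i$ to convert $r_i^d$ back into $h_i r_i^{d-1}$, losing only a factor of $1/2$. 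One ends up with $C = \tfrac{1}{2}\omega_{d-1}(3/4)^{(d-1)/2}$.

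Finally, since $r_1\ge r_2$ yields $h_1 \ge \Delta/2$ and also $r_1^{d-1} \ge \tfrac{1}{2}(r_1^{d-1}+r_2^{d-1})$, we get
\[
h_1 r_1^{d-1} + h_2 r_2^{d-1} \;\ge\; h_1 r_1^{d-1} \;\ge\; \frac{\Delta}{2}\, r_1^{d-1} \;\ge\; \frac{\Delta}{4}\bigl(r_1^{d-1}+r_2^{d-1}\bigr),
\]
which, combined with the cap decomposition and the integral bound, yields the claim with $D_{\mathrm{int}} = C/4$.

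The main obstacle is that one of the heights $h_2$ may be arbitrarily small (indeed it vanishes when $\Delta^2 = r_1^2-r_2^2$), so a naive symmetric argument that distributes $\Delta$ evenly between the two caps fails. The key observation that saves the argument is that $h_1$ always absorbs at least half of $\Delta$, so the entire deficit can be charged to the larger cap while still producing a bound symmetric in $r_1,r_2$. The secondary technical point is handling the regime $h_i$ close to $r_i$ where the integrand decays to zero near the upper endpoint; the case split at $r_i/2$ is the cleanest way to sidestep this.
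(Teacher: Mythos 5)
Your core argument is correct and it does prove the inequality in the regime where the balls overlap. The hyperplane $H$ through $\partial B_{r_1}\cap\partial B_{r_2}$ is pinned down by $h_1+h_2=\Delta$, $r_1^2-h_1^2=r_2^2-h_2^2$, and the hypothesis $\Delta^2\ge|r_1^2-r_2^2|$ is exactly $h_2\ge 0$, so the intersection is a union of two opposing caps. Writing each cap as half-ball minus slab gives $\vint=\frac{\omega_d}{2}(r_1^d+r_2^d)-\sum_i\int_0^{h_i}\omega_{d-1}(r_i^2-t^2)^{(d-1)/2}\,dt$, and your case split at $h_i\lessgtr r_i/2$ cleanly yields $\int_0^{h_i}\omega_{d-1}(r_i^2-t^2)^{(d-1)/2}\,dt\ge C h_i r_i^{d-1}$ with $C=\tfrac12\omega_{d-1}(3/4)^{(d-1)/2}$. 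The observation that $h_1\ge\Delta/2$ and $r_1^{d-1}\ge\tfrac12(r_1^{d-1}+r_2^{d-1})$ when $r_1\ge r_2$ is precisely what rescues the estimate when $h_2$ degenerates, and it gives $D_{\mathrm{int}}=C/4$. This is a natural and essentially elementary route; since the lemma is cited from \cite{bobrowski2022homological} and not re-proved here I cannot compare to a proof in this paper.

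The one concrete flaw is the opening remark that $\Delta\ge r_1+r_2$ is ``trivial because $\vint=0$.'' It is not: when $\vint=0$ the asserted bound reads
\[
0\le\frac{\omega_d}{2}(r_1^d+r_2^d)-D_{\mathrm{int}}\,\Delta\,(r_1^{d-1}+r_2^{d-1}),
\]
and the right-hand side goes to $-\infty$ as $\Delta\to\infty$, so no fixed $D_{\mathrm{int}}>0$ makes it hold (take $r_1=r_2=r$ and $\Delta=kr$: this forces $D_{\mathrm{int}}\le\omega_d/(2k)$). The lemma therefore needs a standing hypothesis $\Delta\le r_1+r_2$, which should be stated rather than waved away. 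This is harmless for the application in this paper, since the lemma is only invoked after the balls are shown to intersect, but your proof should restrict to $\Delta\le r_1+r_2$ explicitly instead of claiming the complementary case is covered.
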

\section{Blaschke-Petkantschin-
type formulae}

In a nutshell, the Blaschke-Petkantschin (BP) formula allows us to evaluate integrals on point configurations  using spherical coordinates on their minimal circumspheres. Recalling our definitions for critical points of the $k$-NN distance function \eqref{eq:crit_ball} this is highly useful for our analysis.
The adaptations of the BP formulae we use are taken from \cite{bobrowski2022homological}, which was based on \cite{edelsbrunner_nikitenko_reitzner_2017}. While originally described for point configurations in $\R^d$, it was shown in \cite{bobrowski2022homological}, that for configurations with diameter smaller than $2\rconv$, the same formulae apply for the torus $\T^d$. 

Let $\bx=(x_1,\ldots,x_{m})\subset(\mathbb{R}^d)^{m}$. 
Assuming the points in $\bx$ are in general position, they lie on a unique 
$(m-2)$-dimensional sphere $S(\bx)$ which itself lies in an $(m-1)$-dimensional hyperplane, denoted $\Pi(\mathbf{x})$. 
The center and radius of $S(\bx)$ are denoted $c(\bx)$ and $\rho(\bx)$, respectively.
Finally, we denote $\bs{\theta}(\mathbf{x})\subset \mathbb{S}^{m-2}$ the spherical coordinates of the points in $\bx$ on $S(\bx)$. We are interested in the bijective transformation $\mathbf{x}\rightarrow (c,\rho,\Pi,\bs{\theta})$.

In the context of this paper, we will be integrating over functions $f:(\mathbb{R}^d)^{m}\rightarrow\R$ that are affine invariant, so that,
\begin{equation}\label{eq:aff_inv}
f(\mathbf{x})=f(c+\rho\bs{\theta}(\Pi)) = f(\rho\bs{\theta}(\Pi_0)) := f(\rho\bs{\theta}),  
\end{equation}
where $\Pi_0$ is the canonical embedding of $\R^{m-1}$ in $\R^d$ as $\R^{m-1}\times\{0\}^k$. In other words, $f$ is independent of $c$ and $\Pi$, but may depend on $\rho$ and $\bs\theta$. Further, we are interested in coordinates in $\T^d$ rather than $\R^d$. However, since our analysis is limited to small neighborhoods, which are locally Euclidean, we can carry on similar calculations. We will implicitly assume that $f(\bx)=0$ whenever the diameter of $\bx$ is more than $2\rconv$.

\begin{lem}[Lemma C.1 in \cite{bobrowski2022homological}]\label{lem:bp}
Let $f:(\mathbb{T}^d)^{m}\rightarrow\R$ be a measurable bounded function
satisfying \eqref{eq:aff_inv}. Then,
\begin{equation*}
\int_{(\T^d)^{m}}f(\bx)d\bx = 
D_{\mathrm{bp}}\int_{0}^{\infty}\int_{(\mathbb{S}^{m-2})^{m}}\rho^{d(m-1)-1}
f(\rho\bs{\theta})(V_{\mathrm{simp}}(\bs{\theta}))^{d-m+2}d\bs{\theta}d\rho,
\end{equation*}
where $V_{\mathrm{simp}}(\bs{\theta})$ is the volume of the $(m-1)$-simplex spanned by $\bs{\theta}$, $D_{\mathrm{bp}}=((m-1)!)^{d-m+2}\Gamma_{d,m-1}$, and $\Gamma_{d,m-1}$ is the volume of the Grassmannian $\mathrm{Gr}(m-1,d)$.

In the case where $f$ is only linear invariant, namely $f(\bx)=f(c+\rho\bth)$, then
\begin{equation*}
\int_{(\T^d)^{m}}f(\bx)d\bx = 
D_{\mathrm{bp}}\int_{\T^d}\int_{0}^{\infty}\int_{(\mathbb{S}^{m-2})^{m}}\rho^{d(m-1)-1}
f(c+\rho\bs{\theta})(V_{\mathrm{simp}}(\bs{\theta}))^{d-m+2}d\bs{\theta}d\rho.
\end{equation*}
\end{lem}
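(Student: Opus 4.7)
The plan is to reduce the torus statement to the classical Euclidean Blaschke--Petkantschin change-of-variables formula, exploiting the implicit convention that the integrand vanishes on configurations with $\diam(\bx)>2\rconv$.

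First, I would localize the integration to configurations of small diameter. For any $\bx\in(\T^d)^m$ with $\diam(\bx)\le 2\rconv$, I fix a representative of $x_1$ in the fundamental domain $[0,1)^d$ and lift each remaining $x_i$ to the unique Euclidean representative at toroidal distance from $x_1$. The resulting map to an open neighborhood in $(\R^d)^m$ is an isometry with unit Jacobian, and together with translation across $\T^d$ (the position of $x_1$) it covers the torus integral without overlap. Since on this lifted neighborhood both the minimal circumsphere $S(\bx)$ and the simplex $\sigma(\bx)$ coincide with their Euclidean counterparts, the integrand transforms consistently.

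Second, on $(\R^d)^m$ I would apply the standard change of variables $\bx\mapsto(\Pi_0,z,c,\rho,\bs\theta)$, where $\Pi_0\in\Gr(m-1,d)$ is the linear subspace parallel to the affine hull of $\bx$, $z\in\Pi_0^\perp$ is the offset of that affine plane, $c\in\Pi_0$ is the in-plane circumcenter, $\rho=\rho(\bx)$ is the circumradius, and $\bs\theta_i=(x_i-c)/\rho\in\mathbb S^{m-2}$ are the spherical coordinates inside the plane. A direct Jacobian computation (as in Theorem~5 of \cite{edelsbrunner_nikitenko_reitzner_2017}) yields
\[
d\bx=((m-1)!)^{d-m+2}\,\rho^{d(m-1)-1}\,V_{\mathrm{simp}}(\bs\theta)^{d-m+2}\,d\bs\theta\,d\rho\,dc\,dz\,d\Pi_0,
\]
where $d\Pi_0$ is the invariant measure on $\Gr(m-1,d)$ with total mass $\Gamma_{d,m-1}$.

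Third, I would invoke the invariance hypothesis \eqref{eq:aff_inv} to integrate out the variables on which $f$ does not depend. In the affine-invariant case, $f$ sees neither $\Pi_0$ nor the position of the plane (neither $z$ nor the in-plane location of $c$), so the integrals over $\Gr(m-1,d)$ and over $\R^d\cong\Pi_0\oplus\Pi_0^\perp$ contribute $\Gamma_{d,m-1}$ and $\vol(\T^d)=1$ respectively, producing the first identity with $D_{\mathrm{bp}}=((m-1)!)^{d-m+2}\Gamma_{d,m-1}$. In the linear-invariant case, $f$ still depends on $c$ through the torus translation, so only the Grassmannian integral is carried out, yielding the second identity with $c$ surviving as an explicit integration variable over $\T^d$. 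The main obstacle is the bookkeeping of the Jacobian on the Grassmannian and the sphere; apart from this standard calculation, the only subtlety is checking that the lift from $\T^d$ to $\R^d$ is compatible with the circumcenter assignment, and this is guaranteed by $\rho(\bx)<\rconv$, which places both $\bx$ and $S(\bx)$ inside a single convex Euclidean chart.
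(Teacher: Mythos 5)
The paper does not actually prove this lemma: it is imported verbatim from Lemma~C.1 of \cite{bobrowski2022homological}, and the appendix merely remarks that the Euclidean BP formula of \cite{edelsbrunner_nikitenko_reitzner_2017} extends to $\T^d$ once configurations are restricted to diameter $<2\rconv$. So there is no in-paper proof to compare against. Your sketch is the standard argument one would give (lift to a convex Euclidean chart, apply the Euclidean Blaschke--Petkantschin change of variables, integrate out the Grassmannian and the translational degrees of freedom that $f$ does not see), and the Jacobian factor you quote, $((m-1)!)^{d-m+2}\rho^{d(m-1)-1}V_{\mathrm{simp}}(\bs\theta)^{d-m+2}$, matches the constant $D_{\mathrm{bp}}$.

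Two small points worth tightening. First, you assert that the implicit cutoff $\diam(\bx)\le 2\rconv$ ``guarantees'' $\rho(\bx)<\rconv$, but these are not equivalent: a nearly degenerate $m$-tuple can have small diameter while its circumsphere has arbitrarily large radius and a far-away center, in which case the circumcenter need not land in the same convex chart as $\bx$. What rescues the argument in this paper is that every $f$ to which the lemma is applied already carries an indicator $\ind\{\rho(\bx)\le\rmax\}$ via $h_r$, so the additional restriction $\rho\le\rconv$ can be assumed without loss; you should state that explicitly rather than deduce it from the diameter bound. Second, in your parametrization the variable $c\in\Pi_0$ is the in-plane circumcenter, whereas the $c\in\T^d$ appearing in the linear-invariant version of the lemma is the full $d$-dimensional center; the surviving integration variable is thus $c+z$ in your notation, and it is this combined variable that ranges over $\T^d$. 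These are bookkeeping issues, not logical ones, and the rest of the sketch is a faithful route to the cited result.
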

The following lemma presents a partial form of the BP formula. Let $\bx = (x_1,\ldots, x_{k+1}) \in (\T^d)^{k+1}$, and define $\bx_0 = (x_1,\ldots,x_{m+1})$, with its complement $\hat\bx_0 =\bx{\setminus}\bx_0 = (x_{m+2},\ldots,x_{k+1})$. In the following formula, the variables of $\bx_0$ remain fixed, while those in $\hat{\bx}$ undergo a BP-style change of variables. Fix $\bx_0$ and
denote $c_0 = c(\bx_0), \rho_0 = \rho(\bx_0)$. 
Under the assumptions in Section \ref{sec:prelims}, $c(\bx)$ and $\rho(\bx)$ are well-defined such that $\bx_0$ lies on a smaller sphere than that of $\bx$, and therefore 
$c(\bx_0)$ and $\rho(\bx_0)$ are well-defined as well. 

Next, we consider  $\bx$ in the coordinate system $\R^d_{c_0}$, where the origin is set at $c_0$.
Notice that $\bx_0$ spans a $m$-dimensional  plane $\Pi(\bx_0)\subset \R^d_{c_0}$. In that case, the center $c(\bx)$ must lie on the $(d-m)$-dimensional plane orthogonal to $\Pi(\bx_0)$ denoted $\Pi^{^\perp}(\bx_0)$. Once the center $c(\bx)$ is set, in order to determine the $k$-dimensional plane $\Pi(\bx)$, we choose a $\Pi_0^{^\perp}\in \Gr(k-m-1,d-m-1)$, so that $\Pi(\bx) = \Pi(\bx_0 \cup {c}) \oplus \Pi_0^{^\perp} \cong \R^k$. Notice that once we chose $\bx_0$ and $c(\bx)$ then $\rho = \rho(\bx)$ is determined by $\rho = \sqrt{\rho^2_0 + |c-c_0|^2}$. Therefore, in order to determine the location of the points $\hat\bx_0$ all that is remained is to choose their spherical coordinates $\hat\bth_0 \in (\mathbb{S}^{k-1})^{k-m}$. Overall, we obtained a change of variables $\bx \to (\bx_0, c, \Pi_0^{^\perp}, \hat\bth_0)$ which leads to the following statement.

\begin{lem}[Lemma C.3 in \cite{bobrowski2022homological}]\label{lem:bp_partial}
Let $f:(\T^d)^{k+1}\to \R$, be a bounded measurable function. Suppose that $0\le m \le k-1$, then
\[
\int_{\bx} f(\bx)d\bx = \param{\frac{k!}{m!}}^{d-k+1}\int_{\bx_0}d\bx_0
\int_c dc\int_{\Pi_0^{^\perp}} d\Pi_0^{^\perp}\int_{\hat\bth_0} d\hat\bth_0 \frac{\rho^{(d-1)(k-m)}}{|c-c_0|^{(d-k)}}
\param{\frac{\vsimp(\bth)}{\vsimp(\bth_0)}}^{d-k+1}f(\bx),
\]
where $\bx_0 = (x_1,\ldots, x_{m+1})$, $c\in \Pi^{^\perp}(\bx_0) \cong \R^{d-m}$,  $\Pi_0^{^\perp} \in \Gr(k-m-1,d-m-1)$, and $\hat\bth_0 \in (\mathbb{S}^{k-1})^{k-m}$.
In addition, $\bth = (\theta_1,\ldots, \theta_{k+1}) =  \theta(\bx) \in (\mathbb{S}^{k-1})^{k+1}$, $\bth_0 = (\theta_1,\ldots, \theta_{m+1})$, $c_0 = c(\bx_0)$, and $\rho = \rho(\bx) =  \sqrt{\rho^2_0 + |c-c_0|^2}$.
\end{lem}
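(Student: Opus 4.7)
The plan is to derive this partial Blaschke-Petkantschin formula by specializing the full BP formula (Lemma \ref{lem:bp}) so that $\bx_0$ is held fixed and only $\hat\bx_0 = \bx \setminus \bx_0$ is transformed. I would first describe the geometric change of variables $\hat\bx_0 \mapsto (c,\Pi_0^{\perp},\hat\bth_0)$, then compute its Jacobian by comparing two applications of the full BP formula (one to $\bx$ and one to $\bx_0$) and reading off the ratio.

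Geometrically, the constraint $\bx_0 \subset \bx$ forces $S(\bx)$ to pass through $\bx_0$, so the center $c$ of $S(\bx)$ must lie in the affine flat $c_0 + \Pi^{\perp}(\bx_0)$ and, by Pythagoras, $\rho = \sqrt{\rho_0^2 + |c - c_0|^2}$. The ambient $k$-plane $\Pi(\bx)$ is then determined by $\Pi(\bx_0) \oplus \mathrm{span}(c - c_0)$ together with a complementary subspace $\Pi_0^{\perp} \in \Gr(k - m - 1, d - m - 1)$ in the orthogonal complement of that $(m+1)$-plane. Finally, the remaining $k - m$ points $\hat\bx_0$ are placed on $S(\bx)$ via their spherical coordinates $\hat\bth_0 \in (\mathbb{S}^{k-1})^{k-m}$. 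This produces a bijection $\hat\bx_0 \leftrightarrow (c,\Pi_0^{\perp},\hat\bth_0)$ off a measure-zero set where $c = c_0$.

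The Jacobian is obtained by matching two applications of Lemma \ref{lem:bp}. Applied to $\bx$, it contributes $\rho^{dk-1}\,\vsimp(\bth)^{d-k+1}$ together with the volume of $\Gr(k, d)$; applied to $\bx_0$, it contributes an analogous factor in $\rho_0$ together with $\vsimp$ of $\bx_0$'s intrinsic spherical coordinates and the volume of $\Gr(m, d)$. Taking the ratio (since the fixed $\bx_0$ accounts for the $\bx_0$-BP part while the desired change of variables accounts for the complement) leaves three groups of factors: (i) the substitution $\rho\,d\rho = |c - c_0|\,d|c - c_0|$ combined with the radial measure $|c - c_0|^{d - m - 1}$ on $\Pi^{\perp}(\bx_0)$; (ii) the spherical placement of $\hat\bx_0$ contributing $\rho^{(d-1)(k-m)}$; (iii) the Grassmannian fibration $\Gr(k, d) \to \Gr(m, d)$ with fibre $\Gr(k - m - 1, d - m - 1)$, which yields the ratio $(\vsimp(\bth)/\vsimp(\bth_0))^{d - k + 1}$ and the combinatorial prefactor $(k!/m!)^{d - k + 1}$. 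Consolidating powers of $\rho$ and $|c - c_0|$ produces the stated denominator $|c - c_0|^{d - k}$.

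The main obstacle is the last group: aligning the Grassmannian invariant measures with the two parametrizations of $\bx_0$ (once as points on $S(\bx_0)$, and once as points on the larger sphere $S(\bx)$), and verifying that the corresponding measure factorization through the flag structure produces exactly the $(\vsimp(\bth)/\vsimp(\bth_0))^{d - k + 1}$ ratio. A clean route is to test the identity against product integrands $f(\bx) = \phi(\bx_0)\,g(\bx)$, apply Lemma \ref{lem:bp} on both sides, and match the resulting expressions term by term in the simplex-volume variables; density then extends to general bounded measurable $f$. This reduces the claim to an algebraic identity between invariant measures on flag manifolds, requiring no ideas beyond those already used in the proof of the full BP formula.
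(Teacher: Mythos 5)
This lemma is imported verbatim from \cite{bobrowski2022homological}, where it appears as Lemma~C.3; the present paper states it without proof, so there is no internal argument to compare against. What follows is an assessment of your blind attempt on its own merits.

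Your geometric picture is correct: fixing $\bx_0$ pins the circumsphere $S(\bx)$ to pass through $\bx_0$, so $c$ is constrained to the flat $c_0 + \Pi^{\perp}(\bx_0)$, the radius is determined by $\rho = \sqrt{\rho_0^2 + |c-c_0|^2}$, and the remaining freedom is a Grassmannian direction $\Pi_0^{\perp}\in\Gr(k-m-1,d-m-1)$ together with the angular placements $\hat\bth_0$ of the new points. The dimensions also check out, since $d(k-m) = (d-m) + (k-m-1)(d-k) + (k-1)(k-m)$. So the change of variables you describe is the right one, and you correctly identify the flag-measure factorization and the simplex-volume ratio as the crux.

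The gap is in how you propose to compute the Jacobian. ``Matching two applications of Lemma~\ref{lem:bp}'' --- one to $\bx$, one to $\bx_0$ --- and ``taking the ratio'' is not a valid argument, because the two BP coordinate systems $(c,\rho,\Pi,\bth)$ for $\bx$ and $(c_0,\rho_0,\Pi_0,\bs{\vartheta}_0)$ for $\bx_0$ are not nested in a way that makes a formal quotient meaningful. To compare them you would need the Jacobian of the transfer map from the refined coordinates $(c_0,\rho_0,\Pi_0,\bs{\vartheta}_0,c,\Pi_0^{\perp},\hat\bth_0)$ to the coarse coordinates $(c,\rho,\Pi,\bth)$, and that transfer Jacobian is exactly as hard as the partial BP Jacobian itself: it encodes the scaling relation $\rho_0^m\vsimp(\bs{\vartheta}_0)=\rho^m\vsimp(\bth_0)$ between simplex volumes measured on the two nested spheres, together with the factorization of the $O(d)$-invariant measure over the flag $\Gr(m,d)\times\Gr(k-m-1,d-m-1)\to\Gr(k,d)$. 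Your proposed ``clean route'' --- test against $f(\bx)=\phi(\bx_0)g(\bx)$, apply Lemma~\ref{lem:bp} on both sides, and match --- is circular: the right-hand side of the identity is the claim under proof, so after applying full BP to both sides you are left with precisely the same flag-manifold computation, not a trivial matching.

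In short, the geometric setup and the identification of the obstacle are correct, but the mechanism you propose for bypassing a direct Jacobian computation does not bypass it. An actual proof, following the method of Edelsbrunner, Nikitenko and Reitzner (which is what \cite{bobrowski2022homological} builds on), must carry out the nested flag integration explicitly, including the simplex-volume conversion between $S(\bx_0)$ and $S(\bx)$. That step cannot be formally extracted as a ratio of two applications of Lemma~\ref{lem:bp}.
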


We conclude this section with a simple calculus statement that 
is extensively used in the proofs above.
\begin{lem} \label{lem:gamma}
Let $0\leq s\leq t$, $m\in\mathbb{N}$, and $\alpha,\beta>0$. Then,
\begin{equation*}
    \int_{s}^{t} \rho^\alpha e^{-\beta\rho^{m}}d\rho =    \frac{1}{m \beta^{(\alpha+1)/m}} \param{\Gamma((\alpha+1)/m, \beta s^m) - \Gamma((\alpha+1)/m, \beta t^m)}
,
\end{equation*}
where $\Gamma(p,t) = \int_t^\infty z^{p-1}e^{-z}dz $ is the upper incomplete gamma function.
\end{lem}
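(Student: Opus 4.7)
The statement is a pure calculus identity, so the plan is to reduce it to the definition of the upper incomplete gamma function by a single substitution.

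First I would substitute $u = \beta \rho^m$, which gives $du = m\beta\rho^{m-1}\,d\rho$, or equivalently $d\rho = \frac{u^{1/m - 1}}{m\beta^{1/m}}\,du$ after using $\rho = (u/\beta)^{1/m}$. The factor $\rho^\alpha$ transforms to $u^{\alpha/m}\beta^{-\alpha/m}$, and the exponential becomes $e^{-u}$. Collecting the powers of $u$ gives $u^{\alpha/m + 1/m - 1} = u^{(\alpha+1)/m - 1}$, with an overall constant $\frac{1}{m\beta^{(\alpha+1)/m}}$. The limits of integration become $\beta s^m$ and $\beta t^m$.

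Second, I would invoke the definition $\Gamma(p,t) = \int_t^\infty z^{p-1}e^{-z}\,dz$ to write
\[
\int_{\beta s^m}^{\beta t^m} u^{(\alpha+1)/m - 1}e^{-u}\,du = \Gamma((\alpha+1)/m,\,\beta s^m) - \Gamma((\alpha+1)/m,\,\beta t^m),
\]
since the finite integral equals the difference of the two tail integrals (both are finite because the integrand decays exponentially). Combining the two displays yields the claim.

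There is no real obstacle here: both steps are routine, and the only small point to verify is that the substitution is valid, which holds because $u \mapsto (u/\beta)^{1/m}$ is a smooth bijection of $(0,\infty)$ onto itself when $\beta > 0$ and $m \in \mathbb{N}$. The case $s = 0$ is handled by noting $\Gamma((\alpha+1)/m, 0) = \Gamma((\alpha+1)/m)$, which matches the usual unbounded integral $\int_0^t \rho^\alpha e^{-\beta\rho^m}\,d\rho$.
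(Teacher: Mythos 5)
Your proof is correct and follows exactly the paper's own argument: the substitution $u = \beta\rho^m$ (the paper calls it $z$) reducing the integral to the difference of two upper incomplete gamma functions. The extra remarks on well-definedness, finiteness, and the $s=0$ case are fine but not needed beyond what the paper records.
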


\begin{proof}
We take the  change of variables $z=\beta\rho^m$,
\begin{equation*}
    \begin{split}
    \int_{s}^{t} \rho^\alpha e^{-\beta\rho^{m}}d\rho
    & 
    =
    \frac{1}{m\beta^{(\alpha+1)/m}}\int_{\beta s^m}^{\beta t^m} z^{(\alpha+1)/m-1} e^{-z}d\rho
    \\
    &
    =
    \frac{1}{m \beta^{(\alpha+1)/m}} \param{\Gamma((\alpha+1)/m, \beta s^m) - \Gamma((\alpha+1)/m, \beta t^m)}.
    \end{split}
\end{equation*}
\end{proof}

\section{Mecke's formula} 
The following theorem is a result of Palm theory for Poisson point processes.
\begin{thm}[Mecke's formula] \label{th:Palm}
Let $(X,\rho)$ be a metric space, let $f:X\rightarrow\mathbb{R}$ be a probability density on $X$, and let $\cP_n$ be a random Poisson process on $X$ with intensity $\lambda_n=nf$. Let $h(\cX,\cP_n)$ be a measurable function defined for all finite subsets $\cX\subset\cP_n\subset X$ with $|\cX|=k$. Then
\begin{equation} \label{eq:Mecke}
\E\left\{ \sum\limits_{\substack{\cY\subset \cP_n \\ |\cX|=k}}h(\cX,\cP_n) \right\}=\dfrac{n^k}{k!}\E\{h(\cX',\cX'\cup\cP_n)\},
\end{equation}
where the sum is over all subsets $\cX\subset\cP_n$ of size $|\cX|=k$, and $\cX'$ is a set of $k$ i.i.d.~random variables in $X$ with density $f$, independent of $\cP_n$.
\end{thm}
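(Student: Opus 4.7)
The plan is to establish Mecke's formula by reducing to the one-point case ($k=1$, Slivnyak's theorem) and iterating. The only probabilistic input needed is the \emph{spatial independence} of a Poisson point process: for any measurable partition $X = B \sqcup B^c$, the restrictions $\cP_n\cap B$ and $\cP_n\cap B^c$ are independent Poisson processes with intensities $\lambda_n|_B$ and $\lambda_n|_{B^c}$ respectively. Together with the intensity identity $\E\{|\cP_n\cap A|\} = n\int_A f(x)dx$, this suffices.

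First I would verify the case $k=1$ when $h$ does not depend on $\cP_n$ (Campbell's formula):
\[
\E\left\{\sum_{x\in\cP_n} g(x)\right\} = \int_X g(x)\,\lambda_n(dx) = n\,\E\{g(X')\},
\]
which is immediate from the definition of the intensity measure together with a monotone-class extension from indicators to measurable $g$. Next, for the full $k=1$ case, write $h(x,\cP) = g(x)\mathbf{1}_A(\cP)$ and exploit spatial independence: heuristically, the contribution of $x\in\cP_n$ to the sum depends on $\cP_n\setminus\{x\}$, which given $x\in\cP_n$ is distributed as an independent copy of $\cP_n$. I would make this rigorous by checking the identity for exponential test functions $h(x,\cP) = g(x)\exp\bigl(-\sum_{y\in\cP}v(y)\bigr)$, where both sides reduce to elementary integrals against the Poisson Laplace functional $\exp\bigl(-\int(1-e^{-v})d\lambda_n\bigr)$, and then extending to general bounded measurable $h$ by a monotone class argument.

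For general $k$, proceed by induction. Rewrite the sum over unordered $k$-subsets as
\[
\sum_{\substack{\cX\subset\cP_n \\ |\cX|=k}} h(\cX,\cP_n) \;=\; \frac{1}{k}\sum_{x\in\cP_n}\sum_{\substack{\cY\subset\cP_n\setminus\{x\} \\ |\cY|=k-1}} h(\cY\cup\{x\},\cP_n),
\]
apply the $k=1$ case to the outer sum (treating the inner sum as a functional of $x$ and $\cP_n$), which produces an independent point $X'$ with density $f$ and replaces $\cP_n$ by $\{X'\}\cup\cP_n$ everywhere inside. Then apply the inductive hypothesis to the inner $(k-1)$-subset sum to generate $k-1$ additional i.i.d.\ points and a factor $n^{k-1}/(k-1)!$; combining with the outer factor $n/k$ yields the claimed $n^k/k!$ and the correct joint law $\cX'\cup\cP_n$.

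The main obstacle is the Palm manipulation at $k=1$, i.e.~the fact that adding or removing a single point from a Poisson process preserves the distribution of the remainder. The Laplace-functional route circumvents this by reducing everything to a direct computation on product functions; the alternative, working with Palm distributions explicitly, requires more measure-theoretic machinery but yields the same conclusion. Beyond that, the induction step and the combinatorial bookkeeping translating between unordered $k$-subsets and ordered $k$-tuples are routine.
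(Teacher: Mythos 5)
The paper does not actually prove this statement---it simply cites Penrose's book \emph{Random Geometric Graphs} for the proof---so there is nothing in the paper to compare against line by line. Your sketch is sound but takes a different route from the standard textbook argument. The proof in Penrose exploits the mixed-binomial representation of a Poisson process: condition on the number of points $N_n \sim \mathrm{Poisson}(n)$, note that given $N_n=m$ the points are $m$ i.i.d.\ draws from $f$, use exchangeability to replace the sum over $\binom{m}{k}$ subsets by a single term, and then resum the Poisson weights $e^{-n}n^m/m!\cdot\binom{m}{k}=\tfrac{n^k}{k!}\cdot\tfrac{e^{-n}n^{m-k}}{(m-k)!}$ so that the leftover $m-k$ points reconstitute an independent Poisson process. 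This is completely elementary and handles all $k$ at once. Your route instead establishes the $k=1$ case (Slivnyak/Palm) via Laplace functionals plus a monotone-class extension, and then bootstraps to general $k$ by splitting off a distinguished point with the overcounting factor $1/k$ and invoking the inductive hypothesis conditionally on that point; the bookkeeping $\tfrac{1}{k}\cdot n\cdot\tfrac{n^{k-1}}{(k-1)!}=\tfrac{n^k}{k!}$ is correct, and the joint law of $\cX'\cup\cP_n$ comes out right provided you take the inner $(k-1)$ points independent of the outer one and of $\cP_n$, which you should state explicitly. The Laplace-functional approach is less elementary but has the virtue of not relying on the specific mixed-binomial construction, so it transfers directly to Poisson processes given abstractly by their intensity measure on a general Borel space. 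One minor caution: your monotone-class step gives the $k=1$ identity only for bounded measurable $h$, yet the inner sum you feed into it in the induction is typically unbounded; you should close this by first treating nonnegative $h$ via monotone convergence on both sides and then passing to signed integrable $h$, which is routine but worth a sentence.
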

For a proof of Theorem \ref{th:Palm} see \cite{penrose_random_2003}.

\begin{thm}[Corollary A.2 in \cite{bobrowski2022homological}]\label{cor:palm}
Let $\abs{\cX_1} = \abs{\cX_2} = k$,
\[
\mean{\sum_{ \substack {
                    \cX_1 ,\cX_2\subset \cP_n  \\
                    \abs{\cX_1 \cap \cX_2} = j }}
    h(\cX_1,\cP_n)h(\cX_2,\cP_n)} = {\frac{n^{2k-j}}{j!((k-j)!)^2}} \mean{h(\cX_1',\cX' \cup \cP_n)h(\cX_2',\cX' \cup \cP_n)},
\]
where $\cX_1',\cX'_2$ are sets of $k$ points with $\abs{\cX_1'\cap\cX_2'} = j$, such that  $\cX' := \cX'_1 \cup \cX'_2$ is a set of $2k-j$ iid points in $X$ with density $f$, independent of $\cP_n$.
\end{thm}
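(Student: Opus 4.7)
The goal is to reduce this pair-indexed sum to a single application of the one-set Mecke formula (Theorem \ref{th:Palm}), applied to subsets of size $2k-j$. The key observation is that every ordered pair $(\cX_1,\cX_2)$ of $k$-subsets of $\cP_n$ with $|\cX_1\cap\cX_2|=j$ is uniquely encoded by its union $\cZ:=\cX_1\cup\cX_2$, which has cardinality $2k-j$, together with an ordered labeling that specifies which $j$ points of $\cZ$ form the intersection $\cY=\cX_1\cap\cX_2$ and which two disjoint groups of $k-j$ points form the symmetric differences $\cX_1\setminus\cY$ and $\cX_2\setminus\cY$.

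The number of such labelings of a fixed $(2k-j)$-point set is the multinomial coefficient
\[
\binom{2k-j}{j,\; k-j,\; k-j}\;=\;\frac{(2k-j)!}{j!\,((k-j)!)^2}.
\]
Using this, I first rewrite the pair sum as a single sum over $\cZ\subset \cP_n$ of size $2k-j$ of the function
\[
F(\cZ,\cP_n)\;:=\;\sum_{\text{labelings of }\cZ} h(\cX_1,\cP_n)\,h(\cX_2,\cP_n),
\]
and then apply Theorem \ref{th:Palm} to $F$ (with $k$ replaced by $2k-j$). This produces the prefactor $n^{2k-j}/(2k-j)!$ and replaces $\cZ$ by a set $\cZ'$ of $2k-j$ i.i.d.~points with density $f$, independent of $\cP_n$. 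Because the points of $\cZ'$ are exchangeable, every labeling contributes the same expectation, so the inner sum over labelings collapses to the multinomial count times $\E\{h(\cX_1',\cX'\cup\cP_n)\,h(\cX_2',\cX'\cup\cP_n)\}$ for any fixed canonical choice of labeling $\cX'=\cX_1'\cup\cX_2'$ with $|\cX_1'\cap\cX_2'|=j$. Cancelling $(2k-j)!$ in numerator and denominator yields exactly the prefactor $n^{2k-j}/(j!\,((k-j)!)^2)$ claimed in the statement.

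The main subtlety, more bookkeeping than difficulty, is to treat the outer sum as being over ordered pairs of distinguishable subsets, so that swapping $\cX_1\setminus\cY$ with $\cX_2\setminus\cY$ produces a genuinely different term, and to check that the multinomial count (rather than, say, half of it) is the correct one. Once the ordering conventions are fixed and the decomposition $\cZ=\cY\sqcup(\cX_1\setminus\cY)\sqcup(\cX_2\setminus\cY)$ is in place, the remainder is a direct invocation of Theorem \ref{th:Palm} combined with exchangeability of i.i.d.~samples, so no new probabilistic input beyond the standard Mecke formula is needed.
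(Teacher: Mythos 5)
Your argument is correct and is the standard derivation of this form of the multivariate Mecke identity: encode each ordered pair $(\cX_1,\cX_2)$ with $|\cX_1\cap\cX_2|=j$ by its union $\cZ$ of size $2k-j$ together with a three-way labeling, apply Theorem~\ref{th:Palm} at size $2k-j$, then use exchangeability of the i.i.d.~points of $\cZ'$ to collapse the inner sum over labelings into the multinomial count $\frac{(2k-j)!}{j!((k-j)!)^2}$, which cancels the $(2k-j)!$ from Mecke's formula. Since the paper itself does not reproduce a proof but only cites Corollary~A.2 of the reference, there is nothing to contrast against; your bookkeeping (ordered pairs, hence the full multinomial rather than half of it; the $j=k$ case reducing to a single labeling; $h$ depending only on the underlying set so that exchangeability applies) is exactly the right set of checks, and the result follows.
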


\end{document}